\newcommand{\TODO}[1][]{{\color{red}TODO\ifthenelse{\equal{#1}{}}{}{: #1}}}
\newcommand{\removed}[1] {\ifmmode{\color{red}\cancel{#1}}\else{\color{red}\sout{#1}}\fi}
\newcommand{\e}{\mathrm{e}}
\newcommand{\scurl}[1][]{\textup{curl}\ifthenelse{\equal{#1}{}}{}{_#1}}
\newcommand{\vcurl}[1][]{\textup{\textbf{curl}}\ifthenelse{\equal{#1}{}}{}{_#1}}
\newcommand{\jump}[1]{\left[\hspace{-0.025in}\left[#1\right]\hspace{-0.025in}\right]}
\newcommand{\sdiv}[1][]{\textup{div}\ifthenelse{\equal{#1}{}}{}{_#1}}
\newcommand{\vdiv}[1][]{\textup{\textbf{div}}\ifthenelse{\equal{#1}{}}{}{_#1}}
\newcommand{\dx}[1][x]{\,\mathrm{d}#1}
\def\cT{\mathcal{T}}
\newcommand{\norm}[1]{\| #1 \|}
\newcommand{\Tnorm}[1]{|||#1|||}
\newcommand{\seminorm}[1]{| #1 |}
\begin{document}

\title{Implicit-explicit Crank-Nicolson scheme for Oseen's equation at high Reynolds number}

\author{Erik Burman\thanks{Department of Mathematics, University College London, London, UK--WC1E 6BT, UK, 
 (e.burman@ucl.ac.uk, d.garg@ucl.ac.uk). } 
\and Deepika Garg\footnotemark[1]
\and  Johnny Guzman\thanks{Division of Applied Mathematics, Brown University, Providence, RI, USA,
(johnny\_guzman@brown.edu). }
}



\maketitle

\begin{abstract}
In this paper we continue the work on implicit-explicit (IMEX) time discretizations for the incompressible Oseen equations that we started in \cite{BGG23} (E. Burman, D. Garg, J. Guzm\`an, {\emph{Implicit-explicit time discretization for Oseen's equation at high Reynolds number with application to fractional step methods}}, SIAM J. Numer. Anal., 61,  2859--2886, 2023). The pressure velocity coupling and the viscous terms are treated implicitly, while the convection term is treated explicitly using extrapolation. Herein we focus on the implicit-explicit Crank-Nicolson method for time discretization. For the discretization in space we consider finite element methods with stabilization on the gradient jumps. The stabilizing terms ensures inf-sup stability for equal order interpolation and robustness at high Reynolds number. Under suitable Courant conditions we prove stability of the implicit-explicit Crank-Nicolson scheme in this regime. The stabilization allows us to prove error estimates of order $O(h^{k+\frac12} + \tau^2)$. Here $h$ is the mesh parameter, $k$ the polynomial order and $\tau$ the time step. Finally we discuss some fractional step methods that are implied by the IMEX scheme. Numerical examples are reported comparing the different methods when applied to the Navier-Stokes' equations.
\end{abstract}

\begin{keywords}
Oseen's equations, stabilized finite elements, fractional-step methods, error estimates, high Reynolds number
\end{keywords}

\begin{AMS}
76D07, 65N30
\end{AMS}

\section{Introduction}
As a model problem for Navier-Stokes' equations we will consider the transient Oseen's equation.
Let $\Omega \subset \mathbb{R}^d$, $d=2,3$ be an open polygonal
domain. Let $I:= (0,T)$ and denote the space time domain by $\mathcal{M} =
\Omega \times I$. 
Now, consider the functional spaces ${V}=\{{v} \in [{H}^{1}_{0}(\Omega)]^{d}\}$, and $Q = {L}^{2}_{0}(\Omega)= \left\{q \in {L}^{2}(\Omega) | \ \int_{\Omega}{q}\dx = 0 \  \right\}$.
Assuming sufficient smoothness of the solution, the fluid equations that we consider may be written,
\begin{alignat}{2}\label{eq:convdiff}
\partial_t u + \beta \cdot \nabla u + \nabla p- \mu \Delta u = & f \quad && \mbox{ in }
                                                        \mathcal{M}, \\
\nabla \cdot u = & 0 \quad && \mbox{ in }
                                                        \mathcal{M}, \\
u(\cdot, 0) =& u_0 \quad && \mbox{ in }
                                                        \Omega, \\
u =& 0 \quad && \mbox{ on } \partial \Omega \times I.
\end{alignat}
Here $f \in [L^2(\Omega)]^d$, $u_0 \in V$, with $\nabla \cdot
u_0 = 0$, $\beta \in [W^{1,\infty}(\mathcal{M})]^d$, with $\nabla \cdot \beta=0$, $\beta \cdot n|_{\partial \Omega}=0$. Let $\beta_{\infty}=\|\beta\|_{[L^\infty(\mathcal{M})]^d}$, where $\mathcal{M}$ is a space time domain and $\beta$ is the flow velocity, since we are interested in high Reynolds number $\beta_\infty>0$. We define $\beta_{1,\infty} = \|\beta\|_{[W^{1,\infty}(\mathcal{M})]^d}$. This is a parabolic
problem and it is known to admit a unique solution $u$ in
$[L^2(0,T;H^1_0(\Omega)) \cap L^\infty(0,T;L^2(\Omega))]^d$. We wish to discretize this system using stabilized finite elements in space and an implicit-explicit (IMEX) time discretization. In such a scheme the viscous terms together with the velocity pressure coupling are treated implicitly and the convection terms and associated stabilization terms are treated explicitly, with extrapolation. This has the advantage that no nonlinear system has to be solved and the velocity stabilization does not change the Galerkin stencil. Indeed for each time step a standard Stokes' problem has to be solved, for which there exists, efficient preconditioners \cite{ESW}.
The application of IMEX scheme's for transient Navier-Stokes' equations in the high Reynolds regime appears to be very popular in applications. There are typically two approaches available in the literature. Either second order backward differentiation (BDF2) is used for the time discretization \cite{MCBS22,FWK18}, or a Runge-Kutta type approach is used \cite{LMS23, CCRL17}. The properties of time discretization schemes are known to depend strongly on the mesh Reynolds number $Re:=\frac{h {\beta_{\infty}}  }{\mu}$. Here $h$ the mesh size, and $\mu$ the viscosity parameter.
There seems to be few works in the literature where IMEX scheme's for Oseen's equation are analysed in the case where $Re$ is high,
In our previous work \cite{BGG23} we proved stability and error estimates for the IMEX BDF2 method in the high $Re$ limit, under standard hyperbolic CFL for affine approximation or a slightly stronger so-called 4/3-CFL condition $(\tau \leq Co_{4/3} h^{\frac43})$ if $\tau$ is the time discretization parameter) for polynomial orders greater than one. Herein, we will consider the IMEX Crank-Nicolson scheme for time discretization (i.e. using Adams-Bashforth 2 for the convective term) and prove similar results in this case.  

For the Oseen's equations, the standard Galerkin finite element space discretization is known to have poor stability properties in the high Reynolds regime and therefore stabilized methods need to be considered. It is known that stabilizations such as Galerkin-Least squares, or Streamline Upwind Petrov Galerkin \cite{FF92} couples to the time discretization in a nontrivial way, prompting space-time finite element solutions. So called symmetric stabilizations \cite{BB06,BFH006, Cod08} on the other hand typically extend the matrix stencil, making the computational solution of the time steps more costly. In this paper we consider the symmetric stabilization where a penalty is applied to the jumps of the gradients over element faces, known as Continuous Interior Penalty \cite{DD76, BH04}, however it is straightforward to extend most of the results to other symmetric stabilization methods or discontinuous Galerkin methods.

There exists a vast literature on IMEX schemes for the Navier-Stokes' equations with a wide range of different stability conditions (see the introduction of \cite{he2007stability} for an overview). We will focus on the Crank-Nicolson case considered in this paper here and refer to \cite{BGG23} for a more general overview. The IMEX Crank-Nicolson scheme was first considered by Kim and Moin in a fractional step method \cite{kim1985application} it was analysed by Marion and Temam \cite{marion1998navier} and then by several authors, see for example \cite{he2007stability,tone2004error}. Recently the approach has been extended to other fluid models such as $\alpha$-models \cite{MRTT16}, deconvolution methods \cite{KMR14} and a Burgers' type model problem for compressible flow \cite{ZJH18}.
In all these works the viscosity is assumed to be strictly positive, and the mesh Reynolds number small, with the admissible time step going to zero as the viscosity goes to zero. To the best of our knowledge there is no numerical analysis of the IMEX Crank-Nicolson scheme available in the literature with results that are robust for arbitrarily high Reynolds number.

Herein we will only consider stability and error analysis for the case when the mesh Reynolds number is high, $Re\ge 1$. For low Reynolds number flow discretized using IMEX Crank-Nicolson we refer to the papers cited above. Note that the analysis for BDF2 with extrapolated convection does not carry over in to the present case, since the BDF2 scheme has some built in dissipation that is useful to control the extrapolation error. Since no such explicit dissipative mechanism is available for the Crank-Nicolson scheme, substantial effort in the present work goes into proving the corresponding a priori bounds through other means (Lemma \ref{lem:time_diss} and Lemma \ref{lemmaY_1}). In this context, the case of linearized incompressible flow includes several challenges not present in the scalar case, handled in \cite{BG22}, related to the pressure velocity coupling and associated stabilization. Some auxiliary results carry over from our previous work under only minor modifications and for those cases we only state the results without proof below.

\subsection{Outline of the paper} In the next section we introduce the weak formulation of the model problem and the semi-discretization in space. Then in Section \ref{sec:full_disc} we introduce the Crank-Nicolson scheme with extrapolated convection (Adams--Bashforth 2) and collect some technical results. In Section \ref{sec:stab_error} we propose a stability and error analysis for the given scheme in the high Reynolds regime. In the case of inviscid flow problems we then recall from \cite[Section 6]{BGG23}, in Section \ref{sec:split}, that the implicit-explicit method naturally can be written as a split method based on Poisson pressure projection steps, without any splitting error and give the formulation in the Crank-Nicolson case. This splitting scheme is similar to the inviscid version of the split IMEX Crank-Nicolson method introduced in  \cite{johnston2004accurate} for viscous flow. We then discuss how to make a consistent splitting scheme for all Reynolds numbers. The stability of this generalization is explored numerically. Finally in Section \ref{sec:numeric} we report some numerical results or the imex method and the splitting scheme. Some additional numerical experiments are provided in the supplementary material.
\section{Weak formulation and space semi-discretization}\label{sec:space_semi_disc}
We will consider the standard notation for Sobolev spaces ${{H}}^{ m}(\Omega)$, with norm $\norm{\cdot}_{ H^{m}(\Omega)}$, $m \geq 0$.  The scalar product in ${L}^{2}(\Omega)$ is denoted by $(\cdot,\cdot)_{\Omega}$ and its norm by $ \norm{u}$.  The notations $[L^2(\Omega)]^{d}$ and $[{ H}^{1}(\Omega)]^{d}$ respectively abbreviate the vector-valued versions of $L^2(\Omega)$ and ${ H}^{1}(\Omega)$; ${ H}_{0}^{1}(\Omega)$ is a subspace of ${H}^{1}(\Omega)$ with zero trace functions.
We define the forms, for $X \subset \mathbb{R}^d$,
\[
(u,v)_X :=\int_X u \cdot v ~ \mbox{d}x, \quad
F(v):=\int_\Omega f \cdot v ~\mbox{d}x,
\]
\[
c(w,v) :=\int_\Omega (\beta \cdot \nabla) w \cdot v ~\mbox{d}x,\quad a(w,v)
:= \int_\Omega \mu \nabla w : 
\nabla v~\mbox{d}x,
\]
and
$
b(q,w) := -\int_\Omega q \nabla \cdot w ~\mbox{d}x.
$
   The system \eqref{eq:convdiff} may then be cast on the weak form,
\begin{subequations}
\begin{alignat}{2}\label{eq111}
(\partial_t u, v)_\Omega + c(u,v)  + b(p,v) + a(u,v) =& F(v), \quad && \forall v \in
V, \, t>0,  \\
b(q,u) = & 0  , \quad && \forall q \in
Q, \, t>0. \label{eq113}
\end{alignat}
\end{subequations}
We will use the following two norms $\norm{\cdot}^2=(\cdot, \cdot)_{\Omega}$ and $\norm{v}_{\infty}= \sup_{x \in \bar \Omega}|v(x)|$.
Let $\{\mathcal{T}_h\}_h$ be a family of affine, simplicial, quasi uniform meshes of $\Omega$. The mesh parameter is defined by $h:= \max_{T \in \mathcal{T}_h} \mbox{diam}(T)$ and we assume $h<1$. We assume that the meshes are kept
fixed in time and, for simplicity, the family $\{\mathcal{T}_h\}_h$ is supposed to be quasi-uniform. Mesh faces
are collected in the set $\mathcal{F}_h$ which is split into the set of interior faces, $\mathcal{F}^{int}_h$, and of boundary
faces, $\mathcal{F}^{ext}_h$. For a smooth enough function $v$ that is possibly double-valued at $F \in \mathcal{F}^{int}_h$ with $F=\partial{T}^{-} \cap \partial{T}^{+}$, we define its jump at $F$ as $\jump{v}=:v_{T^{-}}-v_{T^{+}}$, and we fix the unit normal vector to $F$, denoted by $\nu_F$, as pointing from $T^-$ to $T^+$. The arbitrariness in the sign of  $\jump{v}$ is irrelevant in what follows.

We consider continuous finite elements with equal-order to discretize in space the velocity
and the pressure. Let $k \geq 1$ be an integer.
We will also make use of the piece-wise polynomial space	
\[
X_h^k := \left\{v \in {L}^{2}(\Omega) \ : \ v|_T \in \mathcal{P}_k(T) ~~ \forall ~ T \in \cT_{h}   \right\}.
\]
and set
\[
W_h := X_h^k \cap C^0(\bar \Omega),
\]
with $\mathcal{P}_k(T)$ spanned by the restriction to $T$ of polynomials of total degree $\leq$ $k$. Set \[ V_h=[W_h]^d , \ \ Q_h= W_h \cap L^{2}_0(\Omega).\]
Let $\pi_h$ the $L^2$-orthogonal projection onto $W_h$ given by
$
    (\pi_h w, w_h)_{\Omega}=(w, w_h)_{\Omega}$, $\forall w_h \in W_h.   
$

We let $\pi_0:L^2(\Omega) \rightarrow X_h^0$ be the $L^2$-orthogonal projection:
$
    (\pi_0 w, v_h)_{\Omega}=(w, v_h)_{\Omega},$ $\forall v_h \in X_h^0.
$

The space semi-discretized scheme we propose reads on abstract form:
For all $t \in (0,T)$, find $(u_h(t),p_h(t)) \in V_h \times Q_h$ such that
\begin{align}\label{eq:scheme_semi}
\partial_t u_h + C_h u_h+A_hu_h+G_h p_h 
&= F_h, \nonumber\\
G_h^*u_h+S^p_h p_h &=0, 
\end{align}
{where $F_h \in V_h$ satisfies $(F_h, v_h)_\Omega=F(v_h), \forall v_h \in V_h$.}
The
 discrete operators introduced in (\ref{eq:scheme_semi}) are defined as follows.
For $w \in X_h^k + [H^{\frac32+\epsilon}(\mathcal{T})]^d$ we define $C_h(t) w \in V_h$ by
\begin{alignat}{2}\label{eq:Cdef}
(C_h(t) w,v_h )_{\Omega} :=& \sum_{T \in \mathcal{T}_h} (\beta(t) \cdot \nabla w, v_h)_{T}+ s_u(w,v_h)  \quad & \forall v_h \in V_h, 
\end{alignat}
where $s_u(\cdot, \cdot )$ is the stabilization operator defined as (see for example \cite{BH04,BFH006, BE07})
\begin{equation}\label{eq:stab_form}
s_u(w_h,v_h) := \gamma_u\sum_{F \in \mathcal{F}^{int}_h} \int_F h_F^2 (|\beta \cdot n|+\beta_\infty\varepsilon^\perp )
\jump{\nabla w_h} : \jump{\nabla v_h} ~\mbox{d}s+ (\beta_\infty w_h\cdot n, v_h \cdot n)_{\partial
  \Omega}, 
\end{equation}
with $\gamma_u \ge 0$, $\varepsilon^\perp>0$. The presence of $\varepsilon^\perp$ avoids the need of any separate grad-div stabilization term thanks to the added crosswind stabilization. { In practice it is typically chosen small (in the computations below $\varepsilon^\perp =O(10^{-2}|\beta|)$.} We define the semi-norm $|v|_{s_u}^2:= s_u(v, v)$ and note that using integration by parts that
\begin{equation}\label{Chsemi}
(C_h(t) v_h,v_h )_{\Omega}=  |v_h|_{s_u}^2 \quad \forall v_h \in V_h.
\end{equation}
To be precise the semi-norm $|\cdot |_{s_u}$ depends on time $t$ through $\beta$.  
We will use $C_h^n$ to denote $C_h(t^n)$. 
For $w \in X_h^k + [H^{\frac32+\epsilon}(\mathcal{T})]^d$ we define $A_h w \in V_h$ by
\begin{multline*}
    (A_h w,v_h)
    := a(w,v_h)- (\mu n \cdot \nabla  w,v_h)_{\partial \Omega}-(\mu n \cdot \nabla v_h , w)_{\partial
  \Omega} + \frac{\gamma \mu}{h}( w,v_h)_{\partial
  \Omega}.
\end{multline*}

For $q \in Q_h + H^{1}(\Omega)$ we define $G_h q \in V_h$ by
\[
(G_h q,v_h)_{\Omega}
:= - (q,\nabla \cdot v_h)_{\Omega} + (q,v_h \cdot n)_{\partial
  \Omega}  \quad \forall v_h \in V_h,
\]
similarly we define the adjoint $G^*_h w \in W_h$
\[
(G^*_h w,q_h)_{\Omega} :=  (q_h,\nabla \cdot w)_{\Omega} - (q_h,w \cdot n)_{\partial
  \Omega} \quad \forall q_h \in W_h.
\]
The following relationship easily follows
\begin{equation}\label{Gh}
(G^*_h v_h,q_h)_{\Omega} = -(v_h, G_h q_h)_{\Omega} \quad  \forall v_h \in V_h, \ q_h \in W_h. 
\end{equation}

For $q \in Q_h + H^{\frac32+\epsilon}(\mathcal{T})$, 
Let $
(S^p_h q,q_h)_{\Omega} = s_p(q,q_h), \forall q_h \in Q_h,
$
where
\begin{align} \label{pres_stab}
s_p(q,q_h) := \frac{\gamma_p\xi h^3}{\mu} \sum_{F \in \mathcal{F}^{int}_h} \int_F 
\jump{\nabla q} \cdot \jump{\nabla q_h} ~\mbox{d}s,
\end{align}
$\xi = \min\{1,Re^{-1}\}$.
We define the semi-norm $|q_h|^{2}_{s_p}:= (S^p_h q_h,q_h)_{\Omega}$. We define the energy norm as follows:
\begin{equation}\label{eq:energy}
\|v_h\|_E^2:= |v_h|_{s_u}^2+ \|v_h\|_{A}^2, \text{ where }  \|v_h\|_{A}^2 := (A_h v_h, v_h)_{\Omega}.
\end{equation}
This is indeed a norm on $V_h$ if $\gamma$ is sufficiently large.  To be precise the energy norm depends on time $t$ since $|\cdot |_{s_u}$ depends on the $t$ through $\beta$.  

For a linear operator $L$ defined for $V_h$ we define
\begin{equation*}
    \|L\|_{h} := \sup_{v_h \in V_h}\frac{L(v_h)}{(\|v_h\|_E^2 +\|v_h\|^2)^{\frac{1}{2}}}.
\end{equation*}
For a linear operator $\tilde{L}:Q_h \mapsto \mathbb{R}$, such that $\tilde{L}(q_h) = 0$ for all $q_h \in \mbox{ker} S^p_h$ we define, with $Q_{h,S} := Q_h \setminus \mbox{ker} S^p_h$
\begin{equation}\label{eq:L_div_cont}
    \|\tilde{L}\|_{\mathrm{div}} := \sup_{q_h \in Q_{h,S}}\frac{\tilde{L}(q_h)}{|q_h|_{s_p}}.
\end{equation}

Assuming sufficient regularity of the exact solution the above formulation is
strongly consistent. More generally we have the following result.
\begin{lemma} \label{consistency} (\em modified Galerkin orthogonality). 
Assume that $(u, p)$, the solution
of (\ref{eq:convdiff}), belongs to the space $[H^{\frac{3}{2}+\epsilon}(\Omega)]^{d+1}
$, with $\epsilon > 0$, and let $(u_h, p_h) \in V_h \times Q_h$ be the solution of (\ref{eq:scheme_semi}). Then
\begin{align*}
(\partial_t (u-u_h),v_h) &+ (C_h (u-u_h),v_h)+(A_h (u-u_h),v_h)
\\ &+(G_h (p-p_h), v_h )+(G_h^*(u-u_h), q_h)+(S^p_h (p-p_h), q_h) =0
\end{align*}
for all $(v_h, q_h) \in V_h \times Q_h$.
\end{lemma}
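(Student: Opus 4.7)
The strategy is the classical one for modified Galerkin orthogonality with symmetric stabilization and Nitsche-type weak boundary conditions: show that, thanks to the extra regularity, the exact solution satisfies the discrete scheme \eqref{eq:scheme_semi} when tested against arbitrary $(v_h,q_h)\in V_h\times Q_h$. Subtracting the discrete equation then yields the claim. So the whole argument reduces to verifying consistency of each of the six discrete operators on $(u,p)$.

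First I would handle the stabilization contributions. Because $u,p \in H^{\frac{3}{2}+\epsilon}(\Omega)$, the gradients $\nabla u$ and $\nabla p$ admit face traces in $L^2$ and these traces are single-valued across every interior face $F\in\mathcal F_h^{\mathrm{int}}$. Hence $\jump{\nabla u}|_F = 0$ and $\jump{\nabla p}|_F = 0$, which gives $s_u(u,v_h)=0$ and $s_p(p,q_h)=0$ for all admissible test functions. The Dirichlet boundary condition $u|_{\partial\Omega}=0$ makes the boundary contribution $(\beta_\infty u\cdot n, v_h\cdot n)_{\partial\Omega}$ in the definition of $s_u$ vanish as well.

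Next I would verify the consistency of each remaining discrete operator by undoing the integration by parts that defined it. Using $u|_{\partial\Omega}=0$ in the definition of $A_h$, we get $(A_h u, v_h)_\Omega = a(u,v_h) - (\mu n\cdot\nabla u, v_h)_{\partial\Omega}$, and integration by parts on $a(u,v_h)$ on $\Omega$ (note $u\in H^{\frac{3}{2}+\epsilon}$ is sufficient for this via a density argument) yields $(A_h u,v_h)_\Omega = -(\mu\Delta u, v_h)_\Omega$. Similarly $(G_h p, v_h)_\Omega = -(p,\nabla\cdot v_h)_\Omega + (p, v_h\cdot n)_{\partial\Omega} = (\nabla p, v_h)_\Omega$ and, by the same integration by parts on each element and the argument on stabilization, $(C_h u, v_h)_\Omega = c(u,v_h)$. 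For the divergence constraint, $(G_h^\ast u, q_h)_\Omega = (q_h,\nabla\cdot u)_\Omega - (q_h, u\cdot n)_{\partial\Omega} = 0$ using $\nabla\cdot u=0$ in $\Omega$ and $u=0$ on $\partial\Omega$. Combining these identities with the strong form \eqref{eq:convdiff} tested against $v_h$, and with $(f,v_h)_\Omega = F(v_h) = (F_h, v_h)_\Omega$, shows that $(u,p)$ satisfies
\begin{align*}
(\partial_t u, v_h)_\Omega + (C_h u, v_h)_\Omega + (A_h u, v_h)_\Omega + (G_h p, v_h)_\Omega &= (F_h, v_h)_\Omega, \\
(G_h^\ast u, q_h)_\Omega + (S_h^p p, q_h)_\Omega &= 0,
\end{align*}
for every $(v_h, q_h) \in V_h \times Q_h$.

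Finally I would subtract the discrete scheme \eqref{eq:scheme_semi} (tested against the same $(v_h, q_h)$) from these two identities. The right-hand sides cancel and, by linearity of each of $C_h$, $A_h$, $G_h$, $G_h^\ast$, $S_h^p$, and of the time derivative and pairing, we obtain the asserted equality on $u-u_h$ and $p-p_h$. There is no real obstacle here beyond bookkeeping; the only delicate point is justifying the face-trace and integration-by-parts manipulations under the limited $H^{\frac{3}{2}+\epsilon}$ regularity, which is standard (trace theorem on each element plus a density argument), and ensuring the boundary terms produced by $A_h$ and $G_h$ pair correctly with the Dirichlet condition $u=0$.
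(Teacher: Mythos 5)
Your proposal is correct and follows essentially the same route as the paper's own (much terser) proof: the paper likewise reduces the claim to the consistency of the Galerkin method with Nitsche boundary conditions together with the vanishing of $s_u(u,v_h)$ and $s_p(p,q_h)$ due to $\jump{\nabla u}_F=\jump{\nabla p}_F=0$ under the stated regularity. Your version simply spells out the integration-by-parts verifications for $A_h$, $G_h$, $G_h^*$ and $C_h$ that the paper leaves implicit.
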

\begin{proof}
This is an immediate consequence of the consistency of the standard Galerkin method with Nitsche weak imposition of the boundary conditions and the fact that, under the regularity assumptions, $s_u(u,v_h)=0$ and $s_p(p,q_h)=0$, since $\jump{\nabla u}_{F}=0$ and $\jump{\nabla p}_{F}=0$ for all interior faces $F$.
\end{proof}

\section{Fully discrete methods}\label{sec:full_disc}
Here we present the Crank-Nicolson time discretizations based on implicit treatment of the Stokes part and explicit treatment of the convection term, with extrapolation. 
We consider general right hand sides for both the momentum and the mass equations. This is helpful for the error analysis.  
\subsection*{Crank-Nicolson with extrapolated convection}
We let $\tau$ be the time step size. We define 
 the increment operator $\delta$ such that 
 \begin{align}\label{delta}
\delta v^{n+1}:= v^{n+1} - v^n.
\end{align}
We may then write the extrapolated
Crank-Nicolson scheme as follows.
Given $u^0$ and $u^1$, for $n=1, 2,\hdots, N-1$, find $(u_h^{n+1}, p_h^{n+1}) \in V_h \times Q_h$ such that
\begin{align}\label{eq:momentum_CN}
\tau^{-1} \delta u_h^{n+1} + G_h p_h^{n+1} +  C_h^{n+\frac{1}{2}} \hat u_h^{n+1}+ A_h \bar u_h^{n+1} &= {L_h^{n+1}}, \\G^*_h  u_h^{n+1} + S^p_h p_h^{n+1} &= \tilde{L}_h^{n+1}.\label{eq:mass_CN}
\end{align}
where $C^{n+\frac{1}{2}}_h=C_h(t^{n+\frac{1}{2}})$, $u_h^0 = \pi_h u^0$, $u_h^1 = \pi_h u^1$, $\hat u_h^{n+1} = \frac32 u_h^n -\frac12 u_h^{n-1}$ and $\bar u_h^{n+1}=
(u_h^{n+1}+u_h^{n})/2$. Note that the quantities approximate the exact solution at different time levels, indeed $u_h^{n} \approx u(t^{n})$, $\hat u_h^{n} \approx u(t^{n-\frac12})$, $p_h^n \approx p(t^{n-\frac12})$.
${L}_h^n$ and $\tilde{L}_h^n$ are defined by   
\begin{align*}
  ({L}_h^n, v_h)&={L}^n(v_h) \ \forall v_h \in v_h \mbox{ and } (\tilde{L}_h^n, q_h)=\tilde{L}^n(q_h) \ \forall q_h \in Q_h,
\end{align*} where $\{\tilde{L}^{n}\}$ and $\{\tilde{L}^{n}\}$ are bounded linear operators on $V_h$ and $Q_h$ respectively.
Also note that since $u_h^1$ is given we may define
$( G_h^*  u_h^{1},  q_h)_{\Omega}=\tilde{L}^1(q_h).$

In case the schemes are used for the approximation of \eqref{eq111}--\eqref{eq113} we take $L_h = F_h$ and $\tilde L = 0$.
In this case the scheme needs approximate velocities at two previous time levels, so that the first approximation will be of $u(t^{2})$. To simplify the exposition we assume that $u(t^1)$ with $\nabla \cdot u_1 = 0$ is known. In the Crank-Nicolson method the approximation of the time derivative is given by the scaled increment operator $\tau^{-1}\delta u^{n+1}$. The extrapolation is taken to the
time level $t^{n+\frac{1}{2}}$, in order to approximate the central difference in time that is the key feature of the Crank-Nicolson scheme.

\subsection{Technical Results} We will use the notation $C$ for a generic constant which may be different at different occasions, but is always independent of $h$ and $\mu$. For the analysis, we need  the following technical results. First we introduce the standard inverse and trace inequalities.
The family $\cT_h$ satisfies the following trace and inverse inequality
\begin{alignat}{2}\label{trace_h1}
    \norm{v}_{L^2(\partial T)} \leq & C(h^{-\frac{1}{2}}\norm{v}_{L^{2}(T)}+h^{\frac{1}{2}}\norm{\nabla v}_{L^{2}(T)}), \quad && v \in H^{1}(T),\\
    \label{inverse}
    \norm{\nabla v}_{L^{2}(T)} \leq & C h^{-1} \norm{v}_{L^{2}(T)}, \quad && v \in\mathcal{P}_k(T),\\
    \label{tracePk}
    \norm{v}_{L^2(\partial T)} \leq &  Ch^{-\frac{1}{2}}\norm{v}_{L^{2}(T)}, \quad && v \in \mathcal{P}_k(T),
\end{alignat}
where the constant $C$ is independent of $T \in \cT_h \ \text{and} \ h$.
One immediate consequence is the following estimate:
\begin{equation}\label{invA}
\|v_h\|_A \le C(1+\sqrt{\gamma}) \frac{\sqrt{\mu}}{{h}} \|v_h\| \quad \forall v_h \in V_h. 
\end{equation}

Also, using the above inverse estimates one has:
  \begin{align}\label{stab_bound}
     |v_h|_{s_u}^2 \leq  C \frac{\beta_\infty}{h} \norm{v_h}^2 \quad \forall v_h \in V_h.
  \end{align}

We also observe that
\begin{equation}\label{diff_stab}
\|\delta^m v^n\| \leq 2 \sum_{i=0}^{m} \|v^{n-i}\|,
\quad m=1,2, \ \text{and} \ 2 \leq n \leq N.
\end{equation}

We define
 $h_\beta := \frac{h}{\beta_{\infty}}$. We will also introduce
a weighted $L^2$-norm,
$
\|v\|_\beta := \|h_\beta^{\frac12} v\|_\Omega.
$
We recall a discrete interpolation property (see \cite[Lemma 2.1]{BG22} for a similar result):  for $w_h \in V_h$ there holds that
\begin{equation}\label{interpinq}
 \inf_{v_h \in  V_h} \| \beta \cdot \nabla w_h -  v_h\|_\beta \leq C( s_u(w_h,w_h)^{\frac12} + \beta_{1,\infty} h_{{\beta}}^{\frac12} \|w_h\|).
\end{equation}

For the divergence and the pressure similar discrete interpolation estimate hold, for the proof we refer to \cite[Corollary 3.2]{BFH006}; for $w_h \in V_h$ there holds that
\begin{equation}\label{interdiv}
 \inf_{v_h \in W_h}\|{  \nabla\cdot  {w}_h }- v_h \| \leq
 C (\varepsilon^\perp \beta_\infty h)^{-\frac12} s_u( w_h, w_h)^{\frac12},
\end{equation}
for $q_h \in Q_h$ there holds that
\begin{align*} 
    \inf_{v_h \in V_h}\| \nabla q_h - v_h\| \leq C
 \left(\frac{\mu}{ \xi h^2}\right)^{\frac12}  s_p(q_h,q_h)^{\frac12}. 
\end{align*}
In the large Reynolds regime, $Re>1$, it follows from the definition of $\xi$ that
\begin{align} \label{pres_inter}
   \inf_{v_h \in V_h}\| \nabla q_h - v_h\| \leq C
 h_\beta^{-\frac12} s_p(q_h,q_h)^{\frac12}.
\end{align}
We will need the following Courant number's 
\begin{equation*}
Co :=   (\beta_{\infty}+1) \frac{\tau}{h}, \quad
Co_{4/3} :=  \frac{\tau}{h_\beta^{\frac43}}.
\end{equation*}
Observe that $Co$ is a free parameter that can be made
as small as we like by making $\tau$ small relative to $h$. 

The following estimates are shown in the proof of \cite[Lemma 2.2]{BG22}.
\begin{lemma}
It holds,
\begin{alignat}{1} 
  \|C_h v_h\| \leq & C h_{\beta}^{-1} \|v_h\|,  {\mbox{ and hence } \tau \|C_h v_h\| \leq  C Co \|v_h\|}\label{estimate_Ch}, \\
  \tau \|C_h v_h\| \leq & C \tau^{1/4} Co_{4/3}^{\frac{3}{4}} \|v_h\|  \label{estimate_Ch2}. 
\end{alignat}
\end{lemma}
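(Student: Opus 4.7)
The plan is to bound $\|C_h v_h\|$ by testing with $C_h v_h$ itself, since $C_h v_h \in V_h$ by construction. Writing
\[
\|C_h v_h\|^2 = (C_h v_h, C_h v_h)_\Omega = \sum_{T \in \mathcal{T}_h}(\beta \cdot \nabla v_h, C_h v_h)_T + s_u(v_h, C_h v_h),
\]
we reduce the question to bounding the two terms on the right, each by a constant times $h_\beta^{-1}\|v_h\|\,\|C_h v_h\|$; dividing by $\|C_h v_h\|$ will then give \eqref{estimate_Ch}.

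For the convective term, the plan is a direct Cauchy--Schwarz estimate followed by the inverse inequality \eqref{inverse}:
\[
\sum_T (\beta \cdot \nabla v_h, C_h v_h)_T \le \beta_\infty \|\nabla v_h\|\,\|C_h v_h\| \le C\,\frac{\beta_\infty}{h}\|v_h\|\,\|C_h v_h\| = C\,h_\beta^{-1}\|v_h\|\,\|C_h v_h\|.
\]
For the stabilization term I would apply Cauchy--Schwarz on the seminorm $|\cdot|_{s_u}$ and then use \eqref{stab_bound} for \emph{both} arguments:
\[
s_u(v_h, C_h v_h) \le |v_h|_{s_u}\,|C_h v_h|_{s_u} \le C\,\frac{\beta_\infty}{h}\|v_h\|\,\|C_h v_h\| = C\,h_\beta^{-1}\|v_h\|\,\|C_h v_h\|.
\]
Summing and dividing by $\|C_h v_h\|$ produces $\|C_h v_h\| \le C h_\beta^{-1}\|v_h\|$. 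Multiplying by $\tau$ and using $\tau \beta_\infty/h \le (\beta_\infty+1)\tau/h = Co$ yields the second part of \eqref{estimate_Ch}.

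For \eqref{estimate_Ch2} I would simply rewrite the same bound. Since $h_\beta^{-1} = h_\beta^{-1/3}\cdot h_\beta^{-2/3}$ is not directly useful, the cleaner route is: $\tau h_\beta^{-1} = \tau^{1/4}\cdot \tau^{3/4} h_\beta^{-1} = \tau^{1/4}(\tau h_\beta^{-4/3})^{3/4} = \tau^{1/4} Co_{4/3}^{3/4}$, so \eqref{estimate_Ch2} follows immediately from \eqref{estimate_Ch}.

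The only mild obstacle is that the stabilization form $s_u$ includes both the streamline weight $|\beta \cdot n|$ and the crosswind weight $\beta_\infty \varepsilon^\perp$ together with a boundary contribution, so one must check that \eqref{stab_bound} is applied with a constant that truly absorbs all three components uniformly in $h$ and $\mu$; this is already built into \eqref{stab_bound} as stated, so no new ingredient is required. Everything else is straightforward Cauchy--Schwarz plus standard inverse estimates.
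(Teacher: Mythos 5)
Your argument is correct. The paper itself omits the proof, delegating it to the cited reference \cite{BG22}, and your derivation---testing the defining identity of $C_h v_h$ with $C_h v_h$ itself, then using Cauchy--Schwarz with the inverse inequality \eqref{inverse} for the convective part and \eqref{stab_bound} for both arguments of $s_u$, followed by the exponent bookkeeping $\tau h_\beta^{-1} = \tau^{1/4}(\tau h_\beta^{-4/3})^{3/4} = \tau^{1/4} Co_{4/3}^{3/4}$---is exactly the standard route that reference takes, so nothing further is needed.
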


Also, an immediate consequence of \eqref{invA} and \eqref{stab_bound} is the followoing. 
\begin{lemma} \label{enery_est_222}
It holds,
\begin{align} \label{enery_est}
\tau \|v_h\|_{E}^2 \leq  C_E\norm{v_h}^2 \ \text{for all} \ v_h \in V_h, 
\end{align}
where $C_E= C Co(\frac{1+\gamma}{Re}+1)$.
\end{lemma}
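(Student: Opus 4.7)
The plan is to bound the two pieces of the energy norm separately using the inverse-type estimates \eqref{invA} and \eqref{stab_bound} that were just recorded, and then recognize that the resulting factors assemble into $Co(\tfrac{1+\gamma}{Re}+1)$ up to a constant. No delicate argument should be needed: this is essentially a bookkeeping lemma collecting constants.

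First I would handle the convective stabilization term. Applying \eqref{stab_bound} and using $\beta_\infty \le \beta_\infty + 1$, we get
\begin{equation*}
\tau |v_h|_{s_u}^2 \le C \frac{\tau \beta_\infty}{h}\|v_h\|^2 \le C\,\frac{\tau(\beta_\infty+1)}{h}\|v_h\|^2 = C\, Co\, \|v_h\|^2.
\end{equation*}
Next for the viscous/Nitsche part, squaring \eqref{invA} gives
\begin{equation*}
\tau \|v_h\|_A^2 \le C(1+\gamma)\,\frac{\tau\mu}{h^2}\|v_h\|^2.
\end{equation*}
To bring in the mesh Reynolds number and the Courant number, rewrite $\tfrac{\tau\mu}{h^2} = \tfrac{\tau\beta_\infty}{h}\cdot\tfrac{\mu}{h\beta_\infty} = \tfrac{\tau\beta_\infty}{h}\cdot Re^{-1}$, and bound $\tau\beta_\infty/h \le Co$ as above, so that
\begin{equation*}
\tau \|v_h\|_A^2 \le C\,(1+\gamma)\,\frac{Co}{Re}\,\|v_h\|^2.
\end{equation*}

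Finally, summing the two estimates and using the definition \eqref{eq:energy} of the energy norm gives
\begin{equation*}
\tau \|v_h\|_E^2 = \tau |v_h|_{s_u}^2 + \tau\|v_h\|_A^2 \le C\, Co\,\Bigl(\tfrac{1+\gamma}{Re}+1\Bigr)\|v_h\|^2,
\end{equation*}
which is exactly \eqref{enery_est} with $C_E = C\,Co\bigl(\tfrac{1+\gamma}{Re}+1\bigr)$. There is no real obstacle here; the only thing to be careful about is being consistent with the definition of $Co$, which uses $\beta_\infty+1$ rather than $\beta_\infty$, so the bound $\tau\beta_\infty/h \le Co$ is used rather than an equality.
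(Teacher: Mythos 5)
Your proposal is correct and matches the paper's intent exactly: the paper states this lemma as ``an immediate consequence of \eqref{invA} and \eqref{stab_bound}'' without further detail, and your write-up simply fills in the bookkeeping (squaring \eqref{invA}, factoring $\tau\mu/h^2 = (\tau\beta_\infty/h)\,Re^{-1}$, and using $\beta_\infty \le \beta_\infty+1$ to reach $Co$). Nothing is missing.
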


We will also need the following summation by parts formula. 
\begin{lemma}\label{lem:sum_by_parts_spec}
There holds that
\begin{alignat*}{1}
\sum_{n=1}^{N-1}  (C_h^{n+\frac{1}{2}} \delta w_h^{n+1}, v_h^{n+1})_{\Omega} = & - \sum_{n=2}^{N-1}   (C_h^{n-\frac{1}{2}}  w_h^{n}, \delta v_h^{n+1})_{\Omega}  + (C_h^{N-\frac{1}{2}}w_h^{N}, v_h^{N})_{\Omega}  \\&- (C_h^{\frac{1}{2}}w_h^1, v_h^{2})_{\Omega}+\sum_{n=1}^{N-1} ( (C_h^{n-\frac{1}{2}}-C_h^{n+\frac{1}{2}})  w_h^n, v_h^{n+1})_{\Omega}.
\end{alignat*}
Moreover, with $\delta C_h^{n+\frac{1}{2}}:= C_h^{n+\frac{1}{2}}-C_h^{n-\frac{1}{2}}$
\begin{equation*}
 \sum_{n=1}^{N-1} ( \delta  C_h^{n+\frac{1}{2}}  w_h^n, v_h^{n+1})_{\Omega} \le    Co C_{\gamma,\beta}  \sum_{n=1}^{N} \| w_h^n\| \|v_h^n\|
\end{equation*}
where $C_{\gamma,\beta} := C (1+\gamma_u) \frac{\beta_{1,\infty}}{1+\beta_{\infty}}$.
\end{lemma}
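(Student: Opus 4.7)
My plan has two parts. The first identity is purely algebraic bookkeeping on discrete time sums; the second bound reduces to pointwise control of $\beta(t^{n+\frac12})-\beta(t^{n-\frac12})$ combined with standard discrete inverse estimates.

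For the identity, I would expand $\delta w_h^{n+1} = w_h^{n+1}-w_h^n$ and reindex the sum containing $w_h^{n+1}$ via the substitution $m=n+1$. After this step both resulting sums carry a time-indexed operator acting on $w_h^{\cdot}$, and the decomposition $-C_h^{n+\frac12} = (C_h^{n-\frac12} - C_h^{n+\frac12}) - C_h^{n-\frac12}$ isolates the $\delta C_h$-telescoping sum that appears as the last term in the claim. I would then peel off the $m=N$ endpoint from the shifted sum and the $n=1$ endpoint from the $C_h^{n-\frac12}$-sum, producing the two boundary contributions $(C_h^{N-\frac12} w_h^N, v_h^N)_{\Omega}$ and $-(C_h^{\frac12} w_h^1, v_h^2)_{\Omega}$; the remaining interior terms recombine into $-\sum_{n=2}^{N-1}(C_h^{n-\frac12} w_h^n, \delta v_h^{n+1})_{\Omega}$ after rewriting $v_h^n - v_h^{n+1} = -\delta v_h^{n+1}$. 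The index-range asymmetry (the $\delta C_h$ sum running from $n=1$ whereas the $\delta v_h$ sum starts at $n=2$) falls out automatically from the reindexing.

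For the bound, the key observation is that the time dependence of $C_h(t)$ enters only through the convective volume integrand $\beta(t)\cdot\nabla w$ and through the face weight $|\beta(t)\cdot n|$ inside $s_u$; the remaining pieces (the constant $\beta_\infty$, the parameter $\varepsilon^\perp$, and the Nitsche-type boundary term) are time-independent. The mean value theorem in time then bounds both $|\beta(t^{n+\frac12})-\beta(t^{n-\frac12})|$ and $||\beta(t^{n+\frac12})\cdot n|-|\beta(t^{n-\frac12})\cdot n||$ pointwise by $\tau\beta_{1,\infty}$. The convective contribution to $(\delta C_h^{n+\frac12} w_h^n,v_h^{n+1})_\Omega$ is thus controlled by $\tau\beta_{1,\infty}\|\nabla w_h^n\|\|v_h^{n+1}\|$, which the inverse inequality \eqref{inverse} converts to $C\tau\beta_{1,\infty} h^{-1}\|w_h^n\|\|v_h^{n+1}\|$; and for the face contribution a face-wise Cauchy--Schwarz together with the discrete trace/inverse estimates \eqref{trace_h1}--\eqref{tracePk} (the same calculation that yields \eqref{stab_bound}, but without the $\beta_\infty$ weight) produces the matching bound multiplied by $\gamma_u$.

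Collecting these contributions, I would rewrite $\tau\beta_{1,\infty} h^{-1} = \frac{\tau(\beta_\infty+1)}{h}\cdot\frac{\beta_{1,\infty}}{1+\beta_\infty} = Co\cdot\frac{\beta_{1,\infty}}{1+\beta_\infty}$, absorbing $(1+\gamma_u)$ into the generic constant $C$ to recover the prefactor $Co\, C_{\gamma,\beta}$. Summing over $n$ and a trivial relabelling (using Young's inequality if one wishes to symmetrize the shifted product $\|w_h^n\|\|v_h^{n+1}\|$ into a single-index sum $\sum_{n=1}^N\|w_h^n\|\|v_h^n\|$) completes the argument. The only delicate step is the algebraic identity, where one must keep track of four different sums and three boundary terms simultaneously; the inequality itself is a direct application of the inverse-estimate machinery already deployed elsewhere in the paper.
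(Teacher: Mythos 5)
Your proposal is correct and follows essentially the same route as the paper: the identity is obtained by the same expansion of $\delta w_h^{n+1}$, reindexing, insertion of $C_h^{n-\frac12}-C_h^{n+\frac12}$, and peeling of the $n=N$ and $n=1$ endpoints; the inequality is obtained by the same pointwise bound $|\,|\beta^{n-\frac12}\cdot n|-|\beta^{n+\frac12}\cdot n|\,|\le|(\beta^{n-\frac12}-\beta^{n+\frac12})\cdot n|\le\tau\beta_{1,\infty}$ (the paper writes the difference as $\int_{t_{n-1/2}}^{t_{n+1/2}}\partial_t\beta\,dt$ rather than invoking the mean value theorem, which is immaterial) combined with the inverse estimate \eqref{inverse}. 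Your observation that the Nitsche boundary term in $s_u$ is time-independent and therefore drops out of $\delta C_h^{n+\frac12}$ is a point the paper leaves implicit but is handled identically in substance.
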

\begin{proof}
\begin{alignat*}{1}
\sum_{n=1}^{N-1}  (C_h^{n+\frac{1}{2}} \delta w_h^{n+1}, v_h^{n+1})_{\Omega}=&   \sum_{n=1}^{N-1}  \Big( (C_h^{n+\frac{1}{2}}  w_h^{n+1}, v_h^{n+1})_{\Omega}-  (C_h^{n+\frac{1}{2}} w_h^{n}, v_h^{n+1})_{\Omega} \Big)\\
=&  \sum_{n=2}^{N}   (C_h^{n-\frac{1}{2}}  w_h^{n}, v_h^{n})_{\Omega} - \sum_{n=1}^{N-1} (C_h^{n+\frac{1}{2}} w_h^{n}, v_h^{n+1})_{\Omega} \\
=& - \sum_{n=2}^{N-1}   (C_h^{n-\frac{1}{2}}  w_h^{n}, \delta v_h^{n+1})_{\Omega}+(C_h^{N-\frac{1}{2}}w_h^{N}, v_h^{N})_{\Omega}  \\
& - (C_h^{\frac{1}{2}}w_h^1, v_h^{2})_{\Omega}+ \sum_{n=1}^{N-1} ((C_h^{n-\frac{1}{2}}-C_h^{n+\frac{1}{2}}) w_h^{n}, v_h^{n+1})_{\Omega}.
\end{alignat*}
Moreover, we see that 
\begin{align*}
\sum_{n=1}^{N-1} ((C_h^{n-\frac{1}{2}}-&C_h^{n+\frac{1}{2}}) w_h^{n}, v_h^{n+1})_{\Omega}  =   \sum_{n=1}^{N-1} ( (\beta^{n-\frac{1}{2}}-\beta^{n+\frac{1}{2}}) \cdot \nabla w_h^n, v_h^{n+1})_{\Omega} \\&+{ \gamma_u \sum_{n=1}^{N-1} \sum_{F \in \mathcal{F}^{int}_h} \int_F h_F^2 (|\beta^{n-\frac{1}{2}}\cdot n|-|\beta^{n+\frac{1}{2}} \cdot n|) 
\jump{\nabla w^n_h} : \jump{\nabla v^{n+1}_h} ~\mbox{d}s}.  
\end{align*}
 This proves the identity. The proof of the inequality follows from the fact that $|(|\beta^{n-\frac{1}{2}} \cdot n|-|\beta^{n+\frac{1}{2}} \cdot n|)| \leq |(\beta^{n-\frac{1}{2}}-\beta^{n+\frac{1}{2}}) \cdot n|$ and $\beta^{n-\frac{1}{2}}-\beta^{n+\frac{1}{2}}= \int_{t_{n-\frac{1}{2}}}^{t_{n+\frac{1}{2}}} \partial_t \beta(t) dt$ and the inverse estimate \eqref{inverse}.
\end{proof}

Finally, we will need the following result. 
\begin{lemma}\label{Chlemma2}
For any $v_h^n \in V_h$ it holds
\begin{equation*}
\sum_{n=2}^{N-1} (C_h^{n-\frac{1}{2}} \delta v_h^{n}, 
\delta  \bar v_h^{n+1})_{\Omega}= -\frac12 \sum_{n=2}^{N-1} (C_h^{n-\frac{1}{2}} \delta  \delta v_h^{n+1}, \delta \bar v_h^{n+1} 
)_{\Omega} + \sum_{n=2}^{N-1}|\delta \bar v_h^{n+1}|_{s_u}^2.
\end{equation*}
\end{lemma}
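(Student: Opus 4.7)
The plan is to reduce this to an algebraic identity relating the three increments $\delta v_h^n$, $\delta \bar v_h^{n+1}$, and $\delta\delta v_h^{n+1}$, followed by a direct application of \eqref{Chsemi}.

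First I would unpack the definition of $\delta \bar v_h^{n+1}$. Since $\bar v_h^{n+1} = \tfrac12(v_h^{n+1}+v_h^n)$, we have
\begin{equation*}
\delta \bar v_h^{n+1} = \tfrac12(v_h^{n+1}-v_h^{n-1}) = \tfrac12\bigl(\delta v_h^{n+1} + \delta v_h^n\bigr),
\qquad \delta\delta v_h^{n+1} = \delta v_h^{n+1}-\delta v_h^n.
\end{equation*}
Solving the two relations for $\delta v_h^n$ gives the key identity
\begin{equation*}
\delta v_h^n = \delta \bar v_h^{n+1} - \tfrac12 \delta\delta v_h^{n+1}.
\end{equation*}

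Next I would substitute this decomposition into the term $(C_h^{n-\frac12}\delta v_h^n,\delta\bar v_h^{n+1})_\Omega$, expanding bilinearity:
\begin{equation*}
(C_h^{n-\frac12}\delta v_h^n,\delta\bar v_h^{n+1})_\Omega
= (C_h^{n-\frac12}\delta\bar v_h^{n+1},\delta\bar v_h^{n+1})_\Omega
 - \tfrac12 (C_h^{n-\frac12}\delta\delta v_h^{n+1},\delta\bar v_h^{n+1})_\Omega.
\end{equation*}
By the skew-symmetry identity \eqref{Chsemi} applied to $\delta\bar v_h^{n+1}\in V_h$ with $\beta$ evaluated at $t^{n-\frac12}$, the first term on the right equals $|\delta\bar v_h^{n+1}|_{s_u}^2$ (where $|\cdot|_{s_u}$ is understood at the time level $t^{n-\frac12}$ corresponding to $C_h^{n-\frac12}$). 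Summing the resulting identity over $n=2,\ldots,N-1$ yields the claimed formula.

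There is essentially no obstacle here: the only thing to verify carefully is the algebraic relation between the increment operators, and the only ingredient from the previous material is the identity \eqref{Chsemi}, which expresses the fact that the consistent part of $C_h$ is skew-symmetric (via integration by parts, using $\nabla\cdot\beta=0$ and $\beta\cdot n|_{\partial\Omega}=0$) so that testing $C_h w$ with $w$ itself leaves only the symmetric stabilization contribution $|w|_{s_u}^2$.
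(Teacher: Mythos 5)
Your proposal is correct and follows essentially the same route as the paper: both rest on the algebraic identity $\delta v_h^n = \delta\bar v_h^{n+1} - \tfrac12\delta\delta v_h^{n+1}$ (the paper writes it as $\delta\bar v_h^{n+1}-\delta v_h^n = \tfrac12\delta\delta v_h^{n+1}$) followed by an application of \eqref{Chsemi} to the diagonal term. Your remark that the resulting semi-norm $|\cdot|_{s_u}$ must be read at the time level $t^{n-\frac12}$ is a correct and slightly more careful reading than the paper's notation.
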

\begin{proof}
 Using (\ref{Chsemi}) and the relation 
\[
\delta  \bar v_h^{n+1} -\delta v_h^{n} = \frac12 \delta
v_h^{n+1}+\frac12 \delta v_h^{n}-\delta v_h^{n} = \frac12 \delta
v_h^{n+1}-\frac12 \delta v_h^{n} = \frac12\delta \delta v_h^{n+1},
\]
we obtain that
\begin{align*}
     \sum_{n=2}^{N-1} (C_h^{n-\frac{1}{2}} \delta v_h^{n}, 
\delta  \bar v_h^{n+1})_{\Omega} &=  \sum_{n=2}^{N-1} (C_h^{n-\frac{1}{2}} (\delta v_h^{n}-\delta  \bar v_h^{n+1}), \delta  \bar v_h^{n+1} )_{\Omega} + \sum_{n=2}^{N-1}|\delta  \bar v_h^{n+1}|_{s_u}^2\\ & = -\frac12 \sum_{n=2}^{N-1} (C_h^{n-\frac{1}{2}} \delta  \delta v_h^{n+1}, \delta \bar v_h^{n+1} 
)_{\Omega} + \sum_{n=2}^{N-1}|\delta \bar v_h^{n+1}|_{s_u}^2.
\end{align*}
The result now follows by combining the above estimates. 
\end{proof}

The following approximation results are standard. For those related to the stabilization terms we refer to \cite{BFH006}. For simplicity, we assume
that the functions to approximate are smooth enough. We also use $\pi_h$ to denote the $L^2$-orthogonal
projection onto $W_h$ as well as that onto $V_h$. Consider the following approximate estimate applicable to quasi-uniform meshes:
\begin{lemma}(Approximation). \label{Approximation} Let $k \geq 1$ be the polynomial degree. Assume that $v \in [H^{k+1}]^d$ and $q \in H^{k+1}$. Then, the following holds when $Re>1$:
\begin{align*}
    \norm{v - \pi_h v} + h \norm{\nabla (v - \pi_h v)}&\leq C h^{k+1} |v|_{[H^{k+1}]^d},\\
    |v - \pi_h v|_{s_u} & \leq C \beta_{\infty}h^{k+\frac{1}{2}} \norm{v}_{[H^{k+1}(\Omega)]^d},\\
    \norm{q- \pi_h q} &\leq C h^{k+1} |q|_{H^{k+1}},\quad
    |q- \pi_h q|_{s_p} \leq  C h^{k+\frac12} \norm{ q}_{H^{k+1}(\Omega)}.
    \end{align*}
\end{lemma}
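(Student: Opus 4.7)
The plan is to handle the four inequalities by separating routine $L^2$-projection approximation (items 1 and 3) from the face/boundary stabilization seminorms (items 2 and 4), the latter being obtained by combining standard approximation with the trace inequality on each element adjacent to a face.

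For the first inequality I would invoke the standard Bramble–Hilbert / Deny–Lions argument on the reference element, pulled back by an affine transformation, to get $\|v-\pi_h v\|_{L^2(T)} \leq C h^{k+1}|v|_{H^{k+1}(T)}$ locally. Summation over $T \in \mathcal{T}_h$ gives the zeroth-order bound. For the $H^1$ part, the quasi-uniformity of $\{\mathcal{T}_h\}_h$ delivers the global $H^1$-stability of the $L^2$-projection, so that $\|\nabla(v-\pi_h v)\|_{L^2(T)} \leq C h^{k}|v|_{H^{k+1}(\omega_T)}$; multiplying by $h$ and summing produces the $h^{k+1}$ bound claimed. The pressure estimate in item 3 is identical in scalar form.

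For item 2, I would note that because $v \in [H^{k+1}]^d$ with $k\geq 1$ the trace $\nabla v$ is single-valued on interior faces, so $\jump{\nabla(v-\pi_h v)}_F = -\jump{\nabla \pi_h v}_F$, and in particular
\begin{equation*}
\|\jump{\nabla(v-\pi_h v)}\|_{L^2(F)}^2 \leq C \sum_{T \supset F}\|\nabla(v-\pi_h v)\|_{L^2(\partial T)}^2.
\end{equation*}
Applying the trace inequality \eqref{trace_h1} and the approximation results for $\pi_h$ bounds this by $Ch^{-1}(h^{k}|v|_{H^{k+1}(\omega_T)})^2 + Ch(h^{k-1}|v|_{H^{k+1}(\omega_T)})^2 = Ch^{2k-1}\|v\|_{H^{k+1}(\omega_T)}^2$. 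Inserting into the definition \eqref{eq:stab_form}, bounding $|\beta\cdot n|+\beta_\infty \varepsilon^\perp \leq C\beta_\infty$, multiplying by $h_F^2$, and summing over $F\in\mathcal{F}_h^{\mathrm{int}}$ (with finite overlap of patches $\omega_T$) yields a contribution of order $\beta_\infty h^{2k+1}\|v\|_{H^{k+1}}^2$. The boundary term $\beta_\infty\|(v-\pi_h v)\cdot n\|_{L^2(\partial\Omega)}^2$ is treated similarly via the trace inequality and gives the same $h^{2k+1}$ order. Taking the square root produces the claimed $Ch^{k+1/2}$ bound (with a $\beta_\infty^{1/2}$ factor absorbed in the generic constant in the $Re>1$ regime).

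For item 4 I would proceed in exactly the same fashion, using that $\jump{\nabla q}_F = 0$ since $q \in H^{k+1} \subset H^2$, then applying trace and $L^2$-projection error bounds to obtain $\|\jump{\nabla(q-\pi_h q)}\|_{L^2(F)}^2 \leq Ch^{2k-1}\|q\|_{H^{k+1}(\omega_T)}^2$. The weight $\gamma_p \xi h^3/\mu$ reduces in the $Re\geq 1$ regime (where $\xi = \mu/(h\beta_\infty)$) to $C h^2/\beta_\infty$, so after summation we get $|q-\pi_h q|_{s_p}^2 \leq C\beta_\infty^{-1} h^{2k+1}\|q\|_{H^{k+1}}^2$, and the stated inequality follows. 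I do not anticipate any genuine obstacle here; the only mildly subtle point is using global $H^1$-stability of $\pi_h$ under quasi-uniformity to get local approximation estimates, but this is entirely standard and already used elsewhere in the paper (cf.\ the citation to \cite{BFH006}).
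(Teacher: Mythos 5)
Your proposal is correct and follows exactly the standard route that the paper itself invokes without proof (it only remarks that these results are standard and refers to \cite{BFH006} for the stabilization seminorms): best approximation plus $H^1$-stability of $\pi_h$ on quasi-uniform meshes for the $L^2$ and $H^1$ bounds, then the element trace inequality \eqref{trace_h1} combined with the weights in \eqref{eq:stab_form} and \eqref{pres_stab} (using $\xi=Re^{-1}$ when $Re>1$) for the two seminorm bounds, and the exponents all check out. The only slip is presentational: the element-local estimate $\norm{v-\pi_h v}_{L^2(T)}\leq Ch^{k+1}\seminorm{v}_{H^{k+1}(T)}$ does not hold verbatim for the global $L^2$-projection onto the continuous space $W_h$, but your subsequent use of patch norms $\norm{\cdot}_{H^{k+1}(\omega_T)}$ (or simply the global best-approximation bound, which is all the lemma requires) repairs this.
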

For the time discretization part of the error analysis we need some well known results on truncation
error analysis of finite difference operators. 
\begin{lemma} It holds, 
\begin{subequations} \label{estimates}
    \begin{alignat}{1} 
    \norm{\delta u^{n+1} -\tau \partial_t u(t^{n+\frac12})}^2 &\leq C \tau^5 \norm{\partial_{ttt}u}_{L^{2}((t^n,t^{n+1});L^{2}(\Omega))}^2, \label{estimates1}\\
    \norm{\bar u^{n+1} -u(t^{n+\frac12})}^2 &\leq C \tau^3 \norm{\partial_{tt}u}_{L^{2}((t^n,t^{n+1});L^{2}(\Omega))}^2,\label{estimates2} \\ 
    \norm{\hat u^{n+1} -u(t^{n+\frac12})}^2 &\leq C \tau^3 \norm{\partial_{tt}u}_{L^{2}((t^n,t^{n+1});L^{2}(\Omega))}^2. \label{estimates3}
 \end{alignat}
 \end{subequations}
\end{lemma}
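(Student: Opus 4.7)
The plan is to apply Taylor's theorem with integral remainder, expanding $u(t)$ about the midpoint $t^{n+\frac{1}{2}}$ at the various nodal times, and then to bound the resulting remainders via Cauchy--Schwarz. This yields all three finite-difference truncation estimates in a uniform fashion.

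For \eqref{estimates1} I would use the second-order Taylor formula
\begin{equation*}
u(t)=u(t^{n+\frac{1}{2}})+(t-t^{n+\frac{1}{2}})\partial_t u(t^{n+\frac{1}{2}})+\tfrac{1}{2}(t-t^{n+\frac{1}{2}})^2\partial_{tt}u(t^{n+\frac{1}{2}})+R(t),
\end{equation*}
with $R(t)=\tfrac{1}{2}\int_{t^{n+\frac{1}{2}}}^{t}(t-s)^2\partial_{ttt}u(s)\,ds$. Specializing to $t=t^{n+1}$ and $t=t^n$ and subtracting, the quadratic term (which depends only on $(t-t^{n+\frac{1}{2}})^2=(\tau/2)^2$ in both cases) drops out, leaving
\begin{equation*}
\delta u^{n+1}-\tau\,\partial_t u(t^{n+\frac{1}{2}})=R(t^{n+1})-R(t^n).
\end{equation*}
Applying Cauchy--Schwarz and computing $\int_{t^{n+\frac{1}{2}}}^{t^{n+1}}(t^{n+1}-s)^4\,ds=\tfrac{1}{5}(\tau/2)^5$ gives $\|R(t^{n+1})\|^2\le C\tau^5\|\partial_{ttt}u\|^2_{L^2((t^n,t^{n+1});L^2(\Omega))}$, and symmetrically for $R(t^n)$.

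For \eqref{estimates2} I use the first-order expansion with integral remainder $\tilde R(t)=\int_{t^{n+\frac{1}{2}}}^{t}(t-s)\partial_{tt}u(s)\,ds$. Averaging the expansions at $t^{n+1}$ and $t^n$, the linear coefficient cancels and
\begin{equation*}
\bar u^{n+1}-u(t^{n+\frac{1}{2}})=\tfrac{1}{2}\bigl(\tilde R(t^{n+1})+\tilde R(t^n)\bigr),
\end{equation*}
and Cauchy--Schwarz with $\int_{t^{n+\frac{1}{2}}}^{t^{n+1}}(t^{n+1}-s)^2\,ds=\tfrac{1}{3}(\tau/2)^3$ delivers the $\tau^3$ bound. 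For \eqref{estimates3} the same first-order expansion is evaluated instead at $t=t^n$ and $t=t^{n-1}$. The coefficients in $\hat u^{n+1}=\tfrac{3}{2}u^n-\tfrac{1}{2}u^{n-1}$ are precisely those that make the linear-in-$\partial_t u$ term vanish,
\begin{equation*}
\tfrac{3}{2}\cdot\bigl(-\tfrac{\tau}{2}\bigr)-\tfrac{1}{2}\cdot\bigl(-\tfrac{3\tau}{2}\bigr)=0,
\end{equation*}
so $\hat u^{n+1}-u(t^{n+\frac{1}{2}})=\tfrac{3}{2}\tilde R(t^n)-\tfrac{1}{2}\tilde R(t^{n-1})$ and Cauchy--Schwarz yields the claimed bound (with the integration domain extended to cover $(t^{n-1},t^{n+1/2})$, which the stated $L^2$-norm majorizes up to an innocuous constant).

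There is no genuine obstacle here: each estimate amounts to cancellation by symmetry around $t^{n+\frac{1}{2}}$ of the low-order Taylor terms, followed by a direct Cauchy--Schwarz bound on the integral remainder. The only item to keep track of carefully is the range of integration in the third estimate, where $\hat u^{n+1}$ depends on $u^{n-1}$ so the remainder naturally lives on $(t^{n-1},t^{n+\frac{1}{2}})$; this is of length $3\tau/2$ and so contributes only a constant factor.
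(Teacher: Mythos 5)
Your proof is correct and follows exactly the route the paper indicates (Taylor expansion with integral remainder about $t^{n+\frac12}$, symmetry cancellation of the low-order terms, then Cauchy--Schwarz on the remainder); the paper itself only cites this as standard and refers to Thom\'ee, so you have simply written out the details. One caveat on the third estimate: your own computation shows the remainder $\tilde R(t^{n-1})$ lives on $(t^{n-1},t^{n+\frac12})$, so the right-hand side must carry the norm of $\partial_{tt}u$ over $(t^{n-1},t^{n+1})$ rather than $(t^n,t^{n+1})$ --- the interval as printed in the lemma cannot control $u(t^{n-1})$ at all, so this is a typo in the statement --- and your parenthetical has the majorization backwards (the norm over the larger interval dominates the stated one, not the other way around).
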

\begin{proof}
Standard results for Crank-Nicolson approximation. Proof using Taylor development and Cauchy-Schwarz inequality. For details in slightly different norms we refer to \cite[Theorem 1.6]{Thom97}.
\end{proof}

Finally, we will need the following form of the discrete Gronwall's inequality that is also standard material and whose proof is omitted.
\begin{proposition}\label{Gronwall}
Let $\phi_n$ be a sequence of non-negative numbers and let $\psi$ and $\eta$ be non-negative numbers such that 
$
    \phi_N \leq \psi +\eta\sum^{N-1}_{i=1}\phi_i.
$
Then, the following estimate holds
$
    \phi_N \leq (1+N\eta \e^{\eta N})\psi.
$
\end{proposition}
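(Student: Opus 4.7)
The plan is to treat this as a routine discrete Gronwall argument, interpreting the hypothesis as holding at every level $n \leq N$ (the stated form with only the index $N$ is clearly meant as a schematic; without control of $\phi_i$ for $i<N$ the conclusion is vacuous). Under this reading I would prove by induction on $n$ the sharper bound
\begin{equation*}
\phi_n \leq \psi(1+\eta)^{n-1}, \qquad n=1,\dots,N,
\end{equation*}
and then convert this to the stated form in a separate algebraic step.

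For the induction, the base case $n=1$ is immediate since the sum in the hypothesis is empty, giving $\phi_1 \leq \psi$. For the inductive step, assuming the bound for all $k<n$, I would substitute into the hypothesis and sum the geometric series,
\begin{equation*}
\phi_n \leq \psi + \eta \sum_{i=1}^{n-1} \psi(1+\eta)^{i-1} = \psi + \psi\bigl((1+\eta)^{n-1} - 1\bigr) = \psi(1+\eta)^{n-1},
\end{equation*}
closing the induction.

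To recover the statement, I would take $n=N$ and estimate $(1+\eta)^{N-1}$ termwise via the binomial theorem: isolating the $k=0$ term yields $1$, and bounding the remaining terms using $\binom{N-1}{k} \leq (N-1)^k/k! \leq N^k/k!$ gives
\begin{equation*}
(1+\eta)^{N-1} \leq 1 + N\eta \sum_{k=0}^{N-2} \frac{(N\eta)^k}{(k+1)!} \leq 1 + N\eta\, \e^{\eta N},
\end{equation*}
so $\phi_N \leq (1+N\eta\,\e^{\eta N})\psi$ as claimed. There is no genuine obstacle here; the only conceptual point is the implicit quantification of the hypothesis over all intermediate indices, which is the standard formulation of the discrete Gronwall inequality used throughout the subsequent error analysis.
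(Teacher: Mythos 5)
Your argument is correct. The paper itself omits the proof of this proposition as standard material, so there is nothing to compare against; your reading of the hypothesis as holding for every index $n\le N$ is indeed the intended (and necessary) one, and your induction to $\phi_n\le\psi(1+\eta)^{n-1}$ followed by the binomial estimate $(1+\eta)^{N-1}\le 1+N\eta\,\e^{\eta N}$ is a clean, complete derivation of the stated bound. (A marginally shorter final step: $(1+\eta)^{N-1}\le \e^{\eta N}$ together with the elementary inequality $\e^{x}\le 1+x\e^{x}$ for $x\ge 0$ gives the same conclusion.)
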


\section{Stability and error analysis, Crank-Nicolson method with extrapolation}\label{sec:stab_error}
We below give the stability results for the Crank-Nicolson scheme (\ref{eq:momentum_CN})--(\ref{eq:mass_CN}). 
 
Here, we define the triple norm by: 
\[
|||v,p|||^2 := \tau \sum_{n=1}^{N-1} ( \|
 v^{n+1}\|_{E}^2 +\seminorm{p^{n+1}}^{2}_{s_p}).
\]
The following semi-norm on the second order increments of the solution will also be useful for the Crank-Nicolson analysis:

\[
|(u_h,p_h)|_{\delta \delta}^2 := \sum_{n=2}^{N-1} \|\delta  \delta
u_h^{n+1}\|^2   +\tau |\delta p_h^{N}|^2_{s_p}+ \tau \sum_{n=3}^{N-1}|\delta  \delta p_h^{n+1}|^2_{s_p}. 
\]
We will make use of the following relation,
\begin{equation}\label{eq:hat_bound}
\hat u_h^{n+1} =\bar u_h^{n+1}-\frac{1}{2}\delta \delta u_h^{n+1}.
 \end{equation}
\subsection{Stability Crank-Nicolson, general case}

Before we prove the stability result we need to estimate the pressure in terms of
the velocity.

\begin{lemma} \label{Ghlemma_CN}It holds that 
\begin{align} 
\sum^{N-1}_{n=1}\|G_h p_h^{n+1}\|^2 
\lesssim  \sum^{N-1}_{n=1}  \left({\frac{\mu}{h^2}} \| \bar u_h^{n+1}\|_A^2 + \frac{1}{h^2_{\beta}}  \| \hat u_h^{n+1}\|^2 +   {\frac{1}{\tau}}  \norm{\tilde{L}^{n}}^2_{\mathrm{div}}  + {\frac{1}{\tau}} \|L^{n+1}\|_h^2 \right).
\end{align}
\end{lemma}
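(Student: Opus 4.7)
The plan is to rewrite the momentum equation \eqref{eq:momentum_CN} as
\[
G_h p_h^{n+1} = L_h^{n+1} - \tau^{-1}\delta u_h^{n+1} - C_h^{n+\frac12}\hat u_h^{n+1} - A_h \bar u_h^{n+1},
\]
take the $L^2(\Omega)$ inner product with $G_h p_h^{n+1}$, sum over $n = 1,\dots,N-1$, and bound each of the four resulting terms so that small multiples of $\|G_h p_h^{n+1}\|^2$ can be absorbed on the left via Young's inequality.

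For the diffusive and convective pieces, Cauchy-Schwarz together with the inverse bound $\|A_h v_h\| \le C\sqrt{\mu}\,h^{-1}\|v_h\|_A$ (a duality-type consequence of \eqref{invA}) and the estimate \eqref{estimate_Ch} immediately deliver contributions of the form $\tfrac{\mu}{h^2}\|\bar u_h^{n+1}\|_A^2$ and $h_\beta^{-2}\|\hat u_h^{n+1}\|^2$ after Young. For $(L_h^{n+1}, G_h p_h^{n+1}) = L^{n+1}(G_h p_h^{n+1})$, the definition of $\|\cdot\|_h$ is combined with Lemma~\ref{enery_est_222}, which allows the $\|\cdot\|_E$ contribution in the denominator to be traded for a factor $\tau^{-1/2}$ in the numerator; Young's inequality then produces $C\tau^{-1}\|L^{n+1}\|_h^2$.

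The principal obstacle is the term $\tau^{-1}(\delta u_h^{n+1}, G_h p_h^{n+1})$. By the adjoint identity \eqref{Gh} and the mass equation \eqref{eq:mass_CN} evaluated at both time levels $n$ and $n+1$,
\[
(\delta u_h^{n+1}, G_h p_h^{n+1}) = -\delta \tilde L^{n+1}(p_h^{n+1}) + s_p(\delta p_h^{n+1}, p_h^{n+1}),
\]
where $\delta \tilde L^{n+1} := \tilde L^{n+1} - \tilde L^n$. Summing in $n$, the polarization identity $2\,s_p(\delta p_h^{n+1}, p_h^{n+1}) = |p_h^{n+1}|_{s_p}^2 - |p_h^n|_{s_p}^2 + |\delta p_h^{n+1}|_{s_p}^2$ telescopes, so after the overall sign flip its contribution appears as $-\tfrac{1}{2\tau}|p_h^N|_{s_p}^2 + \tfrac{1}{2\tau}|p_h^1|_{s_p}^2 - \tfrac{1}{2\tau}\sum_n |\delta p_h^{n+1}|_{s_p}^2$, which is favorable up to a fixed initial piece. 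For the $\delta \tilde L^{n+1}$ piece I would apply summation by parts in $n$ to transfer the increment off $\tilde L$ and onto $p_h$, producing boundary terms at $n=1,N$ and a sum of the form $\sum_m \tilde L^m(\delta p_h^{m+1})$; each factor is then controlled via the definition \eqref{eq:L_div_cont} of $\|\cdot\|_{\mathrm{div}}$ and Young's inequality.

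The delicate step is the bookkeeping: the residual $|p_h^{m+1}|_{s_p}^2$ and $|\delta p_h^{m+1}|_{s_p}^2$ factors arising from these Young splits must be absorbed against the positive telescopic reservoir produced above. One must calibrate the Young constants so that only a fraction of each positive reservoir is consumed, leaving the clean right-hand side $\sum_n \tfrac{1}{\tau}\|\tilde L^n\|_{\mathrm{div}}^2$ in the stated form, with the favorable $|p_h^N|_{s_p}^2$ dropped (or kept on the left). All other terms in the summed estimate then match the claimed bound, concluding the proof.
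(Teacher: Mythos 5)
Your proposal is correct and follows essentially the same route as the paper's proof: test the momentum equation with $G_h p_h^{n+1}$, absorb small multiples of $\|G_h p_h^{n+1}\|^2$ via Young, use the inverse bounds for $A_h$ and $C_h$ and Lemma~\ref{enery_est_222} for the source term, and convert the $\tau^{-1}\delta u_h^{n+1}$ contribution into a favorable telescoping of $s_p$ plus $\norm{\tilde L}_{\mathrm{div}}$ terms through the adjoint identity \eqref{Gh} and the mass equation. The only cosmetic difference is the order of operations on that last term (you substitute the mass equation at both time levels and then sum by parts on $\tilde L$, while the paper applies Abel summation to $(G_h^*\delta u_h^{n+1},p_h^{n+1})$ first), which yields the same terms and cancellations.
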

\begin{proof}
We test the equation \eqref{eq:momentum_CN} with $G_h p_h^{n+1}$ to get
\begin{alignat*}{1}
  \|G_h p_h^{n+1}\|^2=& -\tau^{-1}(\delta u_h^{n+1}, G_h p_h^{n+1})_{\Omega} -(C_h^{n+\frac{1}{2}} \hat u_h^{n+1}, G_h p_h^{n+1})_{\Omega} \\
 &-(A_h \bar u_h^{n+1}, G_h p_h^{n+1})_{\Omega}+ L^{n+1}(G_h p_h^{n+1}).
\end{alignat*}
Using \eqref{estimate_Ch} we get
\begin{equation*}
-\sum^{N-1}_{n=1}(C_h^{n+\frac{1}{2}} \hat u_h^{n+1}, G_h p_h^{n+1})_{\Omega} \le   \sum^{N-1}_{n=1} \frac{C}{ 4 \epsilon h_{\beta}^2} \| \hat u_h^{n+1}\|^2 + \epsilon \sum^{N-1}_{n=1}\|G_h p_h^{n+1}\|^2.
\end{equation*}
Using \eqref{invA} we have
\begin{alignat*}{1}
    -\sum^{N-1}_{n=1} (A_h \bar u_h^{n+1}, G_h p_h^{n+1})_{\Omega}  &\le   C \frac{\mu}{4 \epsilon h^2} \sum^{N-1}_{n=1} \| \bar u_h^{n+1}\|_A^2 +  \epsilon \sum^{N-1}_{n=1}\|G_h p_h^{n+1}\|^2.
\end{alignat*}
Using \eqref{Gh} and Lemma \ref{lem:sum_by_parts_spec}, with $G_h^*$ instead of $C_h$, we obtain
\begin{align}\label{sp_delta}
-\sum^{N-1}_{n=1}(\delta u_h^{n+1}, G_h p_h^{n+1})_{\Omega}&= \sum^{N-1}_{n=1}( G_h^* \delta u_h^{n+1},  p_h^{n+1})_{\Omega} \nonumber
\\&= -\sum^{N-1}_{n=2}( G_h^*  u_h^{n}, \delta  p_h^{n+1})_{\Omega}+( G_h^*  u_h^{N},  p_h^{N})_{\Omega}-( G_h^*  u_h^{1},  p_h^{2})_{\Omega}.
\end{align}
By the relation (\ref{eq:mass_CN}) the first term of (\ref{sp_delta}) may be written as
\begin{align*}
  -\sum^{N-1}_{n=2}( G_h^*  u_h^{n}, \delta  p_h^{n+1})_{\Omega} =-\sum^{N-1}_{n=2} \tilde{L}^{n}( \delta  p_h^{n+1})+\sum^{N-1}_{n=2}s_p(   p_h^{n}, \delta  p_h^{n+1}). 
\end{align*}
Using the continuity of $\tilde L$ followed by Cauchy-Schwarz and Young's inequalities,
\begin{align*}
 -\sum^{N-1}_{n=2} \tilde{L}^n( \delta  p_h^{n+1}) &\le \sum_{n=2}^{N-1}\norm{\tilde{L}^{n}}_{\mathrm{div}}|\delta p_h^{n+1}|_{s_p}\\ &\le  \sum_{n=2}^{N-1}\norm{\tilde{L}^{n}}^2_{\mathrm{div}}+\frac14 \sum_{n=2}^{N-1}|\delta p_h^{n+1}|^2_{s_p}.
\end{align*}
{
For the $s_p(p_h^{n},\delta  p_h^{n+1})$ term recall that $2 b(a-b) = a^2 - (a-b)^2 - b^2$ to obtain
\begin{align*}
\sum^{N-1}_{n=2}s_p(   p_h^{n}, \delta  p_h^{n+1}) &= \frac12 \sum^{N-1}_{n=2}(s_p(   p_h^{n+1},   p_h^{n+1})-s_p(  \delta p_h^{n+1},  \delta p_h^{n+1})-s_p(   p_h^{n},   p_h^{n})) \\&= \frac12 s_p(p_h^N,p_h^N)-\frac{1}{2}\sum^{N-1}_{n=2}s_p(  \delta p_h^{n+1},  \delta p_h^{n+1})-\frac12 s_p(p_h^2,p_h^2).
\end{align*}
}
The last two terms of (\ref{sp_delta}) are handled by using (\ref{eq:mass_CN}),
\begin{align*}
  ( G_h^*  u_h^{N},  p_h^{N})_{\Omega} -( G_h^*  u_h^{1},  p_h^{2})_{\Omega}  &=\tilde{L}^{N}(p_h^N)-s_p(p_h^N,p_h^N)-\tilde{L}^1(p_h^2). 
\end{align*}
Using the continuity of $\tilde {L}$  we see that
\begin{align*}
 \tilde{L}^{N}(p_h^N)-\tilde{L}^1(p_h^2) \le \frac12 |p_h^N|^2_{s_p}+2  \norm{\tilde{L}^{N}}^2_{\mathrm{div}}+\frac12 |p_h^2|^2_{s_p}+2 \norm{\tilde{L}^{1}}^2_{\mathrm{div}}.
\end{align*}
Finally, using \eqref{enery_est} we obtain
\begin{alignat*}{1}
 \sum^{N-1}_{n=1} L^{n+1}(G_h p_h^{n+1}) \le &  \sum^{N-1}_{n=1} \|L^{n+1}\|_h (\|G_h p_h^{n+1}\|_E^2+ \|G_h p_h^{n+1}\|^2)^{\frac{1}{2}} \\
\le & \frac{1}{\tau}(4 \epsilon (C_E+\tau))\sum^{N-1}_{n=1} \|L^{n+1}\|_h^2 + \epsilon \sum^{N-1}_{n=1}\|G_h p_h^{n+1}\|^2.  
\end{alignat*}
Then by summing the above inequalities and noting that the terms $|p_h^2|_{s_p}$ and $|p_h^N|_{s_p}$ cancel we conclude.
\end{proof}

The analysis also requires stronger control of the increments in $|(u_h,v_h)|_{\delta\delta}$.

\begin{lemma}\label{lem:time_diss} Let $u_h$ solve (\ref{eq:momentum_CN}). If  $Re >1$ and  ${Co_{max} := \max} \{Co,Co_{4/3}\}\leq1$. Let $k \geq 1$, then the following estimate holds
\begin{align} \label{eq_1212}
|(u_h,p_h)|_{\delta\delta}^2  &
 \lesssim {\tau \sum_{n=2}^{N}\norm{ L^{n}}_h^2}+ \tau\sum_{n=1}^{N}\norm{\tilde{L}^{n}}^{2}_{\mathrm{div}} \nonumber\\& + \tau Co_{max} (1 + C_{\gamma,\beta}) \sum_{n=1}^{N-1}\|u_h^{n}\|^2 +{Co\Tnorm{\bar u_h, p_h}^2}+\tau | p_h^{2}|^2_{s_p}.
\end{align}
Moreover, if $Re>1$ and $Co$ is sufficiently small, then the following estimate holds
\begin{align} \label{eq_1212_1}
|(u_h,p_h)|_{\delta\delta}^2 &
  \lesssim {\tau \sum_{n=2}^{N-1}\norm{ L^{n+1}}_h^2}+ \tau\sum_{n=0}^{N-1}\norm{\tilde{L}^{n+1}}^{2}_{\mathrm{div}}+ \tau Co C_{\gamma, \beta}\sum_{n=2}^{N-1}\|u_h^{n}\|^2 \nonumber \\ &+{Co^{\frac12}\Tnorm{\bar u_h, p_h}^2}+Co^{\frac12} \sum^{N-1}_{n=2}\norm{ \delta  u_h^{n+1}-\pi_0\delta  u_h^{n+1}}^2.
\end{align}
\end{lemma}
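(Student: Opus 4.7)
The plan is to differentiate the momentum equation (\ref{eq:momentum_CN}) in time by subtracting the scheme at step $n$ from that at step $n+1$, producing
\[
\tau^{-1}\delta\delta u_h^{n+1}+G_h\delta p_h^{n+1}+C_h^{n+\frac12}\delta\hat u_h^{n+1}+\delta C_h^{n+\frac12}\hat u_h^{n}+A_h\delta\bar u_h^{n+1}=\delta L_h^{n+1},
\]
and then test with $\tau\delta\delta u_h^{n+1}$ before summing from $n=2$ to $N-1$. The time-derivative term contributes exactly the leading piece $\sum_{n=2}^{N-1}\|\delta\delta u_h^{n+1}\|^2$ of $|(u_h,p_h)|_{\delta\delta}^2$; the task is then to absorb a small fraction of this into the LHS while dispatching every other term into one of the data contributions on the RHS.

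I would treat the pressure gradient by dualizing, $\tau(G_h\delta p_h^{n+1},\delta\delta u_h^{n+1})=-\tau(G_h^*\delta\delta u_h^{n+1},\delta p_h^{n+1})$, and then apply $\delta\delta$ to the mass equation (\ref{eq:mass_CN}) to substitute $G_h^*\delta\delta u_h^{n+1}=\delta\delta\tilde L_h^{n+1}-S_h^p\delta\delta p_h^{n+1}$. The identity $2s_p(a-b,a)=|a|_{s_p}^2-|b|_{s_p}^2+|a-b|_{s_p}^2$ with $a=\delta p_h^{n+1}$, $b=\delta p_h^n$ telescopes to produce $\tau|\delta p_h^N|_{s_p}^2$ and $\tau\sum|\delta\delta p_h^{n+1}|_{s_p}^2$ on the LHS, at the expense of $\tau|\delta p_h^2|_{s_p}^2\leq 2\tau|p_h^2|_{s_p}^2+2\tau|p_h^1|_{s_p}^2$ and a $\delta\delta\tilde L$ contribution handled through continuity (\ref{eq:L_div_cont}) with Young, yielding $\tau\sum\norm{\tilde L^{n+1}}_{\mathrm{div}}^2$ together with a $Co\,\Tnorm{\bar u_h,p_h}^2$ remainder. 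For the viscous term, $(A_hv,w)\leq\|v\|_A\|w\|_A$ combined with (\ref{invA}) gives $\tau(A_h\delta\bar u_h^{n+1},\delta\delta u_h^{n+1})\leq\frac{1}{8}\|\delta\delta u\|^2+C\mu\tau^2h^{-2}\|\delta\bar u\|_A^2$, and since $\mu\tau/h^2\leq Co/Re\leq Co$ for $Re>1$, the second piece is absorbable into $Co\,\Tnorm{\bar u_h,p_h}^2$. The forcing term is handled via the $\norm{\cdot}_h$ continuity of $L$ together with (\ref{enery_est}) to produce $\tau\sum\norm{L^{n+1}}_h^2$.

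The main obstacle is the convection, splitting after differencing as $C_h^{n+\frac12}\delta\hat u_h^{n+1}+\delta C_h^{n+\frac12}\hat u_h^n$. For (\ref{eq_1212}) I would use (\ref{estimate_Ch}) in the form $\tau\|C_h v\|\leq C\,Co\,\|v\|$, together with the commutator bound $\tau\|\delta C_h^{n+\frac12}v\|\lesssim C_{\gamma,\beta}\,Co\,\|v\|$ derived exactly as in the proof of Lemma \ref{lem:sum_by_parts_spec} via $\beta^{n+\frac12}-\beta^{n-\frac12}=\int_{t^{n-\frac12}}^{t^{n+\frac12}}\partial_t\beta\,dt$ and the inverse inequality (\ref{inverse}). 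Cauchy--Schwarz and Young's inequality, followed by (\ref{diff_stab}) and the definition of $\hat u$ to pass from $\|\delta\hat u\|$ to $\|u_h^n\|+\|u_h^{n-1}\|+\|u_h^{n-2}\|$, deliver the target $\tau\,Co_{max}(1+C_{\gamma,\beta})\sum\|u_h^n\|^2$ under the CFL $Co_{max}\leq 1$. For the sharper estimate (\ref{eq_1212_1}), I would use (\ref{estimate_Ch2}) in place of (\ref{estimate_Ch}), replacing $Co$ with $\tau^{1/4}Co_{4/3}^{3/4}$, and split $\delta\hat u_h^{n+1}$ via (\ref{eq:hat_bound}) into a $\delta\bar u_h^{n+1}$ piece absorbed into $Co^{1/2}\Tnorm{\bar u_h,p_h}^2$ and a residual whose piecewise-constant content is isolated through $\delta u=(\delta u-\pi_0\delta u)+\pi_0\delta u$, producing the $Co^{1/2}\sum\|\delta u-\pi_0\delta u\|^2$ remainder since $\pi_0\delta u$ interacts with $C_h$ only through the controllable stabilization. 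Assembling the estimates and rearranging yields both inequalities.
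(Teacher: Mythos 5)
Your overall skeleton---difference the momentum equation in time, test with $\tau\delta\delta u_h^{n+1}$, dualize the pressure gradient through the mass equation and telescope the $s_p$ identities so that $\tau|\delta p_h^N|_{s_p}^2$ and $\tau\sum|\delta\delta p_h^{n+1}|_{s_p}^2$ land on the left---is exactly the paper's, and your treatment of the viscous, forcing and commutator ($\delta C_h^{n+\frac12}$) terms is essentially correct. The gap is in the dominant convection term $-\tau\sum_n(C_h^{n-\frac12}\delta\hat u_h^{n+1},\delta\delta u_h^{n+1})_{\Omega}$. Bounding it by $C\,Co\,\|\delta\hat u_h^{n+1}\|\,\|\delta\delta u_h^{n+1}\|$ via \eqref{estimate_Ch} and then passing to $\|u_h^n\|+\|u_h^{n-1}\|+\|u_h^{n-2}\|$ produces, after Young's inequality, a term $C\,Co^2\sum_n\|u_h^n\|^2$. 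Since $Co^2=\tau\,Co\,(\beta_\infty+1)/h$, this is \emph{not} bounded by $\tau\,Co_{\max}\sum_n\|u_h^n\|^2$: the coefficient of $\sum_n\|u_h^n\|^2$ is missing the factor $\tau$ that the discrete Gronwall step in the subsequent stability theorem requires, and the resulting exponent $N\,Co^2\sim T\,Co\,(\beta_\infty+1)/h$ blows up as $h\to0$ under a fixed CFL. The missing idea is to use the scheme a second time: write $\delta\hat u_h^{n+1}=\tfrac32\delta u_h^{n}-\tfrac12\delta u_h^{n-1}$ and substitute the momentum equation for $\delta u_h^{n}$, converting it into $\tau$ times discrete operators acting on $u_h$, $p_h$ and $L_h$. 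The worst resulting term, $\tau^2(C_h^{n-\frac12}C_h^{n-\frac12}\hat u_h^{n},\delta\delta u_h^{n+1})_{\Omega}$, is then estimated by applying \eqref{estimate_Ch} twice, which yields $(\tau/h_\beta)^4\|\hat u_h^{n}\|^2=\tau\,Co_{4/3}^3\|\hat u_h^{n}\|^2\le\tau\,Co_{\max}\|\hat u_h^{n}\|^2$. This is the only place the $4/3$-CFL enters the analysis, and your route for \eqref{eq_1212} never produces it---a sign that the bound cannot close as stated.

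Your plan for \eqref{eq_1212_1} is also off target: replacing \eqref{estimate_Ch} by \eqref{estimate_Ch2} reintroduces $Co_{4/3}$, whereas the entire purpose of \eqref{eq_1212_1} is to eliminate that dependence (it feeds the improved $k=1$ stability result under the plain hyperbolic CFL). The correct mechanism---which you partially identify---is to insert $\pi_0\delta\hat u_h^{n+1}$ into the convection term, use that the elementwise convective derivative of a piecewise constant vanishes so that only the boundary term $(\beta_\infty\pi_0\delta\hat u_h^{n+1}\cdot n,\delta\delta u_h^{n+1}\cdot n)_{\partial\Omega}$ survives, and then control the boundary contributions with the trace inequality \eqref{tracePk}, the relation \eqref{eq:hat_bound} and the $s_u$ seminorm of $\bar u_h$; no $Co_{4/3}$ appears. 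Finally, a small point: your bound $\tau|\delta p_h^2|_{s_p}^2\le2\tau|p_h^2|_{s_p}^2+2\tau|p_h^1|_{s_p}^2$ invokes $p_h^1$, which the scheme never defines (the pressure sequence starts at $p_h^2$); the temporal boundary terms must instead be organized around $(\delta p_h^{3},G_h^*\delta\delta u_h^{3})_{\Omega}$ and $|p_h^2|_{s_p}$ alone, as in the statement of the lemma.
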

\begin{proof} 
Multiplying (\ref{eq:momentum_CN}) by $\tau$, we see that {for $n \ge 2$}
\begin{align*}
\delta  \delta
u_h^{n+1} &= \delta u_h^{n+1} - \delta
u_h^{n} \\ &= -\tau C^{n-\frac{1}{2}}_h \delta \hat u_h^{n+1} -\tau (\delta C^{n+\frac{1}{2}}_h) \hat u_h^{n+1} -\tau A_h \delta \bar u_h^{n+1} - \tau G_h \delta p_h^{n+1} +{\tau \delta L^{n+1}}.
\end{align*}
Multiplying with $\delta  \delta
u_h^{n+1}$  and integrating, 
\begin{align}\label{eq:increm}
&\|\delta  \delta
u_h^{n+1}\|^2  \nonumber \\ &=
-\tau (C^{n-\frac{1}{2}}_h \delta \hat u_h^{n+1},\delta  \delta
u_h^{n+1})_{\Omega} - \tau (\delta C^{n+\frac{1}{2}}_h\hat u_h^{n+1},\delta  \delta
u_h^{n+1})_{\Omega} -\tau (A_h \delta  \bar u_h^{n+1},\delta  \delta
u_h^{n+1})_{\Omega} \nonumber \\ &- \tau (G_h \delta p_h^{n+1},\delta  \delta
u_h^{n+1})_{\Omega}+\tau \delta L^{n+1}(\delta  \delta
u_h^{n+1}).
\end{align}

Therefore, we easily have
$
\sum_{n=2}^{N-1}  \|\delta  \delta
u_h^{n+1}\|^2 =\sum_{i=1}^{5}S_i,
$
where
\begin{align*}
    &S_1 =-\tau \sum_{n=2}^{N-1} (C^{n-\frac{1}{2}}_h \delta \hat u_h^{n+1},\delta  \delta
u_h^{n+1})_{\Omega},  &S_2 = -\tau \sum_{n=2}^{N-1} (\delta C^{n+\frac{1}{2}}_h\hat u_h^{n+1},\delta  \delta
u_h^{n+1})_{\Omega}, \\
&S_3 =-\tau \sum_{n=2}^{N-1} (A_h \delta  \bar u_h^{n+1},\delta  \delta
u_h^{n+1})_{\Omega},  &S_4 =- \tau \sum_{n=2}^{N-1}(G_h \delta p_h^{n+1},\delta  \delta
u_h^{n+1})_{\Omega}, \\
&S_5 =\tau \sum_{n=2}^{N-1} \delta L^{n+1}(\delta  \delta
u_h^{n+1}).
\end{align*}
We start with an estimate of $S_2$. Using the Cauchy--Schwarz inequality, inverse inequality, and Young's inequality, 
\begin{align*}
   \tau (\delta C^{n+\frac{1}{2}}_h&\hat u_h^{n+1},\delta  \delta u_h^{n+1})_{\Omega} \\ &\leq \tau( |(\beta(t^{n+\frac{1}{2}})-\beta(t^{n-\frac{1}{2}})) \nabla \hat u_h^{n+1}, \delta  \delta
u_h^{n+1})_{\Omega}| \\ &+{ \tau \gamma_u  \Bigl|\sum_{F \in \mathcal{F}^{int}_h} \int_F h_F^2 |(\beta^{n+\frac{1}{2}}-\beta^{n-\frac{1}{2}}) \cdot n|
\jump{\nabla \hat u^{n+1}_h} : \jump{\nabla \delta  \delta u^{n+1}_h} ~\mbox{d}s}\Bigr|\\ &\leq  CCo^2 (4 \epsilon)^{-1} \tau \beta^2_{1, \infty}/(1+\beta_{\infty})^2 \norm{\hat u^{n+1}_h}^2 +\epsilon \norm{\delta  \delta u_h^{n+1}}^2.
\end{align*}
Taking the summation over $n$= 2, 3, ..., $N$-1,
\begin{align*}
 S_2 & \leq  Co C_{\gamma,\beta} \tau  \sum_{n=2}^{N-1}\norm{\hat u^{n+1}_h}^2 +\epsilon \sum_{n=2}^{N-1}\norm{\delta  \delta u_h^{n+1}}^2 \\
 & \leq  Co C_{\gamma,\beta} \tau  \sum_{n=2}^{N-1}\norm{u^{n}_h}^2 +\epsilon \sum_{n=2}^{N-1}\norm{\delta  \delta u_h^{n+1}}^2.
\end{align*}
The term $S_3$ is handled by using the Cauchy--Schwarz inequality, (\ref{invA}) and  $Re>1$,
\begin{align*}
  S_3 \le \tau\sum_{n=2}^{N-1} \norm{\delta  \bar u_h^{n+1}}_A \norm{\delta  \delta
u_h^{n+1}}_A & \le C\tau\sum_{n=2}^{N-1} \norm{ \delta  \bar u_h^{n+1}}_A \frac{\sqrt{\mu}}{h}\norm{\delta  \delta
u_h^{n+1}} \\ &\le  C\sqrt{\tau}\sum_{n=2}^{N-1} \norm{ \delta  \bar u_h^{n+1}}_A  \left(\frac{Co}{Re}\right)^{\frac{1}{2}}\norm{\delta  \delta
u_h^{n+1}} \\ & \leq   C Co \epsilon^{-1} {\Tnorm{\bar u_h, 0}^2} + \epsilon \sum_{n=2}^{N-1}\norm{\delta  \delta u_h^{n+1}}^2.
\end{align*}
For $S_4$,
\begin{align*}
S_4 = - \tau \sum_{n=2}^{N-1}(G_h \delta p_h^{n+1},\delta  \delta
u_h^{n+1})_{\Omega} = \tau \sum_{n=2}^{N-1}( \delta p_h^{n+1},G_h^* \delta  \delta
u_h^{n+1})_{\Omega} \\
= \tau \sum_{{n=3}}^{N-1}\delta \delta \tilde L^{n+1}(\delta p_h^{n+1})-\tau \sum_{n=3}^{N-1}s_p( \delta p_h^{n+1}, \delta  \delta
p_h^{n+1})_{\Omega} + \tau (\delta p_h^{3},G_h^*  \delta  \delta
u_h^{3})_{\Omega}.
\end{align*}
For the first term on the right hand side we note that using Lemma \ref{lem:sum_by_parts_spec} there holds
\[
 \sum_{n=3}^{N-1}\delta \delta \tilde L^{n+1}(\delta p_h^{n+1}) = - \sum_{n=4}^{N-1} \delta \tilde L^n(\delta \delta p_h^{n+1}) + \delta \tilde L^N(\delta p^N) - \delta \tilde L^3(\delta p^4),
\]
and hence
$
\tau \sum_{n=3}^{N-1}\delta \delta \tilde L^{n+1}(\delta p_h^{n+1}) \leq
\epsilon |(0,p_h)|_{\delta\delta} + \tau\sum_{n=2}^{N}\norm{\tilde{L}^{n}}^{2}_{\mathrm{div}}.
$

Now consider the term
\begin{align*}
(\delta p_h^{3},G_h^*  \delta  \delta
u_h^{3})_{\Omega} &= (\delta p_h^{3},G_h^*  (\delta u_h^3- \delta u_h^2))_{\Omega}\\&=(\delta p_h^{3},G_h^*  \delta u_h^3)_{\Omega}-(\delta p_h^{3},G_h^*    u_h^2)_{\Omega}+(\delta p_h^{3},G_h^*    u_h^1)_{\Omega} \\ &= {\tilde{L}^3(\delta p^3_h)}-s_p(\delta p_h^{3},\delta p_h^3) -{2 \tilde{L}^2(\delta p^3_h)}+  s_p(\delta p_h^{3},p_h^2) + \underbrace{(\delta p_h^{3},G_h^* u_h^1)}_{\tilde L^1(\delta p_h^3)}.
\end{align*}
\[{\tilde{L}^3(\delta p^3_h)-2 \tilde{L}^2(\delta p^3_h)} \le \epsilon^{-1}(\norm{\tilde{L}^2}^2_{\mathrm{div}}+\norm{\tilde{L}^3}^2_{\mathrm{div}})+\epsilon|\delta p^3_h|^2_{s_p}.\]
\[
s_p(\delta p_h^{3},p_h^2) = s_p(p_h^{3}-p_h^2,p_h^2) = \frac 12 s_p(p_h^{3},p_h^{3}) - \frac12 s_p(\delta p_h^{3},\delta p_h^3) -\frac 12 s_p(p_h^{2},p_h^{2}).
\]
Now note that since $(a - b) a = \frac12 (a^2 + (a-b)^2 - b^2)$ we have using telescoping property that 
 \begin{align*}
 \sum_{n=3}^{N-1}s_p(\delta  \delta p_h^{n+1},\delta p_h^{n+1} ) &= \frac12 s_p(\delta p_h^{N},\delta p_h^{N} ) + \frac12 \sum_{n=3}^{N-1}s_p(\delta  \delta p_h^{n+1},\delta \delta p_h^{n+1} ) - \frac12 s_p(\delta p_h^{3},\delta p_h^{3} ) \\ &={ \frac12 |\delta p_h^{N}|^2_{s_p}+ \frac12 \sum_{{n=3}}^{N-1}|\delta  \delta p_h^{n+1}|^2_{s_p} - \frac12 |\delta p_h^{3}|^2_{s_p}}.
 \end{align*}
Collecting terms we see that
\begin{align*}
-\tau \sum_{n=3}^{N-1}s_p( \delta p_h^{n+1}, \delta  \delta
p_h^{n+1})_{\Omega}  & + \tau (\delta p_h^{3},G_h^*  \delta  \delta
u_h^{3})_{\Omega} 
 \leq -\tau \frac12 |\delta p_h^{N}|^2_{s_p} -\tau \frac12 \sum_{n=3}^{N-1}|\delta  \delta p_h^{n+1}|^2_{s_p}   \\ & -\tau|\delta p_h^{3}|^2_{s_p}+ \tau \frac12 |p_h^3|_{s_p}^2- \tau \frac12 |p_h^2|_{s_p}^2   +\tau L^1_{div}(\delta p_h^3).
\end{align*}
{
To control the $s_p$ terms note that for all $\varepsilon>0$ there holds
\begin{align*}
 -|\delta p_h^{3}|^2_{s_p} + \frac12 |p_h^3|_{s_p}^2- \frac12 |p_h^2|_{s_p}^2  
 & = -|p_h^3|_{s_p}^2 -|p_h^2|_{s_p}^2 + 2 s_p(p_h^3,p_h^2)+ \frac12 |p_h^3|_{s_p}^2- \frac12 |p_h^2|_{s_p}^2 \\
 & = -\frac12 |p_h^3|_{s_p}^2 -\frac32 |p_h^2|_{s_p}^2 + 2 s_p(p_h^3,p_h^2) \\
 & \leq -\Bigl(\frac12 - \frac{1}{\varepsilon}\Bigr)  |p_h^3|_{s_p}^2 -\Bigl(\frac32 - \varepsilon \Bigr) |p_h^2|_{s_p}^2.
\end{align*}
Here we used that
$
2 s_p(p_h^3,p_h^2) \leq \frac{1}{\epsilon} |p_h^3|_{s_p}^2 + \epsilon |p_h^2|_{s_p}^2
$
and we see that with $\varepsilon =3$, $$-\Bigl(\frac12 - \frac{1}{\varepsilon}\Bigr)  |p_h^3|_{s_p}^2 -\Bigl(\frac32 - \varepsilon \Bigr) |p_h^2|_{s_p}^2\leq  -\frac{1}{6} |p_h^3|_{s_p}^2 + \frac32 |p_h^2|_{s_p}^2. $$
}
It follows that
\begin{align*}
\tau \sum_{n=3}^{N-1}\delta \delta \tilde L^{n+1}(\delta p_h^{n+1}) & -\tau \sum_{n=3}^{N-1}s_p( \delta p_h^{n+1}, \delta  \delta
p_h^{n+1})_{\Omega} + \tau (\delta p_h^{3},G_h^*  \delta  \delta
u_h^{3})_{\Omega} 
\\ &\leq -\left(\frac12 - \epsilon\right) |(0,p_h)|_{\delta\delta}^2   + \frac32 |p_h^2|_{s_p}^2   + C \tau\sum_{n=1}^{N}\norm{\tilde{L}^{n}}^{2}_{\mathrm{div}}.
\end{align*}


The term $S_5$ is handled by using (\ref{enery_est}),
\begin{align*}
  S_5 &\le  \sum_{n=2}^{N-1} \tau  \norm{L^{n+1}}_h(\norm{\delta  \delta u_h^{n+1}}_E^2+\norm{\delta  \delta u_h^{n+1}}^2)^\frac{1}{2}\\ & \leq  \sum_{n=2}^{N-1} \tau^{\frac{1}{2}}\norm{L^{n+1}}_h(C_E+\tau+1)^{\frac{1}{2}}\norm{\delta  \delta u_h^{n+1}} \\&\leq  C \tau \epsilon^{-1} \sum_{n=2}^{N-1}\norm{L^{n+1}}^2_h  + \epsilon \sum_{n=2}^{N-1}\norm{\delta  \delta u_h^{n+1}}^2.
\end{align*}
It remains to bound the $S_1$ term. To do this
observe that
\begin{align*}
   S_1 &=-\tau \sum_{n=2}^{N-1} (C^{n-\frac{1}{2}}_h \delta \hat u_h^{n+1},\delta  \delta
u_h^{n+1})_{\Omega} \\ &=-\frac{3}{2}\tau \sum_{n=2}^{N-1} (C^{n-\frac{1}{2}}_h \delta  u_h^{n},\delta  \delta
u_h^{n+1})_{\Omega}+\frac{1}{2}\tau \sum_{n=2}^{N-1} (C^{n-\frac{1}{2}}_h \delta u_h^{n-1},\delta  \delta
u_h^{n+1})_{\Omega}.
\end{align*}
Consider the first term on the right hand side.
\begin{align*}
  -\tau (C^{n-\frac{1}{2}}_h \delta  u_h^{n},\delta  \delta
u_h^{n+1})_{\Omega} =T^{n}_1+T^{n}_2+T^{n}_3+T^{n}_4,
\end{align*}
where
\begin{align*}
    T^{n}_1:&=-\tau^2 ( C^{n-\frac{1}{2}}_h L_h^n,\delta  \delta
u_h^{n+1}),\  \quad 
&T^{n}_3:=\tau^2 (C^{n-\frac{1}{2}}_hA_h^n \bar u^n_h,\delta  \delta
u_h^{n+1}),\\
T^{n}_2:&=\tau^2 (C^{n-\frac{1}{2}}_hC_h^{n-\frac{1}{2}} \hat u^n_h,\delta  \delta
u_h^{n+1}),\  \quad
&T^{n}_4:=\tau^2 (C^{n-\frac{1}{2}}_hG_h^n p^n_h,\delta  \delta
u_h^{n+1}).
\end{align*}
Using inverse estimates (\ref{estimate_Ch}) and (\ref{enery_est}) we obtain
\begin{align*}
   T^{n}_1 & \le C {\tau}^2 h^{-1}_{\beta}  \norm{L^n}_h(\norm{\delta  \delta
u_h^{n+1}}^2_E+\norm{\delta  \delta
u_h^{n+1}}^2)^{\frac{1}{2}} \\  &\le C \sqrt{\tau} (4 \epsilon)^{-1} Co \norm{L^n}_h(C_E+\tau)^{\frac{1}{2}}\norm{\delta  \delta u_h^{n+1}}\leq \tau Co^2 \norm{L^n}^2_h +\epsilon  \norm{\delta  \delta u_h^{n+1}}^2.
\end{align*}
To estimate $T^{n}_2$ we apply \eqref{estimate_Ch} twice to obtain
\begin{align}\nonumber
 T^{n}_2  \leq C \frac{\tau^2}{h_\beta^2} \norm{\hat u_h^n} \norm{\delta  \delta u_h^{n+1}} 
 & \leq C (4 \epsilon)^{-1} \left( \frac{\tau}{h_\beta}\right)^4\norm{\hat u_h}^2 + \epsilon\norm{\delta  \delta u_h^{n+1}}^2 \\
  & \leq C (4 \epsilon)^{-1} \tau Co_{4/3}^3 \norm{\hat u_h}^2 + \epsilon\norm{\delta  \delta u_h^{n+1}}^2.\label{s1_est}
 \end{align}

Using (\ref{invA}) and (\ref{estimate_Ch}) we obtain
\begin{align*}
  T^{n}_3 \le \tau^2 \norm{\bar u^n_h}_A \norm{C^{(n-\frac{1}{2})*}_h \delta  \delta
u_h^{n+1}}_A &\le C\tau^2 \sqrt{\mu} \beta_\infty^{-1} h^{-2}_{\beta} \norm{\bar u^n_h}_A  \norm{\delta  \delta
u_h^{n+1}}  \\
& \leq C (4 \epsilon)^{-1} Co^3 Re^{-1} \beta^{-1}_\infty \tau \norm{\bar u^n_h}_A^2 + \epsilon \norm{\delta  \delta
u_h^{n+1}}^2.
\end{align*}
Finally, again using the inverse estimate (\ref{estimate_Ch}) we obtain
\begin{align}\label{eq_gp}
    T^{n}_4\le \tau^2 \norm{C^{n-\frac{1}{2}}_hG_h^n p^n_h} \norm{\delta  \delta
u_h^{n+1}} &\le \tau^2 h^{-1}_{\beta}\norm{G_h^n p^n_h} \norm{\delta  \delta
u_h^{n+1}}\nonumber \\&\le \epsilon^{-1}\tau^4 h^{-2}_{\beta}\norm{G_h^n p^n_h}^2+ \epsilon\norm{\delta  \delta
u_h^{n+1}}^2.
\end{align}
Using Lemma \ref{Ghlemma_CN} and the fact that $Re>1$,  we obtain (with the constant depending on $\epsilon$),
\begin{align}
\sum^{N-1}_{n=2}T^{n}_4 &\leq  C \tau \Bigl(  Co^3 \sum^{N-1}_{n=2} \| \bar u_h^{n}\|_A^2 + Co_{4/3}^3  \sum^{N-1}_{n=2}\| \hat u_h^{n}\|^2  +Co^2\sum^{N-1}_{n=1}  \norm{\tilde{L}^{n}}^2_{\mathrm{div}}  \nonumber\\&  + Co^2\sum^{N-1}_{n=2} \|L^{n}\|_h^2  \Bigr)+\epsilon \sum^{N-1}_{n=2}\norm{\delta  \delta
u_h^{n+1}}^2. 
\end{align}
Hence,  with $Co_{\max}=\max\{Co, Co_{4/3}\} \leq 1$, and use that $\norm{\hat u^{n}_h} \lesssim 3/2 \norm{u^{n-1}_h} + 1/2 \norm{u^{n-2}_h}$
\begin{align}\label{eq_240622}
    \frac{3}{2}\tau \sum_{n=2}^{N-1} (C^{n-\frac{1}{2}}_h \delta  u_h^{n}&,\delta  \delta
u_h^{n+1})_{\Omega}   \leq 4 \epsilon \sum_{n=2}^{N-1}\norm{\delta  \delta u_h^{n+1}}^2
     + Co_{\max} \tau \sum_{n=1}^{N-2} \norm{ u^{n}_h}^2 \\&
        + C Co \Bigl( \Tnorm{\bar u_h,  p_h}^2\nonumber+ \tau\sum_{n=2}^{N-1}\norm{L^{n}}^{2}_h+\tau\sum^{N-1}_{n=1}  \norm{\tilde{L}^{n}}^2_{\mathrm{div}} \Bigr).
\end{align}
Similarly, we can derive the same estimate for $\frac{1}{2}\tau \sum_{n=2}^{N-1} (C^{n-\frac{1}{2}}_h \delta  u_h^{n-1},\delta  \delta
u_h^{n+1})_{\Omega}$ and so we get
\begin{align*}
     S_1 &\leq 8 \epsilon \sum_{n=2}^{N-1}\norm{\delta  \delta u_h^{n+1}}^2
     +2  Co_{\max} \tau \sum_{n=1}^{N-2} \norm{u^{n}_h}^2 \\&
        + 2 C Co \Bigl( \Tnorm{\bar u_h,  p_h}^2\nonumber+ \tau\sum_{n=2}^{N-1}\norm{L^{n}}^{2}_h+\tau\sum^{N-1}_{n=3}  \norm{\tilde{L}^{n}}^2_{\mathrm{div}}+\tau \norm{\tilde{L}^{1}}^2_{\mathrm{div}} \Bigr).
\end{align*}
If we combine the estimates of terms $S_i$ and take $\epsilon$ small enough we obtain (\ref{eq_1212}).

Let us now prove the estimate (\ref{eq_1212_1}). Note that we wish to eliminate the $Co_{4/3}$ dependence that appeared through the estimation of $S_1$. Therefore we once again consider $S_1$ but this time we add and subtract the piecewise constant function $\pi_0 \delta \hat u_h^{n}$,
\begin{align*}
    -&\tau (C^{n-\frac{1}{2}}_h \delta \hat u_h^{n+1},\delta  \delta
u_h^{n+1})_{\Omega} \\ &=\underbrace{-\tau (C^{n-\frac{1}{2}}_h (\delta \hat u_h^{n+1}-\pi_0 \delta \hat u_h^{n+1}),\delta  \delta u_h^{n+1})_{\Omega}}_{I}-\underbrace{\tau(\beta_{\infty} \pi_0 \delta \hat u_h^{n+1} \cdot n,\delta  \delta u_h^{n+1} \cdot n)_{\partial \Omega}}_{II}.
\end{align*}
The first term is handled by using the Cauchy--Schwarz inequality, inverse estimates \ref{estimate_Ch} and Young's inequality, 
\begin{align*}
    I \lesssim Co   \norm{\delta \hat u_h^{n+1}-\pi_0 \delta \hat u_h^{n+1}}^2+ Co \norm{\delta  \delta u_h^{n+1}}^2.
\end{align*}
The second term can be written as:
\begin{align}\label{57ii}
    II=& \tau(\beta_{\infty} (\pi_0 \delta \hat u_h^{n+1}-\delta \hat u_h^{n+1}) \cdot n,\delta  \delta u_h^{n+1} \cdot n)_{\partial \Omega}\\
    &+\tau(\beta_{\infty} (\delta \hat u_h^{n+1} - \delta \bar u_h^{n+1}) \cdot n,\delta  \delta u_h^{n+1} \cdot n)_{\partial \Omega}  + \tau(\beta_{\infty} (\delta \bar u_h^{n+1} \cdot n,\delta  \delta u_h^{n+1} \cdot n)_{\partial \Omega}. \nonumber
\end{align}
Using the Cauchy--Schwarz inequality, trace inequality \ref{tracePk} and the Young's inequality in the first term on the right hand side of \ref{57ii} we obtain
\begin{align*}
-\tau(\beta_{\infty} (\pi_0 \delta \hat u_h^{n+1}-\delta \hat u_h^{n+1}) \cdot n,&\delta  \delta u_h^{n+1} \cdot n)_{\partial \Omega}  \\&\le \tau \norm{\beta_{\infty} (\pi_0 \delta \hat u_h^{n+1}-\delta \hat u_h^{n+1}) \cdot n}_{\partial \Omega} \norm{\delta  \delta u_h^{n+1} \cdot n}_{\partial \Omega} \\ &\le \tau \beta_{\infty} h^{-1}\norm{ \pi_0 \delta \hat u_h^{n+1}-\delta \hat u_h^{n+1} } \norm{\delta  \delta u_h^{n+1} } \\  &\lesssim Co \norm{ \pi_0 \delta \hat u_h^{n+1}-\delta \hat u_h^{n+1} }^2 + Co \norm{\delta  \delta u_h^{n+1} }^2.
\end{align*}
The second term is handled using the relation \eqref{eq:hat_bound},
\begin{align*}
    \tau (\beta_{\infty} (\delta \hat u_h^{n+1} - \delta \bar u_h^{n+1} )\cdot n,& \delta  \delta u_h^{n+1} \cdot n)_{\partial \Omega}\\ &\le \tau \beta_{\infty} ( \norm{ \delta \delta u_h^{n+1} \cdot n}_{\partial \Omega} + \norm{ \delta \delta u_h^{n} \cdot n}_{\partial \Omega} ) \norm{\delta  \delta u_h^{n+1} \cdot n}_{\partial \Omega} \\ &\le C\beta_\infty \tau h^{-1}(\norm{\delta  \delta u_h^{n+1} }^2+ \norm{\delta  \delta u_h^{n} }) \\ &\le C Co(\norm{\delta  \delta u_h^{n+1} }^2+\norm{\delta  \delta u_h^{n} }^2).
\end{align*}
Finally the third term can be bounded as follows 
\begin{align*}
\tau \beta_{\infty} (\delta \bar u_h^{n+1} \cdot n,\delta \delta u_h^{n+1} \cdot n)_{\partial \Omega} & \leq Co^{\frac12} \tau^{\frac12} (|\bar u_h^{n+1}|_{s_u} + |\bar u_h^{n}|_{s_u}) \|\delta  \delta u_h^{n+1}\| \\
& \lesssim Co^{\frac12}  \tau (|\bar u_h^{n+1}|_{s_u}^2+ |\bar u_h^{n}|_{s_u}^2) + Co^{\frac12}  \|\delta  \delta u_h^{n+1}\|^2.
\end{align*}
Thus,
\[
    S_1 \lesssim  Co\sum^{N-1}_{n=2} \norm{\delta  u_h^{n+1}-\pi_0\delta  u_h^{n+1}}^2+  Co^{\frac12} \sum^N_{n=2}\norm{\delta \delta u_h^n}^2 + Co^{\frac12} \sum^N_{n=2} |\bar u_h^n|_{s_u}^2. 
\]
If we combine this bound with the previous estimates of $S_i$, $i=2,...,5$ and choose $Co$ sufficiently small then we obtain (\ref{eq_1212_1}).
\end{proof}
 
\begin{lemma}\label{lem:pres_s_inc}
There holds for all $\epsilon > 0$,
\[
\tau \sum_{n=2}^{N-1}  s_p(\delta p_h^{n+1},\delta p_h^{n+1}) \leq  \frac{\epsilon}{2} \tau \sum_{n=2}^{N-1} |p_h^{n+1}|_{s_p}^2 + \epsilon^{-1}  |(0, p_h^{n+1})|_{\delta\delta}^2 + (1+\epsilon^{-1}) \tau |p_h^{2}|_{s_p}^2.
\]
\end{lemma}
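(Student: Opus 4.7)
The plan is to reduce the statement to a discrete integration-by-parts (Abel summation) identity together with careful applications of Young's inequality, in direct analogy with the continuous estimate $\int_0^T \dot p^2\,dt = [p\dot p]_0^T - \int_0^T p\,\ddot p\,dt$, but adapted to the semi-norm $|\cdot|_{s_p}$ and the starting index $n=2$ imposed by the scheme.

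First I would write $|\delta p_h^{n+1}|_{s_p}^2 = s_p(\delta p_h^{n+1}, p_h^{n+1}-p_h^n)$ and perform a summation-by-parts over $n=2,\dots,N-1$ to move the difference onto the second argument. The resulting identity, which is a straightforward consequence of relabelling the index in $\sum s_p(\delta p_h^{n+1}, p_h^n)$, reads
\[
\sum_{n=2}^{N-1}|\delta p_h^{n+1}|_{s_p}^2 \;=\; s_p(p_h^{N},\delta p_h^{N}) - s_p(p_h^{2},\delta p_h^{3}) - \sum_{n=3}^{N-1} s_p(p_h^{n},\delta\delta p_h^{n+1}).
\]
Note that the interior sum contains exactly the second-difference quantities tracked by $|(0,p_h)|_{\delta\delta}^2$, while the two boundary terms produce the $|\delta p_h^N|_{s_p}^2$ factor (also present in $|(0,p_h)|_{\delta\delta}^2$) and a $|p_h^2|_{s_p}^2$ factor, matching the structure of the right-hand side of the claim.

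Next I would apply Cauchy--Schwarz and Young's inequality with parameters tuned so as to produce the three contributions on the right-hand side of the lemma. Taking Young with parameter proportional to $\epsilon$ on $s_p(p_h^N,\delta p_h^N)$ and on each $s_p(p_h^n,\delta\delta p_h^{n+1})$ yields an $O(\epsilon)$ coefficient on $|p_h^N|_{s_p}^2$ and on $\sum_{n=3}^{N-1}|p_h^n|_{s_p}^2$, together with an $O(\epsilon^{-1})$ coefficient on $|\delta p_h^N|_{s_p}^2$ and on $\sum |\delta\delta p_h^{n+1}|_{s_p}^2$. The key observation here is that $|p_h^N|_{s_p}^2 + \sum_{n=3}^{N-1}|p_h^n|_{s_p}^2 = \sum_{n=2}^{N-1}|p_h^{n+1}|_{s_p}^2$, so these pressure contributions combine into exactly $\frac{\epsilon}{2}\tau\sum_{n=2}^{N-1}|p_h^{n+1}|_{s_p}^2$. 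The start-boundary term $s_p(p_h^2,\delta p_h^3)$ is handled by a separate Young inequality producing $|p_h^2|_{s_p}^2$ (with the $(1+\epsilon^{-1})$ coefficient coming from the mixed use of the two Young scales) plus a multiple of $|\delta p_h^3|_{s_p}^2$ that is strictly less than $1$.

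Finally I would absorb the residual $|\delta p_h^3|_{s_p}^2$ back into the left-hand side: since $|\delta p_h^3|_{s_p}^2$ is the $n=2$ term of $\sum_{n=2}^{N-1}|\delta p_h^{n+1}|_{s_p}^2$ with coefficient one, choosing the Young parameter in the start-boundary estimate smaller than one makes this absorption legitimate. After re-introducing the factor $\tau$, the combinations $\tau|\delta p_h^N|_{s_p}^2+\tau\sum_{n=3}^{N-1}|\delta\delta p_h^{n+1}|_{s_p}^2$ are recognised as $|(0,p_h)|_{\delta\delta}^2$ and the inequality follows. The main obstacle is the bookkeeping at the start boundary: one has to ensure that the uncontrolled initial value $\delta p_h^2$ never enters (hence the Abel identity is designed to jump from $n=2$ directly to $\delta p_h^3$) and that the $|\delta p_h^3|_{s_p}^2$ produced by Young's inequality on $s_p(p_h^2,\delta p_h^3)$ can be absorbed without degrading the $\epsilon$-dependence of the remaining constants.
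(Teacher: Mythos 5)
Your proposal is correct and follows essentially the same route as the paper's proof: the same Abel summation identity (with the interior sum carrying $p_h^n$ against $\delta\delta p_h^{n+1}$ and boundary terms $s_p(p_h^N,\delta p_h^N)$ and $-s_p(p_h^2,\delta p_h^3)$), followed by Cauchy--Schwarz and $\epsilon$-Young to recognise $|(0,p_h)|_{\delta\delta}^2$ and $\sum_n|p_h^{n+1}|_{s_p}^2$. The only difference is at the start boundary: the paper expands $-s_p(p_h^2,\delta p_h^3)=s_p(p_h^2,p_h^2-p_h^3)$ and Youngs the cross term against $|p_h^3|_{s_p}^2$ (already present on the right), avoiding any absorption, whereas your absorption of a $\theta|\delta p_h^3|_{s_p}^2$ residual into the left-hand side is valid but multiplies the right-hand side by $(1-\theta)^{-1}$ and hence does not reproduce the exact coefficient $\epsilon^{-1}$ on the $|(0,p_h)|_{\delta\delta}^2$ term --- harmless here, since the lemma is only invoked up to generic constants.
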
 
 \begin{proof}
Observe that using Lemma \ref{lem:sum_by_parts_spec} we have
\[
\sum_{n=2}^{N-1}  s_p(\delta p_h^{n+1},\delta p_h^{n+1}) = -\sum_{n=3}^{N-1}  s_p( p_h^{n+1},\delta \delta p_h^{n+1}) + s_p( p_h^{N},\delta p_h^{N}) - s_p( p_h^{2},\delta p_h^{3}).
\]
It follows that using Cauchy-Schwarz inequality followed by Young's inequality, for any $\epsilon>0$,
\begin{align*}
\sum_{n=2}^{N-1}  s_p(\delta p_h^{n+1},\delta p_h^{n+1}) \leq 
\frac{\epsilon}{4} \sum_{n=3}^{N-1}  s_p( p_h^{n+1}, p_h^{n+1})
+ \epsilon^{-1} \sum_{n=3}^{N-1}  s_p( \delta \delta p_h^{n+1}, \delta \delta p_h^{n+1}) \\
+\frac{\epsilon}{4} s_p(   p_h^{N},  p_h^{N})+ \epsilon^{-1} s_p( \delta  p_h^{N}, \delta p_h^{N}) - s_p( p_h^{2},\delta p_h^{3}).
\end{align*}
Consider the last term of the right hand side
\[
- s_p( p_h^{2},\delta p_h^{3}) =  s_p( p_h^{2},p_h^2 - p_h^{3}) \leq \frac{\epsilon}{4} s_p( p_h^{3},p_h^3) + (1+\epsilon^{-1}) s_p( p_h^{2},p_h^2). 
\]
 \end{proof}
 
 We can now establish the stability results of the extrapolated Crank-Nicolson scheme. Compared to the BDF2 instance, the Crank-Nicolson scheme requires careful modification, as it lacks the same favourable dissipation characteristics as the BDF2 approach.
  \begin{theorem}
Suppose that $k\ge 1$, $Re >1$ and $Co_{\max}={\max} \{Co,Co_{4/3}\}$ is sufficiently small only
depending on geometric constants of the mesh and the parameters $\gamma$ and $\gamma_u$. Let $T=N \tau$. For $\{u^n_h\}$
 solving (\ref{eq:momentum_CN})--(\ref{eq:mass_CN}) we have the
following bound:
\begin{align}\label{th_CN_1}
  \max_{2 \leq n \leq N} \norm{u^{n}_h}^{2}   \leq C(\norm{u^{0}_h}^2+\norm{u^{1}_h}^2&
  +{\tau \sum_{n=1}^{N-1} \norm{ L^{n+1}}_h^2}+ \tau\sum_{n=1}^{N}\norm{\tilde{L}^{n}}^{2}_{\mathrm{div}})M,
\end{align}
  where $M$ is given by 
\begin{align} \label{th2_m}
    M= C\left(1+T (1+ \phi) \e^{T(1+\phi)} \right), \quad \ \text{and} \ \phi =(1+\frac{ \beta_{1,\infty}}{1+\beta_\infty}).
\end{align}
In addition, it holds
\begin{align}\label{th1p2}
 { \Tnorm{\bar u_h,p_h}^2 }  \leq  \phi T \big(\|u_h^0\|^2 &+ \| u_h^1\|^2+
\tau \sum_{n=1}^{N-1}\norm{L^{n+1}}_h^2+ \tau\sum_{n=1}^{N}\norm{\tilde{L}^{n}}^{2}_{\mathrm{div}}\big)M.
\end{align}
\end{theorem}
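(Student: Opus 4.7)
The plan is to run a standard discrete energy argument for the Crank-Nicolson scheme, combined with the second-order-increment control from Lemma~\ref{lem:time_diss}, the pressure bound from Lemma~\ref{Ghlemma_CN}, and the pressure-increment bound from Lemma~\ref{lem:pres_s_inc}, closing with the discrete Gronwall inequality (Proposition~\ref{Gronwall}).

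First I test the momentum equation \eqref{eq:momentum_CN} with $\tau\bar u_h^{n+1}$ and the mass equation \eqref{eq:mass_CN} with $\tau p_h^{n+1}$ and add. The time-derivative term produces the telescoping identity $(\delta u_h^{n+1},\bar u_h^{n+1})_\Omega = \tfrac12(\|u_h^{n+1}\|^2 - \|u_h^n\|^2)$; the viscous term yields $\tau\|\bar u_h^{n+1}\|_A^2$ and the pressure stabilization yields $\tau|p_h^{n+1}|_{s_p}^2$. The pressure coupling does not cancel exactly because the momentum is tested at the averaged level: using \eqref{Gh} the residual is $\tfrac12(G_h^*\delta u_h^{n+1}, p_h^{n+1})_\Omega$, which after summation by parts in $n$ (Lemma~\ref{lem:sum_by_parts_spec}) and substitution of \eqref{eq:mass_CN} reduces to $s_p$-increments and boundary contributions $\tilde L^{n}(\cdot)$ that are absorbed by means of Lemma~\ref{lem:pres_s_inc}.

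For the convective term I use the splitting \eqref{eq:hat_bound} to write
\[
(C_h^{n+\frac12}\hat u_h^{n+1},\bar u_h^{n+1})_\Omega = |\bar u_h^{n+1}|_{s_u}^2 - \tfrac12 (C_h^{n+\frac12}\delta\delta u_h^{n+1},\bar u_h^{n+1})_\Omega.
\]
The first term combines with the viscous term to give $\tau\|\bar u_h^{n+1}\|_E^2$; the extrapolation remainder is estimated using \eqref{estimate_Ch} and Young's inequality as $C\,Co\,\|\delta\delta u_h^{n+1}\|\,\|\bar u_h^{n+1}\|$. Summation yields a term of the form $Co^{1/2}|(u_h,p_h)|_{\delta\delta}^2 + Co^{3/2}\sum_n \|\bar u_h^{n+1}\|^2$, and plugging in the bound \eqref{eq_1212_1} of Lemma~\ref{lem:time_diss} produces only a factor $Co^{1/2}\Tnorm{\bar u_h,p_h}^2$ together with data contributions and a $Co\,C_{\gamma,\beta}\tau\sum\|u_h^n\|^2$ term. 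Provided $Co_{\max}$ is sufficiently small, these second-order-increment contributions are absorbed into the positive $\tau\sum\|\bar u_h^{n+1}\|_E^2 + \tau\sum|p_h^{n+1}|_{s_p}^2$ on the left, and the remaining terms collapse to
\[
\tfrac12\|u_h^N\|^2 + c\,\Tnorm{\bar u_h,p_h}^2 \leq C\Bigl(\|u_h^0\|^2 + \|u_h^1\|^2 + \tau\sum_{n=1}^{N-1}\|L^{n+1}\|_h^2 + \tau\sum_{n=1}^{N}\|\tilde L^{n}\|_{\mathrm{div}}^2\Bigr) + C(1+\phi)\tau\sum_{n=1}^{N-1}\|u_h^n\|^2,
\]
where the factor $\phi = 1 + \beta_{1,\infty}/(1+\beta_\infty)$ arises from the $\delta C_h^{n+1/2}$ estimate in Lemma~\ref{lem:sum_by_parts_spec}. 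An application of Proposition~\ref{Gronwall} with $\psi$ equal to the data and $\eta = C(1+\phi)\tau$, followed by taking the maximum over $n \leq N$, yields \eqref{th_CN_1}, and reinserting this bound into the absorbed energy sum gives \eqref{th1p2} with the factor $M$ of \eqref{th2_m}.

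The main obstacle is the extrapolation remainder $(C_h^{n+\frac12}\delta\delta u_h^{n+1},\bar u_h^{n+1})_\Omega$: unlike the BDF2 case treated in \cite{BGG23}, Crank-Nicolson carries no intrinsic numerical dissipation to damp second-order increments, so closure depends entirely on the carefully tuned estimate \eqref{eq_1212_1}. Since that estimate itself contains $Co^{1/2}\Tnorm{\bar u_h,p_h}^2$, the argument is only self-consistent under the smallness assumption on $Co_{\max}$; one must additionally check that the many cross terms between $G_h p_h$ and $\delta u_h$, the $s_p$-increments produced by Lemma~\ref{lem:pres_s_inc}, and the boundary piece of $s_u$ all carry prefactors that remain absorbable into the positive left-hand side after the Gronwall step is applied.
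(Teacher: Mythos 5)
Your overall architecture matches the paper's: test \eqref{eq:momentum_CN} with $\bar u_h^{n+1}$, reduce the pressure coupling to $s_p$-increments handled by Lemma~\ref{lem:pres_s_inc}, control $|(u_h,p_h)|_{\delta\delta}$ via Lemma~\ref{lem:time_diss}, and close with Gronwall. However, there is a genuine gap in your treatment of the extrapolation remainder $\tfrac12\tau\sum_n(C_h^{n+\frac12}\delta\delta u_h^{n+1},\bar u_h^{n+1})_\Omega$, which is precisely the crux of the Crank--Nicolson case. Estimating it directly by \eqref{estimate_Ch} and Young gives, as you write, $Co^{1/2}\sum_n\|\delta\delta u_h^{n+1}\|^2 + Co^{3/2}\sum_n\|\bar u_h^{n+1}\|^2$. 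The second term carries no factor of $\tau$: since $Co^{3/2}=Co^{1/2}(\beta_\infty+1)h^{-1}\tau$, the effective Gronwall rate is $Co^{1/2}(\beta_\infty+1)h^{-1}$, which is unbounded as $h\to 0$, and no choice of the Young parameter repairs this (pushing weight onto the $\|\delta\delta u_h^{n+1}\|^2$ side forces a coefficient too large to be absorbed after Lemma~\ref{lem:time_diss} is applied, since that lemma returns $Co\,\Tnorm{\bar u_h,p_h}^2$). This is exactly why the paper does \emph{not} use Cauchy--Schwarz here: it first applies the summation by parts of Lemma~\ref{lem:sum_by_parts_spec} to move the increment onto $\bar u_h$, then Lemma~\ref{Chlemma2} to extract the nonnegative piece $\sum_n|\delta\bar u_h^{n+1}|_{s_u}^2$, and finally estimates the residual $\Xi=\tau\sum_n(C_h^{n-\frac12}\delta\delta u_h^{n+1},\delta\bar u_h^{n+1})_\Omega$ by substituting the scheme itself for the increment (the $T^n_1,\dots,T^n_4$ decomposition), so that every surviving term either carries a $\tau$ weight suitable for Gronwall or a small $Co$ prefactor in front of $\Tnorm{\bar u_h,p_h}^2$ or $\sum_n\|\delta\delta u_h^{n+1}\|^2$. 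Without this step your argument does not close.

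A second, smaller misstep: for the theorem as stated (general $k\ge1$ under smallness of $Co_{\max}=\max\{Co,Co_{4/3}\}$) you must invoke estimate \eqref{eq_1212} of Lemma~\ref{lem:time_diss}, not \eqref{eq_1212_1}. The latter contains the term $Co^{1/2}\sum_{n}\|\delta u_h^{n+1}-\pi_0\delta u_h^{n+1}\|^2$, which again has no $\tau$ factor and is only controlled by Lemma~\ref{lemmaY_1}, valid for $k=1$; it is reserved for the improved piecewise-affine theorem. You should also note explicitly how the $Co(\|u_h^N\|^2+\|u_h^{N-1}\|^2+\|u_h^2\|^2)$ terms are absorbed (the paper's argument with the maximizing index $n^*$) and where $\|u_h^0\|^2$ enters the final bound (through the separate estimate of $\tau|p_h^2|_{s_p}^2$ obtained by testing the first time step).
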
 
\begin{proof}
Multiplying \eqref{eq:momentum_CN} with $\bar u_h^{n+1}=
(u_h^{n+1}+u_h^{n})/2$, integrating and summing over $n$ and using (\ref{Chsemi}) yield
\begin{multline}\label{eq:stab_1_CN1}
 \frac{1}{2}(\norm{u^{N}_h}^{2}-\norm{u^{1}_h}^{2}) +  \tau\sum_{n=1}^{N-1}  (G_h p_h^{n+1}, \bar u_h^{n+1} )_{\Omega} +
\tau \sum_{n=1}^{N-1}(C_h^{n+{\frac{1}{2}}} (\hat u_h^{n+1} - \bar u_h^{n+1}),\bar
u_h^{n+1})_{\Omega} \\ +\tau\sum_{n=1}^{N-1} \|\bar u_h^{n+1} \|_E^2 = {\tau \sum_{n=1}^{N-1} L^{n+1}(\bar u_h^{n+1})}.
\end{multline}
{For the pressure we use the equations (\ref{Gh}), (\eqref{eq:mass_CN}) and the
  definition of the adjoint for $n\ge2$
\begin{align}\label{pres_aveg}
(G_h p_h^{n+1}, \bar u_h^{n+1} )_{\Omega} = -  (p_h^{n+1}, G_h^* \bar
 u_h^{n+1} )_{\Omega} = {(S^p_h \bar p_h^{n+1},  p_h^{n+1})_{\Omega}}-\tilde L^{n+1}(p_h^{n+1}). 
\end{align}
For $n=1$, using \eqref{eq:mass_CN} and the definition of $\tilde{L}$,
\begin{align*}
  (G_h p_h^{2}, \bar u_h^{2} )_{\Omega} =-( p_h^{2}, G_h^*\bar u_h^{2} )_{\Omega}&=-\frac12( p_h^{2},  G_h^*u_h^{1} )_{\Omega}-\frac12( p_h^{2}, G_h^* u_h^{2} )_{\Omega} \\&= -\frac12 \tilde{L}^1(p^2_h)-\frac12 \tilde{L}^2(p^2_h)+\frac12 |p^2_h|^2_{s_p}.
\end{align*}}
Then the equation (\ref{eq:stab_1_CN1}) becomes
\begin{multline}\label{eq:stab_1_CN}
 \frac{1}{2}(\norm{u^{N}_h}^{2}-\norm{u^{1}_h}^{2}) + \tau \sum_{n=2}^{N-1}  {(S_h \bar p_h^{n+1},  p_h^{n+1})_{\Omega}}  + \tau\sum_{n=1}^{N-1}\|\bar u_h^{n+1} \|_E^2 =\sum_{i=1}^{3}R^{n+1}_i.
\end{multline}
\begin{align*}
    R^{n+1}_1:&=\tau \sum_{n=1}^{N-1} L^{n+1}(\bar u_h^{n+1}),\\
    R^{n+1}_2:&=-\tau\sum_{n=1}^{N-1} (C_h^{n+{\frac{1}{2}}} (\hat u_h^{n+1} - \bar u_h^{n+1}),\bar
u_h^{n+1})_{\Omega}, \\
R^{n+1}_3 &:={\tau\frac12 (\tilde L^1(p_h^2)+\tilde L^2(p_h^2))}+\tau\sum^{N-1}_{n=2}{\tilde{L}^{n+1}(p_h^{n+1})}.
\end{align*}
Let us estimate $R^{n+1}_1$ and $R^{n+1}_3$. We have
\begin{align*}
  R^{n+1}_1 &\leq   \tau \sum_{n=1}^{N-1} \norm{L^{n+1}}_h\sqrt{\norm{\bar u_h^{n+1}}_E^2+\norm{\bar u_h^{n+1}}^2}\\
      & \leq 8\tau\sum^{N-1}_{n=1}\norm{L^{n+1}}^2_h + \frac{1}{32}\tau\sum^{N-1}_{n=1}\norm{\bar u_h^{n+1}}^2_E+\frac{1}{32}\tau\sum^{N-1}_{n=1}\norm{ u_h^{n+1}}^2.
\end{align*}
and
{ \begin{alignat}{1}
    R^{n+1}_3 &\leq \tau\frac12 (\norm{\tilde L^1}_{\mathrm{div}}|p_h^2|_{s_p}+\norm{\tilde L^2}_{\mathrm{div}}|p_h^2|_{s_p})+\tau\sum^{N-1}_{n=2}\norm{\tilde{L}^{n+1}}_{\mathrm{div}} |p^{n+1}_h|_{s_p} \nonumber \\ 
    & \leq \frac{1}{\epsilon} \tau\norm{\tilde{L}^{1}}^2_{\mathrm{div}}+{\frac{1}{2\epsilon} \tau\sum^{N-1}_{n=1}\norm{\tilde{L}^{n+1}}^2_{\mathrm{div}} +\frac{\epsilon}{2}\tau\sum^{N-1}_{n=1}|p_h^{n+1}|_{s_p}^2}. \label{R3}
\end{alignat}}
For the $R^{n+1}_2$ term we use the relation \eqref{eq:hat_bound}
\[
\hat u_h^{n+1} - \bar u_h^{n+1} = -\frac12 \delta \delta u_h^{n+1},
\]
together with Lemma \ref{lem:sum_by_parts_spec} to obtain
\begin{multline}\label{eq:c_bound_1_CN}
-\sum_{n=1}^{N-1} (C_h^{n+{\frac{1}{2}}} (\hat u_h^{n+1} - \bar u_h^{n+1}), \bar
u_h^{n+1})_{\Omega} = \frac12 \sum_{n=1}^{N-1} (C_h^{n+{\frac{1}{2}}} \delta \delta u_h^{n+1}, \bar
u_h^{n+1})_{\Omega} \\
\leq -\frac12  \sum_{n=2}^{N-1} (C_h^{n-\frac{1}{2}} \delta u_h^{n},
\delta  \bar u_h^{n+1})_{\Omega} + Co C_{\gamma, \beta} \sum_{n=1}^{N-1} \| \delta u_h^{n}\| \|\bar
u_h^{n}\|\\ + \frac12  (C_h(t^{N-\frac{1}{2}})
\delta u_h^{N}, \bar
u_h^{N})_{\Omega} - \frac12 (C_h(t^1) \delta u_h^{1}, \bar u_h^{2})_{\Omega}.
\end{multline}
Using Lemma \ref{Chlemma2} in the first term on the right hand of (\ref{eq:c_bound_1_CN}) we have
\begin{align}\label{eq_2406}
\frac12 \sum_{n=2}^{N-1} (C_h^{n-\frac{1}{2}} \delta u_h^{n}, 
\delta  \bar u_h^{n+1})_{\Omega} = -\frac14 \sum_{n=2}^{N-1} (C_h^{n-\frac{1}{2}} \delta  \delta u_h^{n+1}, \delta \bar u_h^{n+1} 
)_{\Omega} + \frac12 \sum_{n=2}^{N-1}|\delta \bar u_h^{n+1}|_{s_u}^2.
\end{align}
Multiplying the equation (\ref{eq:c_bound_1_CN}) with $\tau$ and using  (\ref{eq_2406}) leads to
\begin{multline} \label{eq_2406_1}
R^{n+1}_2 
\leq \frac14\underbrace{\tau \sum_{n=2}^{N-1} (C_h^{n-\frac{1}{2}} \delta  \delta u_h^{n+1}, \delta \bar u_h^{n+1} 
)_{\Omega} - \tau\frac12 \sum_{n=2}^{N-1}|\delta \bar u_h^{n+1}|_{s_u}^2 }_{\Xi}\\ +\tau \frac12  (C_h(t^{N-\frac{1}{2}})
\delta u_h^{N}, \bar
u_h^{N})_{\Omega} +\tau Co C_{\gamma, \beta}  \sum_{n=1}^{N-1}\| \delta u_h^{n}\| \|\bar
u_h^{n}\| - \tau\frac12 (C_h(t^1) \delta u_h^{1}, \bar u_h^{2})_{\Omega}.
\end{multline}
Using the similar arguments as in (\ref{eq_240622}) in the first term on the right hand side we obtain
\begin{align*}
    \Xi  &\leq CCo \sum_{n=2}^{N-1}\norm{\delta  \delta
u_h^{n+1}}^2 +Co_{\max} \tau \sum_{n=2}^{N-1} \norm{\hat u^{n+1}_h}^2+ {Co \Tnorm{\bar u_h,  p_h}^2}\nonumber\\&+ CCo\tau\sum_{n=2}^{N-1}\norm{L^{n+1}}^{2}_h+ CoC\tau\sum_{n=1}^{N-1}\norm{\tilde{L}^{n+1}}^{2}_{\mathrm{div}}.
\end{align*}
The last two terms are bounded as:
\begin{align*}
 \tau (C_h&(t^{N-\frac{1}{2}}) \delta u_h^{N}, \bar u_h^{N})_{\Omega} -  \tau (C_h(t^1) \delta u_h^{1},  \bar u_h^{2})_{\Omega} \\ &\leq C  Co (\norm{\delta u^N_h}^2+\norm{  \bar { u}^{N}_h}^2+\norm{\delta u^1_h}^2+\norm{  \bar { u}^{2}_h}^2).
\end{align*}
 Taking $Co \le 1$ and $Co_{\max}=\max\{Co, Co_{4/3}\}$, $C_{\gamma, \beta}= \frac{ (1+\gamma_u) C_i \beta_{1,\infty}}{1+\beta_\infty}$, the equation (\ref{eq_2406_1}) becomes,
\begin{align} \label{T2}
    R^{n+1}_2  &\leq CCo \sum_{n=2}^{N-1}\norm{\delta  \delta
u_h^{n+1}}^2+ {Co \Tnorm{\bar u_h,  p_h}^2 }+CCo\tau \sum_{n=2}^{N-1}\norm{L^{n+1}}^{2}_h\nonumber\\& +\tau Co_{\max} C_{\gamma, \beta}\sum_{n=1}^{N-1}\|  u_h^{n}\|^2+ CoC\tau\sum_{n=1}^{N-1}\norm{\tilde{L}^{n+1}}^{2}_{\mathrm{div}} \nonumber\\&+ C  Co (\norm{\delta u^N_h}^2+\norm{  \bar { u}^{N}_h}^2+\norm{\delta u^1_h}^2+\norm{  \bar { u}^{2}_h}^2).
\end{align}
The equation (\ref{eq:stab_1_CN}) becomes
\begin{multline*}
 \frac{1}{2}(\norm{u^{N}_h}^{2}-\norm{u^{1}_h}^{2}) + \tau \sum_{n=2}^{N-1}  {(S_h  \bar p_h^{n+1},  p_h^{n+1})_{\Omega}}  + \tau\sum_{n=1}^{N-1}\|\bar u_h^{n+1} \|_E^2 \\ \lesssim Co \sum_{n=2}^{N-1}\norm{\delta  \delta
u_h^{n+1}}^2+ {Co \Tnorm{\bar u_h,  p_h}^2}+\tau \sum_{n=1}^{N-1}\norm{L^{n+1}}^{2}_h \nonumber\\ +\tau Co_{\max} C_{\gamma, \beta}\sum_{n=1}^{N-1} \|  u_h^{n}\|^2 + \tau\sum_{n=1}^{N}\norm{\tilde{L}^{n+1}}^{2}_{\mathrm{div}}\\ +  Co (\norm{ u^N_h}^2+\norm{{ u}^{N-1}_h}^2+\norm{u^1_h
}^2+\norm{ { u}^{2}_h}^2).
\end{multline*}
For the pressure term in the right hand side we observe that since $(a - b) a = \frac12 (a^2 + (a-b)^2 - b^2)$ we have using telescoping property that
{\begin{align*}
\sum_{n=2}^{N-1} s_p( \bar p_h^{n+1},  p_h^{n+1}) &= \sum_{n=2}^{N-1} s_p(  p_h^{n+1},  p_h^{n+1}) -\frac12 \sum_{n=2}^{N-1} s_p( \delta p_h^{n+1},   p_h^{n+1}) \\&=\sum_{n=2}^{N-1}  |p_h^{n+1}|^2_{s_p}+\frac14   |p_h^{2}|^2_{s_p}-\frac14 \sum_{n=2}^{N-1}  |\delta p_h^{n+1}|^2_{s_p} - \frac14 |p_h^N|^2_{s_p}.
\end{align*}}
As a consequence we have
\begin{multline*}
 \norm{u^{N}_h}^{2} + |||(\bar u_h, p_h)|||^2  \\ \lesssim  \norm{u^{1}_h}^{2}+Co \sum_{n=2}^{N-1}\norm{\delta  \delta
u_h^{n+1}}^2+ Co \Tnorm{\bar u_h,  p_h}^2 +\tau \sum_{n=1}^{N-1}\norm{L^{n+1}}^{2}_h \nonumber\\ +\tau Co_{\max} C_{\gamma, \beta} \sum_{n=1}^{N-1} \|  u_h^{n}\|^2+ \tau\sum_{n=1}^{N}\norm{\tilde{L}^{n}}^{2}_{\mathrm{div}}\\+   Co (\norm{ u^N_h}^2+\norm{{ u}^{N-1}_h}^2+\norm{ { u}^{2}_h}^2) +{\tau \sum_{n=2}^{N-1}  |\delta p_h^{n+1}|^2_{s_p}}.
\end{multline*}

Taking $Co$ small and applying Lemma \ref{lem:pres_s_inc} to the last term of the right hand side we may then write
\begin{multline} 
\norm{u^{N}_h}^{2} +\Tnorm{\bar u_h,  p_h}^2  \lesssim \norm{u^{1}_h}^{2}+|(u_h,p_h)|_{\delta \delta}^2 +\tau \sum_{n=1}^{N-1}\norm{L^{n+1}}^{2}_h \\ +\tau Co_{\max} C_{\gamma, \beta} \sum_{n=1}^{N-1} \|  u_h^{n}\|^2+ \tau\sum_{n=0}^{N-1}\norm{\tilde{L}^{n+1}}^{2}_{\mathrm{div}}\\+   Co (\norm{ u^N_h}^2+\norm{{ u}^{N-1}_h}^2+\norm{ { u}^{2}_h}^2)+ \tau | p_h^{2}|^2_{s_p}. 
\end{multline}
Applying Lemma \ref{lem:time_diss} we can bound the term $|(u_h,p_h)|_{\delta\delta}$, for $Co$ small enough leading to
\begin{multline}
\norm{u^{N}_h}^{2} +\Tnorm{\bar u_h,  p_h}^2  \lesssim \tau \sum_{n=1}^{N-1}\norm{L^{n+1}}^{2}_h \\ +\tau Co_{\max} C_{\gamma,\beta} \sum_{n=1}^{N-1} \|  u_h^{n}\|^2+ \tau\sum_{n=0}^{N-1}\norm{\tilde{L}^{n+1}}^{2}_{\mathrm{div}}\\+   Co (\norm{ u^N_h}^2+\norm{{ u}^{N-1}_h}^2+\norm{ { u}^{2}_h}^2)+  \frac12\tau | p_h^{2}|^2_{s_p} .
\end{multline}
Observe that the terms with $u_h^2,\, u_h^{N-1}$ and $p_h^2$  in the right hand side remain to be bounded.
First to eliminate the $u_h^{N},\, u_h^{N-1},\, u_h^{2}$ contributions let $u_h^{n*}$ be such that $\|u_h^{n*}\| = \max_{n \in \{2,\hdots,N\}} \|u_h^n\|$.
Since $\norm{ u^N_h}^2+\norm{{ u}^{N-1}_h}^2+\norm{ { u}^{2}_h}^2 \leq 3 \|u_h^{n*}\|^2$ it follows that
 for $Co$ small enough
\[
\norm{u^{n*}_h}^{2} -   C  Co (\norm{ u^N_h}^2+\norm{{ u}^{N-1}_h}^2+\norm{ { u}^{2}_h}^2) \ge  (1 - 3 C Co) \norm{u^{n*}_h}^{2} \ge \frac12 \norm{u^{n*}_h}^{2}.
\]
Applying the above bound with $N = n*$ we see that the desired inequality holds for $\norm{u^{n*}_h}^{2}$. Now extend the summation in the right hand side to $N-1$. Since $\norm{u^n_h}^2 \leq \norm{u^{n*}_h}^2$ for all $2 \leq n \leq N$, the relation holds for all $n$ and we obtain

\begin{multline*}
\max_{2 \leq n \leq N} \norm{u^{n}_h}^{2}  \lesssim  \norm{u^{1}_h}^2 +\tau \sum_{n=2}^{N}\norm{L^{n}}^{2}_h + \tau\sum_{n=1}^{N}\norm{\tilde{L}^{n}}^{2}_{\mathrm{div}} +\tau \phi\sum_{n=2}^{N-1} \|  u_h^{n}\|^2
+  \frac12\tau | p_h^{2}|^2_{s_p} .
\end{multline*}
For $N=2$ and $Co$, $\tau$ sufficiently small we obtain the bound for $| p_h^{2}|^2_{s_p}$, in terms of $\norm{u^{1}_h}^2$ and $\norm{u^{0}_h}^2$ by testing \eqref{eq:momentum_CN} by $v_h = \bar u_h^2$ and \eqref{eq:mass_CN} by $q_h = p_h^2$.
The estimate for \eqref{th_CN_1} follows from Gronwall's inequality: Proposition \ref{Gronwall}. We can then use the above inequality to get the estimate \eqref{th1p2}. 
\end{proof}
\subsection{Improved stability for piece wise affine elements and CIP stabilization, Crank-Nicolson}
The following result proves the stability of the Crank--Nicolson scheme under the standard hyperbolic CFL for
piecewise affine approximation.

\begin{lemma}\label{lemmaY_1}
Let $k=1$,  $y_h^{n}= \delta u_h^{n}$  and  $Y_h^{n}:= y_h^n-\pi_0 y_h^n$ then
\begin{alignat}{1} \label{lemmaY_k1}
 \|Y_h^{n+1}\|^2 &\le   C \Bigl( \frac{Co}{Re} \tau  \|\bar u_h^{n+1}\|_A^2 +  \tau^2  \beta_{1,\infty}^2 \|\hat u_h^{n+1}\|^2+ \tau  Co |\bar u_h^{n+1}|_{s_u}^2 \nonumber \\
  & +    \tau Co   |p_h^{n+1}|_{s_p}^2 +   \tau \|L_h^{n+1}\|_h^2+Co \norm{\delta \delta u^{n+1}_h}^2 \Bigr). 
\end{alignat}
\end{lemma}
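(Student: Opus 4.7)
My plan is to exploit the $L^{2}$-orthogonality $(Y_h^{n+1},w_h)=0$ for every $w_h\in[X_h^{0}]^{d}$, which implies $\|Y_h^{n+1}\|^{2}=(Y_h^{n+1},y_h^{n+1})=(Y_h^{n+1},\delta u_h^{n+1})$. I then substitute the momentum equation \eqref{eq:momentum_CN}, written as $\delta u_h^{n+1}=\tau(L_h^{n+1}-G_h p_h^{n+1}-C_h^{n+\frac12}\hat u_h^{n+1}-A_h\bar u_h^{n+1})$, so that $\|Y_h^{n+1}\|^{2}$ splits into four inner products against $Y_h^{n+1}$. Each of these is treated with Cauchy--Schwarz and Young's inequality with an arbitrarily small $\varepsilon$, producing absorbable $\varepsilon\|Y_h^{n+1}\|^{2}$ contributions.

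For the source term $\tau(L_h^{n+1},Y_h^{n+1})$ I first control $\|L_h^{n+1}\|\lesssim\tau^{-1/2}\|L^{n+1}\|_h$ by testing the functional $L^{n+1}$ on $L_h^{n+1}\in V_h$ in the definition of $\|\cdot\|_h$ and invoking \eqref{enery_est} (with $Re>1$, $Co\le1$) to trade $\|L_h^{n+1}\|_E$ for $\tau^{-1/2}\|L_h^{n+1}\|$; Cauchy--Schwarz/Young then yields the $\tau\|L^{n+1}\|_h^{2}$ contribution. For the viscous term $\tau(A_h\bar u_h^{n+1},Y_h^{n+1})$ I apply \eqref{invA} on the $Y_h^{n+1}$ side to obtain $\|Y_h^{n+1}\|_A\lesssim\sqrt{\mu}\,h^{-1}\|Y_h^{n+1}\|$; the prefactor $\tau^{2}\mu/h^{2}$ collapses to $\tau Co/Re$ in the $Re>1$ regime, giving the desired viscous term.

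The convection contribution splits into the streamline part $\tau\sum_T(\beta\cdot\nabla\hat u_h^{n+1},Y_h^{n+1})_T$ and the CIP part $\tau s_u(\hat u_h^{n+1},Y_h^{n+1})$. For the streamline part I exploit $k=1$: on each $T$ the field $\nabla\hat u_h^{n+1}$ is constant, so $\beta\cdot\nabla\hat u_h^{n+1}$ has oscillation bounded by $Ch\beta_{1,\infty}|\nabla\hat u_h^{n+1}|_T$; subtracting $\pi_{0}(\beta\cdot\nabla\hat u_h^{n+1})$ is legitimate by the orthogonality of $Y_h^{n+1}$ to $[X_h^{0}]^{d}$, and combining with the inverse inequality \eqref{inverse} yields $\|\beta\cdot\nabla\hat u_h^{n+1}-\pi_{0}(\beta\cdot\nabla\hat u_h^{n+1})\|\lesssim\beta_{1,\infty}\|\hat u_h^{n+1}\|$, which through Young gives the $\tau^{2}\beta_{1,\infty}^{2}\|\hat u_h^{n+1}\|^{2}$ contribution. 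For the CIP part I pair with $|Y_h^{n+1}|_{s_u}\lesssim\sqrt{\beta_\infty/h}\,\|Y_h^{n+1}\|$ from \eqref{stab_bound}, then split $\hat u_h^{n+1}=\bar u_h^{n+1}-\tfrac12\delta\delta u_h^{n+1}$ via \eqref{eq:hat_bound} and apply \eqref{stab_bound} again on the increment, using $\tau\beta_\infty/h\le Co\le1$ to produce the $\tau Co|\bar u_h^{n+1}|_{s_u}^{2}$ and $Co\|\delta\delta u_h^{n+1}\|^{2}$ contributions.

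The main obstacle is the pressure term $\tau(G_h p_h^{n+1},Y_h^{n+1})$. Integration by parts together with the boundary contribution in the definition of $G_h$ shows that $(G_h p_h^{n+1},v_h)=(\nabla p_h^{n+1},v_h)$ for every $v_h\in V_h$; equivalently $G_h p_h^{n+1}$ is the $L^{2}$-projection of $\nabla p_h^{n+1}$ onto $V_h$. I then write $(G_h p_h^{n+1},Y_h^{n+1})=(\nabla p_h^{n+1},Y_h^{n+1})+(G_h p_h^{n+1}-\nabla p_h^{n+1},Y_h^{n+1})$. The subtle point is that the first inner product vanishes because for $k=1$ the gradient $\nabla p_h^{n+1}$ lies in $[X_h^{0}]^{d}$ and $Y_h^{n+1}$ is orthogonal to that space; one must recognise both that $G_h p_h^{n+1}$ is a continuous $V_h$-projection rather than $\nabla p_h^{n+1}$ itself, and that the $k=1$ hypothesis is essential for this cancellation. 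The remaining term is controlled by the $L^{2}$-projection identity $\|G_h p_h^{n+1}-\nabla p_h^{n+1}\|=\inf_{v_h\in V_h}\|\nabla p_h^{n+1}-v_h\|$ combined with the pressure CIP interpolation estimate \eqref{pres_inter}, giving the bound $Ch_\beta^{-1/2}|p_h^{n+1}|_{s_p}$. A final Young step converts $\tau^{2}h_\beta^{-1}$ into $\tau Co$, producing the $\tau Co|p_h^{n+1}|_{s_p}^{2}$ contribution. Collecting all four estimates and absorbing the accumulated $\varepsilon\|Y_h^{n+1}\|^{2}$ terms on the left concludes the proof.
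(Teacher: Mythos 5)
Your overall strategy is the paper's: test the momentum equation against the $\pi_0$-orthogonal part of the increment and bound the four resulting terms; your treatment of the viscous, source and pressure contributions (in particular the observation that for $k=1$ the term $(\nabla p_h^{n+1}, Y_h^{n+1})_\Omega$ vanishes because $\nabla p_h^{n+1}\in [X_h^0]^d$, with the remainder $G_hp_h^{n+1}-\nabla p_h^{n+1}$ controlled by \eqref{pres_inter}) coincides with the paper's. The gap is in the convection term. You expand $(C_h^{n+\frac12}\hat u_h^{n+1}, Y_h^{n+1})_\Omega$ as $\sum_T(\beta\cdot\nabla\hat u_h^{n+1},Y_h^{n+1})_T + s_u(\hat u_h^{n+1},Y_h^{n+1})$, but the defining relation \eqref{eq:Cdef} only yields this expansion when the second argument lies in $V_h$, and $Y_h^{n+1}=y_h^{n+1}-\pi_0 y_h^{n+1}\notin V_h$. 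The correct identity is $(C_h^{n+\frac12}\hat u_h^{n+1}, Y_h^{n+1})_\Omega=(C_h^{n+\frac12}\hat u_h^{n+1}, \pi_h Y_h^{n+1})_\Omega$, to be expanded against $\pi_h Y_h^{n+1}$. This matters exactly where you invoke it: $\pi_h Y_h^{n+1}$ is \emph{not} orthogonal to $[X_h^0]^d$, so you cannot subtract $\pi_0(\beta\cdot\nabla\hat u_h^{n+1})$ for free in the streamline part.

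The paper repairs this by writing, for arbitrary $w_h\in V_h$,
$(\beta\cdot\nabla\hat u_h^{n+1},\pi_h Y_h^{n+1})_\Omega=((\beta-\beta_0)\cdot\nabla\hat u_h^{n+1}, Y_h^{n+1})_\Omega+(\beta\cdot\nabla\hat u_h^{n+1}-w_h,(\pi_h-I)Y_h^{n+1})_\Omega$:
the first piece is your oscillation argument, while the second uses that $(\pi_h - I)Y_h^{n+1}\perp V_h$ together with the discrete interpolation estimate \eqref{interpinq}, which produces an additional contribution of the form $h_\beta^{-1/2}|\hat u_h^{n+1}|_{s_u}\|Y_h^{n+1}\|$. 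That extra term happens to have the same shape as the one you already extract from the CIP part (and is absorbed into $\tau Co|\bar u_h^{n+1}|_{s_u}^2 + Co\|\delta\delta u_h^{n+1}\|^2$ via \eqref{eq:hat_bound}), so the statement of the lemma survives; but as written your streamline estimate relies on an orthogonality the actual term does not possess, and the missing correction cannot be dispatched by $\beta_{1,\infty}\|\hat u_h^{n+1}\|\,\|Y_h^{n+1}\|$ alone --- a crude inverse bound would give $Co\,\|\hat u_h^{n+1}\|\,\|Y_h^{n+1}\|$ with no factor of $\tau$, which is not summable in $n$. You need \eqref{interpinq} here, just as the paper does.
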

\begin{proof}
This proof is similar to \cite[Lemma 5.3]{BGG23}, but we sketch it here for completeness.
Using \eqref{eq:momentum_CN} we obtain
$\|Y_h^{n+1}\|^2=(y_h^{n+1}, \pi_h Y_h^{n+1})_{\Omega} = \tau (M_1+M_2+M_3+M_4),$
where
\begin{alignat*}{1}
M_1:=&-(C_h^{n+\frac12} \hat u_h^{n+1},  \pi_h Y_h^{n+1})_{\Omega}, \quad  M_2:=-(A_h \bar u_h^{n+1},  \pi_h Y_h^{n+1} )_{\Omega}, \\
M_3:=& -( G_h p_h^{n+1},  \pi_h Y_h^{n+1})_{\Omega}, \quad  M_4:= L^{n+1}(\pi_h Y_h^{n+1}).
\end{alignat*}
We first estimate the $M_2$ and $M_4$ which are the most simple ones to bound. 
Using \eqref{invA} we obtain 
$ 
M_2 \le  C  (\frac{Co}{\tau Re})^{\frac{1}{2}}  \|\bar u_h^{n+1}\|_A \|Y_h^{n+1}\|.
$ 
We also easily see using \eqref{enery_est} that
\begin{equation*}
M_4 \le  \frac{(C_E +\tau)^{\frac{1}{2}}}{\tau^{\frac{1}{2}}} \|L^{n+1}\|_h   \|Y_h^{n+1}\|.
\end{equation*}
To estimate $M_1$ we use the definition of $C_h^{n+\frac12}$ to write 
\begin{equation*}
M_1:=- ( \beta^{n+\frac12} \cdot \nabla \hat u_h^{n+1}, \pi_h Y_h)_{\Omega}+ s_u( \hat u_h^{n+1},   \pi_h Y_h^{n+1}). 
 \end{equation*}
We have using \eqref{stab_bound}
$ 
s_u( \hat u_h^{n+1},   \pi_h Y_h) \le C (\frac{Co}{\tau})^{\frac{1}{2}} |\hat u_h^{n+1}|_{s_u} \| Y_h^{n+1}\|.   
$ 
On the other hand, for an arbitrary $w_h \in V_h$ 
\begin{alignat*}{1}
   &( \beta^{n+\frac12} \cdot \nabla \hat u_h^{n+1}, \pi_h Y_h^{n+1})_{\Omega} \\
=& ( (\beta^{n+\frac12}-\beta_0^{n+\frac12}) \cdot \nabla \hat u_h^{n+1}, Y_h^{n+1})_{\Omega}   + ( \beta^{n+\frac12} \cdot \nabla \hat u_h^{n+1}-w_h,  (\pi_h-I) Y_h^{n+1})_{\Omega}.
\end{alignat*}
Note here that we crucially used that $\beta_0^{n+\frac12} \cdot \nabla \hat u_h^{n+1} \in X_h$.  
Thus, using \eqref{interpinq} and \eqref{inverse} we obtain
$ 
   ( \beta^{n+\frac12} \cdot \nabla \hat u_h^{n+1}, \pi_h Y_h^{n+1})_{\Omega}  \le  C \big(\beta_{1,\infty} \|\hat u_h^{n+1}\|+ \frac{1}{h_{\beta}^{\frac{1}{2}}} |\hat u_h^{n+1}|_{s_u}  \big) \|Y_h^{n+1}\|.
$ 
{Finally, by the triangle inequality and \eqref{eq:hat_bound} we have the bound
\begin{align}
|\hat u_h^{n+1}|_{s_u} \le |\bar u_h^{n+1}|_{s_u}+\frac{C\sqrt{Co}}{\sqrt{\tau}}\|\delta \delta u_h^{n+1}\|.
\end{align}}
According to the definition of $G_h$ and using that $\nabla p_h^{n+1} \in X_h$ we obtain
\begin{alignat*}{1}
M_3=& -( \nabla p_h^{n+1},  \pi_h Y_h^{n+1})_{\Omega}= ( \nabla p_h^{n+1},  (I-\pi_h) Y_h^{n+1})_{\Omega}\\
=& ( \nabla p_h^{n+1}-v_h,  (I-\pi_h) Y_h^{n+1})_{\Omega}, \quad   \forall v_h \in V_h.
\end{alignat*}
Thus, using  \eqref{pres_inter} we get
$ 
M_3 \le C h_\beta^{-\frac12} |p_h^{n+1}|_{s_p} \| Y_h^{n+1}\|.
$ 
Putting all the above estimates together we obtain (\ref{lemmaY_k1}). 

\end{proof}

\begin{theorem}
Suppose that $T=N \tau$, $Re >1$,  $\gamma_u>0$ $\gamma_p>0$ and $k=1$. Suppose that $Co$ is sufficiently small only
depending on geometric constants of the mesh and $\gamma_u$. For $\{u^n_h\}$
 solving (\ref{eq:momentum_CN})--(\ref{eq:mass_CN}) we have the
following bound:
\begin{align}\label{th_CN_2}
 \max_{2 \leq n \leq N} \norm{u^{n}_h}^{2}   \lesssim (\norm{u^{0}_h}^2+\norm{u^{1}_h}^2&+{\tau \sum_{n=1}^{N-1}\norm{L^{n+1}}_h^2}+ \tau\sum_{n=1}^{N}\norm{\tilde{L}^{n}}^{2}_{\mathrm{div}})M, 
  \end{align}
  where $M$ is given by 
  \begin{align} \label{th1_m}
    M=\left(1+cT\phi \e^{cT \phi}\right), \quad \phi=\left((1+\gamma_u)\frac{ \beta_{1,\infty}}{1+\beta_\infty} +  \tau \beta^2_{1,\infty} \right),
\end{align}
and
\begin{align}\label{th2p2}
  {\Tnorm{\bar u_h,p_h}^2 }  \lesssim \phi T \big(\|u_h^0\|^2 + \| u_h^1\|^2&+
\tau \sum_{n=1}^{N-1}\norm{L^{n+1}}_h^2+ \tau\sum_{n=1}^{N}\norm{\tilde{L}^{n}}^{2}_{\mathrm{div}}\big)M.
\end{align}
\end{theorem}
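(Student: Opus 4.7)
The plan is to mirror the argument of the previous theorem (general $k$) but to replace the use of estimate \eqref{eq_1212} by the sharper estimate \eqref{eq_1212_1} from Lemma \ref{lem:time_diss}, and to control the additional term $Co^{1/2}\sum_n \|Y_h^{n+1}\|^2$ that appears there by means of Lemma \ref{lemmaY_1}. The advantage of this route is that \eqref{eq_1212_1} only requires $Co$ (not $Co_{\max}$) to be small, which is exactly what we need for $k=1$ under the standard hyperbolic CFL.

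First I would repeat the energy identity obtained by testing \eqref{eq:momentum_CN} with $\bar u_h^{n+1}$ and \eqref{eq:mass_CN} with $p_h^{n+1}$, summing from $n=1$ to $N-1$, and rewriting the convective contribution $\tau\sum (C_h^{n+1/2}(\hat u_h^{n+1}-\bar u_h^{n+1}),\bar u_h^{n+1})_{\Omega}$ via $\hat u_h^{n+1}-\bar u_h^{n+1} = -\tfrac12 \delta\delta u_h^{n+1}$, Lemma \ref{lem:sum_by_parts_spec} and Lemma \ref{Chlemma2}, precisely as in the preceding proof. This produces the inequality
\[
\norm{u_h^N}^2 + \Tnorm{\bar u_h,p_h}^2 \lesssim \norm{u_h^1}^2 + |(u_h,p_h)|_{\delta\delta}^2 + \tau\sum_n \|L^{n+1}\|_h^2 + \tau\sum_n \|\tilde L^{n+1}\|_{\mathrm{div}}^2 + \tau\phi \sum_n \|u_h^n\|^2 + \tau|p_h^2|_{s_p}^2,
\]
together with boundary-value terms that are absorbed exactly as before when $Co$ is small.

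Next I would apply \eqref{eq_1212_1} of Lemma \ref{lem:time_diss} to bound $|(u_h,p_h)|_{\delta\delta}^2$. This produces all of the terms we already control plus the new contribution $Co^{1/2}\sum_{n=2}^{N-1}\|Y_h^{n+1}\|^2$ with $Y_h^{n+1}=\delta u_h^{n+1}-\pi_0\delta u_h^{n+1}$. Here I would invoke Lemma \ref{lemmaY_1} (which is why $k=1$ is required): summing its bound over $n$ and multiplying by $Co^{1/2}$ gives
\[
Co^{1/2}\sum_n \|Y_h^{n+1}\|^2 \lesssim Co^{1/2}\,\tau\!\sum_n\!\Bigl(\tfrac{Co}{Re}\|\bar u_h^{n+1}\|_A^2 + Co\,|\bar u_h^{n+1}|_{s_u}^2 + Co\,|p_h^{n+1}|_{s_p}^2 + \|L^{n+1}\|_h^2\Bigr) + Co^{1/2}\tau^2\beta_{1,\infty}^2\sum_n\|\hat u_h^{n+1}\|^2 + Co^{3/2}\sum_n \|\delta\delta u_h^{n+1}\|^2.
\]
Under $Re>1$ the first group is bounded by $Co^{3/2}\Tnorm{\bar u_h,p_h}^2$ and is absorbed on the left for $Co$ small; the $\|\delta\delta u_h\|^2$ contribution is absorbed into $|(u_h,p_h)|_{\delta\delta}^2$ by a second application of \eqref{eq_1212_1} (or simply by making $Co$ small); the $L$--term joins the data; and $Co^{1/2}\tau^2 \beta_{1,\infty}^2\sum\|\hat u_h^{n+1}\|^2$ is rewritten, via $\|\hat u_h^{n+1}\|\le \tfrac32\|u_h^n\|+\tfrac12\|u_h^{n-1}\|$, as $\tau\cdot(\tau\beta_{1,\infty}^2)\sum\|u_h^n\|^2$, which is exactly the $\tau\beta_{1,\infty}^2$ piece of $\phi$ in \eqref{th1_m}.

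Finally I would handle the $u_h^N$, $u_h^{N-1}$, $u_h^2$ boundary contributions by the same max-norm trick used in the previous theorem (defining $n^*$ as the time index realising $\max_n\|u_h^n\|$, absorbing $Co\|u_h^{n^*}\|^2$ on the left, and then extending the summation), bound $|p_h^2|_{s_p}^2$ by testing \eqref{eq:momentum_CN}--\eqref{eq:mass_CN} at $n=1$ as was done there, and close the argument with the discrete Gronwall inequality (Proposition \ref{Gronwall}) with coefficient $\phi=(1+\gamma_u)\beta_{1,\infty}/(1+\beta_\infty)+\tau\beta_{1,\infty}^2$. Estimate \eqref{th2p2} then follows by plugging the resulting $\max_n\|u_h^n\|^2$ bound back into the energy identity. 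I expect the main technical obstacle to be bookkeeping: making sure that every new term produced by Lemma \ref{lemmaY_1} carries either a positive power of $Co$ (so it is absorbed on the left), a factor of $\tau\phi$ (so it enters Gronwall), or is part of the data, without accidentally reintroducing a $Co_{4/3}$ dependence.
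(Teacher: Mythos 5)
Your second step---replacing \eqref{eq_1212} by \eqref{eq_1212_1} and then invoking Lemma \ref{lemmaY_1} to absorb the resulting term $Co^{1/2}\sum_n\|\delta u_h^{n+1}-\pi_0\delta u_h^{n+1}\|^2$---is exactly what the paper does, and your bookkeeping for the terms produced by Lemma \ref{lemmaY_1} (absorption of the energy-type terms into $\Tnorm{\bar u_h,p_h}^2$ for $Co$ small, and conversion of $\tau^2\beta_{1,\infty}^2\sum_n\|\hat u_h^{n+1}\|^2$ into the $\tau\beta_{1,\infty}^2$ piece of $\phi$) is sound.

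The gap is in your first step. You assert that carrying out the energy identity ``precisely as in the preceding proof'' yields an inequality whose right-hand side contains only $|(u_h,p_h)|_{\delta\delta}^2$, data, and $\tau\phi\sum_n\|u_h^n\|^2$. It does not: after Lemma \ref{lem:sum_by_parts_spec} and Lemma \ref{Chlemma2} the convective contribution still contains
\[
\Xi=\tfrac12\,\tau\sum_{n=2}^{N-1}(C_h^{n-\frac12}\delta\delta u_h^{n+1},\delta\bar u_h^{n+1})_{\Omega}-\tau\sum_{n=2}^{N-1}|\delta\bar u_h^{n+1}|_{s_u}^2,
\]
and this cannot be bounded by $Co\sum_n\|\delta\delta u_h^{n+1}\|^2$ plus absorbable terms by a direct Cauchy--Schwarz, because $\|\delta\bar u_h^{n+1}\|$ is a first-order increment of size $\max_n\|u_h^n\|$, and $Co\sum_n\|\delta u_h^n\|^2\sim (Co/\tau)\,T\max_n\|u_h^n\|^2$ is not absorbable. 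In the general-$k$ proof $\Xi$ is estimated by substituting the momentum equation for the increment, and the resulting term $\tau^2(C_h^{n-\frac12}C_h^{n-\frac12}\hat u_h^{n},\delta\delta u_h^{n+1})_{\Omega}$ forces the bound $(\tau/h_\beta)^4\|\hat u_h^n\|^2=\tau\,Co_{4/3}^3\|\hat u_h^n\|^2$; this is precisely where $Co_{\max}$, hence the $4/3$-CFL, enters the Gronwall rate. Under only the hyperbolic CFL one has $Co_{4/3}\sim h^{-1/3}\to\infty$, so your displayed inequality does not follow and the final constant degenerates. The missing ingredient---and the reason the theorem is restricted to $k=1$---is the paper's separate re-estimation of $\Xi$: integrate by parts so the convective derivative falls on $\delta\bar u_h^{n+1}$, use that for piecewise affines $\nabla(\delta\bar u_h^{n+1})\vert_T=\nabla(\bar Y_h^{n+1})\vert_T$ with $Y_h^n=\delta u_h^n-\pi_0\delta u_h^n$, bound the result by $Co\bigl(\|Y_h^{n+1}\|^2+\|Y_h^{n}\|^2+\|\delta\delta u_h^{n+1}\|^2\bigr)$ via the inverse inequality, and then apply Lemma \ref{lemmaY_1} a second time. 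Without this step your argument silently reintroduces the $Co_{4/3}$ dependence the theorem is designed to avoid.
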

\begin{proof}  To get an improved estimate we use several of the estimates established in the previous proof. Indeed to avoid the introduction of the $Co_{4/3}$ factor, we only need to find an improved estimate for $\Xi$ in (\ref{eq_2406_1}). To this end, setting $y_h^{n+1} := \delta u_h^{n+1}$, $Y_h^{n+1}:= y_h^{n+1}- \pi_0 y_h^{n+1}$ and using the definition of $C_h$ \eqref{eq:Cdef} we see that
\begin{alignat}{1} \label{Xi_p1}
 \Xi= & \frac12\tau \sum_{n=2}^{N-1} (C_h^{n-\frac{1}{2}} \delta  \delta u_h^{n+1}, \delta \bar u_h^{n+1} 
)_{\Omega} - \tau \sum_{n=2}^{N-1}|\delta \bar u_h^{n+1}|_{s_u}^2. 
\end{alignat}
{
First note that
\begin{align*}
    \tau (C_h^{n-\frac{1}{2}} \delta  \delta u_h^{n+1}, \delta \bar u_h^{n+1} 
)_{\Omega}&=\tau (\beta^{n-\frac{1}{2}} \cdot \nabla \delta \delta u_h^{n+1},\delta \bar u_h^{n+1} )_{\Omega}+\tau s_u(\delta \delta u_h^{n+1},\delta \bar u_h^{n+1})
    \\ &=-\tau (\beta^{n-\frac12} \cdot \nabla \delta \bar u_h^{n+1}, \delta \delta u_h^{n+1} )_{\Omega}+\tau s_u(\delta \delta u_h^{n+1},\delta \bar u_h^{n+1})
    \\ &=-\tau \sum_{T \in \mathcal{T}} (\beta^{n-\frac{1}{2}} \cdot \nabla \bar Y_h^{n+1},\delta \delta u_h^{n+1})_{T}+\tau s_u(\delta \delta u_h^{n+1},\delta \bar u_h^{n+1}).
\end{align*}
The first term is handled by using that on each element $T \in \mathcal{T}$,  $\nabla y_h\vert_{T} = \nabla Y_h\vert_T$, the Cauchy--Schwarz inequality, inverse inequality \ref{inverse} and Young's inequality,
\begin{align*}
    \tau \sum_{T \in \mathcal{T}} (\beta^{n-\frac{1}{2}} \cdot \nabla \bar Y_h^{n+1},\delta \delta u_h^{n+1})_{T}
    \le Co \norm{  Y^{n+1}_h}^2+Co \norm{  Y^{n}_h}^2+Co \norm{\delta \delta u_h^{n+1}}^2.
\end{align*}
Now using, Lemma \ref{lemmaY_1} with $Re \ge 1$ we obtain 
\begin{alignat*}{1}
 \tau \sum_{n=2}^{N-1} \sum_{T \in \mathcal{T}} (\beta^{n-\frac{1}{2}} \cdot \nabla \bar Y_h^{n+1},\delta \delta u_h^{n+1})_{T} & \lesssim  Co^{\frac12} \sum_{n=2}^{N-1}  \| \delta \delta u_h^{n+1}\|^2 +Co^{\frac12} ||| \bar u_h, p_h|||^2  \\
 & + \tau^2  \beta_{1,\infty}^2 \sum_{n=1}^{N} \|u_h^{n}\|^2 +  \tau (C_E+ \tau) \sum_{n=2}^{N-1} \|L^{n+1}\|_h^2.  
\end{alignat*}
The second term can be estimated as:
\begin{align*}
    \tau  s_u(\delta \delta u_h^{n+1},\delta \bar u_h^{n+1})-\tau |\delta \bar u_h^{n+1}|_{s_u}^2  \leq \frac{\tau}{4}| \delta \delta u_h^{n+1}|_{s_u}^2 \lesssim Co \| \delta \delta u_h^{n+1}\|^2.
\end{align*}

 Combining these estimates  we obtain
\begin{alignat}{1}\label{eq_2006_1t}
 \Xi \lesssim   Co^{\frac{1}{2}} ||| \bar u_h, p_h|||^2+  \tau^2  \beta_{1,\infty}^2 \sum_{n=1}^{N} \|u_h^{n}\|^2 +  \tau (C_E+ \tau) \sum_{n=2}^{N-1} \|L^{n+1}\|_h^2.
 \end{alignat}
 }
Using (\ref{eq_2006_1t}) in (\ref{eq_2406_1}) we obtain
\begin{alignat*}{1}
    R_2  &\leq    C  Co (\norm{\delta u^N_h-\pi_0 \delta u^N_h}^2+\norm{  \bar { u}^{N}_h}^2+\norm{\delta u^1_h
-\pi_0 \delta u^1_h}^2+\norm{  \bar { u}^{2}_h}^2) \\&+  C \tau  \sum_{n=2}^{N-1} \|L_h^{n+1}\|_h^2 +C  Co {||| \bar u_h,  p_h|||^2 }\\&+    C (\tau  \beta_{1,\infty}^2+Co (1+\gamma_u) \frac{\beta_{1,\infty}}{1+\beta_{\infty}}) \tau \sum_{n=1}^{N} \|u_h^{n}\|^2+C Co^{\frac{1}{2}} \sum_{n=2}^{N-1} \| \delta \delta  u_h^{n+1}\|^2.
\end{alignat*}
Here we used that $C_E +\tau $ is bounded since  we are assuming that $Co \le 1$. 
{ Finally, using this inequality in (\ref{eq:stab_1_CN1}), controlling all terms except $\Xi$ as before, except that we now use equation \eqref{eq_1212_1} of Lemma \ref{lem:time_diss} followed by Lemma \ref{lemmaY_1} and the fact that $Co$ is sufficiently small 
\begin{align*}
   \max_{2 \leq n \leq N} \norm{u^{n}_h}^{2} 
   \lesssim&    \norm{u^0_h}^2+\norm{u^1_h}^2
   + \phi \tau \sum_{n=1}^{N-1} \|u_h^{n}\|^2 +  \tau  \sum_{n=2}^{N-1} \|L_h^{n}\|_h^2 \\&+ \tau\sum_{n=1}^{N-1}\norm{\tilde{L}^{n}}^{2}_{\mathrm{div}}+ Co^{\frac{1}{2}} \sum_{n=2}^{N-1} \| \delta \delta  u_h^{n+1}\|^2 +  \tau | p_h^{2}|^2_{s_p}.
\end{align*}
The last term is bounded by $\norm{u^0_h}^2+\norm{u^1_h}^2$ as before and we obtain
\begin{equation*}
\max_{2 \leq n \leq N} \norm{u^{n}_h}^{2} \lesssim  \norm{u^{0}_h}^2+\norm{u^{1}_h}^2 +\tau \sum_{n=2}^{N}\norm{L^{n}}^{2}_h + \tau\sum_{n=1}^{N}\norm{\tilde{L}^{n}}^{2}_{\mathrm{div}}.
\end{equation*}
}
We can now use the discrete Gronwall inequality to get \eqref{th_CN_2} and \eqref{th2p2} follows using the bound \eqref{th_CN_2} in the error equation.

\end{proof}
\subsection{A priori error estimate for the velocity approximation}
In this section we will study the error in the Crank--Nicolson IMEX method.  
Reacll that in \eqref{eq:momentum_CN} $p^{n+1}_h$ is an approximation of $p( t^{n+\frac{1}{2}})$ and $u_h^{n+1}$ is an approximation of $u(t^{n+1})$. We therefore define {$u^{n}=u(t^{n})$} and $p^{n+1}=p(t^{n+\frac12})$.
In order to prove error estimates, we first derive the error equations. We use the following notation:
\begin{alignat*}{2}
w^n_h=& \pi_h u^n,\quad   e^n_h=  w^n_h-u_h^n, \quad \eta^n_h=  w^n_h-u^n,\\
q^n_h=& \pi_h p^n, \quad \zeta^n_h=  q^n_h-p_h^n, \quad \kappa^n_h=  q^n_h-p^n.
\end{alignat*}

By the definition of $u_h^0$ and $u_h^1$ there holds $e_h^0 = e_h^1 = 0$. Then, we write the error equations as 
\begin{align}\label{eq_09}
(\delta e_h^{n+1},v_h)_\Omega &+ \tau (C_h^{n+\frac{1}{2}}\hat e_h^{n+1}, v_h)_{\Omega} \nonumber\\&+ \tau (A_h \bar e_h^{n+1}, v_h)_{\Omega} +\tau (G_h \zeta^{n+1}_h,v_h)_{\Omega}  
= \tau L^{n+1}(v_h),\quad \forall v_h \in V_h,\nonumber\\
&{(G^*_h  e_h^{n+1},y_h)_\Omega+(S^p_h \zeta^{n+1}_h,y_h)_\Omega}= \tilde{L}^{n+1}(y_h), \quad \forall y_h \in Q_h,
\end{align}
where
\begin{equation}\label{l_eq}
\tau  L^{n+1}(v_h):= \sum^{5}_{j=1} \Psi_j^{n+1}(v_h),\quad { \tilde{L}^{n+1}(y_h):= \sum^{2}_{j=1} \Phi_j^{n+1}(y_h)},
\end{equation}
and
\begin{align*}
    \Psi_1^{n+1}(v_h):&=(\delta u^{n+1} -\tau\partial_t u(t^{n+\frac12}),v_h)_\Omega, \quad \Psi_2^{n+1}(v_h):=\tau( G_h\kappa_h^{n+1}, v_h)_\Omega, \\ \Psi_3^{n+1}(v_h):&=\tau(C_{h}^{n+\frac{1}{2}} \hat \eta_h^{n+1},v_h)_\Omega, \quad \Psi_4^{n+1}(v_h)  :=\tau(C_{h} (\hat u^{n+1}- u(t^{n+\frac12})),v_h)_\Omega, \\
    \Psi_5^{n+1}(v_h):&= \tau(A_h \bar \eta_h^{n+1},v_h)_{\Omega}, \quad \Psi_6^{n+1}(v_h)  :=\tau(A_{h} (\bar u^{n+1}- u(t^{n+\frac12})),v_h)_\Omega,
    \\
    \Phi_1^{n+1}(y_h):&= (G^*_h  \eta_h^{n+1},y_h)_{\Omega}, \quad \Phi_2^{n+1}(y_h)  :=(S^p_h \kappa_h^{n+1},y_h)_{\Omega}.
\end{align*}

First we verify that $\tilde L$ is zero for all functions in the kernel of $S_h^p$. Assume $S_h^p q_h = 0$. Immediately we see that $\Phi_2^{n+1}(q_h)=0$. This means that $q_h \in C^1(\bar \Omega)$ and therefore $\nabla q_h \in V_h$. We conclude by observing that
\[
\Phi_1^{n+1}(q_h) = (\pi_h u^{n+1} - u^{n+1},\nabla q_h)_{\Omega} = 0.
\]
\begin{theorem} \label{error_cn}
Let $u$ be the solution of (\ref{eq111}) and $\{u^n_h\}^N_{n=0}$
 be the solution of (\ref{eq:momentum_CN})-(\ref{eq:mass_CN}). Let $T=N \tau$, $Re >1$. Suppose that $Co$  is sufficiently small when $k=1$ and $\max \{Co, Co_{4/3}\}$ is sufficiently small  when $k \geq 2$. Then for all $n \ge 2$,
 \begin{align} 
   \norm{\pi_h u(t^n)-u^n_h} &\leq C ( {\tau^{2}} +h^{k+\frac{1}{2}} )M, \label{eq_06}
\end{align}
where  the constant $C$ depends on the Sobolev norms in the right hand side of the approximation bounds of Lemma \ref{Approximation} and the truncation errors \eqref{estimates1}--\eqref{estimates3}. The latter are also applied to certain differential operators, with the highest order given by $\norm{  \partial_t^2 \Delta {u}}_{L^2((0,T);L^2(\Omega))}$. $M$ is given by \eqref{th1_m} in the case $k=1$ and \eqref{th2_m} in the case $k \ge 2$.
In addition, it holds
\begin{align}\label{th1p2_u}
  \Tnorm{\pi_h u-u_h,\pi_h p-p_h}   \leq  \sqrt{\psi T} C (\tau^2 +h^{k+\frac{1}{2}} )M.
\end{align}
\end{theorem}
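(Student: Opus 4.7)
The strategy is to apply the stability theorems just established to the discrete error system \eqref{eq_09}. By construction the pair $(e_h^n,\zeta_h^n)$ solves \eqref{eq:momentum_CN}--\eqref{eq:mass_CN} with right hand sides $L^{n+1}$ and $\tilde L^{n+1}$ from \eqref{l_eq}, and with vanishing initial data $e_h^0 = e_h^1 = 0$. The verification immediately following \eqref{l_eq} ensures $\tilde L^{n+1}$ annihilates $\ker S_h^p$, so $\|\tilde L^{n+1}\|_{\mathrm{div}}$ is well-defined. The stability bounds \eqref{th_CN_1}--\eqref{th1p2} (using the condition $Co_{\max}$ small when $k \ge 2$) or \eqref{th_CN_2}--\eqref{th2p2} (using the weaker $Co$ small when $k=1$), applied to $(e_h^n,\zeta_h^n)$, then yield
\[
\max_{2\le n\le N}\|e_h^n\|^2 + \Tnorm{\bar e_h,\zeta_h}^2 \lesssim \Big(\tau\sum_{n=1}^{N-1}\|L^{n+1}\|_h^2 + \tau\sum_{n=1}^{N}\|\tilde L^{n}\|_{\mathrm{div}}^2\Big) M.
\]
It remains to show that the two consistency sums are $O(\tau^4 + h^{2k+1})$; the claimed bounds \eqref{eq_06} and \eqref{th1p2_u} then follow, the latter after a triangle inequality to swap $\bar e_h$ for $e_h$ at each time level.

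\textbf{Momentum consistency $\|L^{n+1}\|_h$.} I would bound each $\Psi_j^{n+1}(v_h)$ in terms of $(\|v_h\|_E^2 + \|v_h\|^2)^{1/2}$. The purely temporal truncation pieces $\Psi_1,\Psi_4,\Psi_6$ are handled by the standard Taylor bounds \eqref{estimates1}--\eqref{estimates3}, combined with \eqref{estimate_Ch} for $\Psi_4$ and with \eqref{invA} plus \eqref{enery_est} for $\Psi_6$; each contributes $O(\tau^{5/2})$ and hence $O(\tau^4)$ after squaring and summing. The spatial terms use Lemma \ref{Approximation}: $\Psi_2$ is treated by expanding $(G_h\kappa_h^{n+1},v_h)_\Omega = -(\kappa_h^{n+1},\nabla\cdot v_h)_\Omega + (\kappa_h^{n+1},v_h\cdot n)_{\partial\Omega}$ with $\|\kappa_h\|\lesssim h^{k+1}$; $\Psi_5$ is bounded directly via the Nitsche and bulk pieces of $A_h$, producing $O(\tau\mu^{1/2}h^k)=O(\tau h^{k+1/2}\beta_\infty^{1/2})$ under $Re\ge 1$; and $\Psi_3$ is bounded by writing $C_h = (\beta\cdot\nabla)+ s_u$, integrating by parts, and using the CIP consistency estimate $|\hat\eta_h^{n+1}|_{s_u}\lesssim \beta_\infty^{1/2}h^{k+1/2}$ along with \eqref{interpinq} to convert the streamline term into the $\|v_h\|_E$ norm. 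The net effect is $\tau\sum_n\|L^{n+1}\|_h^2 \lesssim \tau^4 + h^{2k+1}$.

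\textbf{Divergence consistency $\|\tilde L^{n+1}\|_{\mathrm{div}}$ and conclusion.} The pressure stabilization term $\Phi_2(q_h) = s_p(\kappa_h^{n+1},q_h)$ is controlled immediately by Cauchy--Schwarz and $|\kappa_h^{n+1}|_{s_p}\lesssim h^{k+1/2}$. For $\Phi_1(q_h)=(G_h^*\eta_h^{n+1},q_h)_\Omega = -(\eta_h^{n+1},\nabla q_h)_\Omega$ (the boundary contributions from the two integrations by parts cancel), I exploit the $L^2$-orthogonality $(v_h,\eta_h^{n+1})_\Omega = 0$ for all $v_h\in V_h$: choosing $v_h$ to minimise $\|\nabla q_h - v_h\|$ and applying \eqref{pres_inter} gives
\[
|\Phi_1(q_h)| \le \|\eta_h^{n+1}\|\inf_{v_h\in V_h}\|\nabla q_h - v_h\| \lesssim h^{k+1}\cdot h_\beta^{-1/2}|q_h|_{s_p} \lesssim h^{k+1/2}|q_h|_{s_p}.
\]
Thus $\tau\sum_n\|\tilde L^n\|_{\mathrm{div}}^2 \lesssim h^{2k+1}$, and substituting into the stability bound concludes the proof. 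The main obstacle is the delicate balance for $\Psi_3$ and $\Phi_1$: the gradient-jump stabilization supplies precisely the $h^{1/2}$ factor needed to reach $h^{k+1/2}$ for the convection consistency, and the $L^2$-orthogonality of $\pi_h$ against $V_h$ is what upgrades the naive $\|\eta_h\|\|\nabla q_h\|\sim h^k$ estimate for $\Phi_1$ to the desired $h^{k+1/2}$; without either mechanism, the argument would degrade to the suboptimal $h^k$ rate.
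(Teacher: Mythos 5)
Your proposal follows the same route as the paper: apply the stability estimates \eqref{th_CN_1}--\eqref{th1p2} (for $k\ge 2$) or \eqref{th_CN_2}--\eqref{th2p2} (for $k=1$) to the error system \eqref{eq_09}, then bound the consistency functionals $\Psi_1$--$\Psi_6$ and $\Phi_1$--$\Phi_2$ using the truncation estimates \eqref{estimates1}--\eqref{estimates3} for the temporal terms and the stabilization-based interpolation bounds \eqref{interpinq}, \eqref{interdiv}, \eqref{pres_inter} together with Lemma \ref{Approximation} for the spatial terms. The paper's own proof is terser (it defers $\Psi_2$--$\Psi_5$ and $\Phi_1$--$\Phi_2$ to \cite[Theorem 5.5]{BGG23}), but the mechanisms you identify — in particular the $L^2$-orthogonality of $\pi_h$ combined with the discrete interpolation estimates to recover the extra $h^{1/2}$ for $\Psi_2$, $\Psi_3$ and $\Phi_1$ — are exactly the ones it invokes.
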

\begin{proof} Using  \eqref{th_CN_1} or \eqref{th_CN_2} we have,
\begin{align}\label{mainthmaux}
    \|e_h^N\|^2 \leq  ({\tau \sum_{n=1}^{N-1}\norm{L^{n+1}}_h^2}+\tau \sum_{n=1}^{N} \|\tilde{L}^{n}\|_{\mathrm{div}}^2)M.
\end{align}
It remains to bound $\Psi_1$ -- $\Psi_6$ and $\Phi_1$ -- $\Phi_2$. The terms $\Psi_2$--$\Psi_5$ as well as $\Phi_1$--$\Phi_2$ are handled as in \cite[Theorem 5.5]{BGG23}. In particular $\Psi_2$, $\Psi_3$ and $\Phi_1$ use the stabilization bounds \eqref{interpinq}, \eqref{interdiv} and \eqref{pres_inter}. $\Psi_1$ and $\Psi_6$ are bounded using \eqref{estimates1} and \eqref{estimates2},
\begin{alignat*}{1}
\Psi_1^{n+1}(v_h) \lesssim  \tau^{\frac52} \|\partial^3_t{u}\|_{L^2(t_n, t_{n+1};[ L^2(\Omega)]^d)} \|v_h\|.
\end{alignat*}
\begin{align*}
    \Psi_6^{n+1}(v_h)  \lesssim \tau  \mu\| \Delta(\bar u^{n+1}- u(t^{n+\frac12})\| \| v_h\|  &\lesssim \mu \tau^{\frac52}  \norm{  \partial_t^2 \Delta {u}}_{L^2((t^n,t^{n+1});L^2(\Omega)}  \|v_h\|.
\end{align*}
Collecting the bounds of the $\Psi_i$, $i=1,...,6$ and $\Phi_i$, $i=1,...2$ in \eqref{mainthmaux} we arrive at \eqref{eq_06}. Using \eqref{th1p2} or \eqref{th2p2} we then deduce \eqref{th1p2_u}.

\end{proof}

\subsection{Error estimates for the pressure approximation}
In this section we will discuss error bounds for the pressure in the high Reynolds regime. First we state a sub-optimal bound in the $H^1$-norm. Then, for the time averaged pressure we give a bound of similar order as those for the velocities. The proofs of these results are straightforward combining the above error estimates with the arguments of the pressure error analysis of \cite[Proposition 5.6 and Theorem 5.7]{BGG23}.
\begin{proposition}
Assume that $h^2 \lesssim \tau$ then there holds for $N \ge 4$
\begin{align}
\|p^{N}-p^{N}_h\|^2_{H^1(\Omega)} \lesssim \sqrt{\psi T} ( \tau^2/h + h^{k-\frac{1}{2}} )M. 
\end{align}
\end{proposition}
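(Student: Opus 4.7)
The plan is to decompose $p^N - p_h^N = -\kappa_h^N + \zeta_h^N$, where $\kappa_h^N := \pi_h p^N - p^N$ is the interpolation error and $\zeta_h^N := \pi_h p^N - p_h^N \in Q_h$ is the discrete part. The contribution of $\kappa_h^N$ is handled directly by Lemma \ref{Approximation}, giving $\|\kappa_h^N\|_{H^1(\Omega)}^2 \lesssim h^{2k}$, which is dominated by the $h^{k-\frac12}$ term of the claim for $k \ge 1$. The work is then concentrated on the discrete error $\zeta_h^N$.

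For the discrete part I would first apply the inverse estimate \eqref{inverse} to obtain $\|\nabla \zeta_h^N\|^2 \lesssim h^{-2}\|\zeta_h^N\|^2$, and then invoke a CIP-type stabilized inf-sup condition of the form
\[
\|\zeta_h^N\|^2 \lesssim \sup_{v_h \in V_h^0,\, v_h \neq 0} \frac{|(\zeta_h^N, \nabla \cdot v_h)_\Omega|^2}{\|\nabla v_h\|^2} + |\zeta_h^N|_{s_p}^2,
\]
with $V_h^0 := V_h \cap [H^1_0(\Omega)]^d$. For test functions vanishing on $\partial \Omega$ there holds $(\zeta_h^N, \nabla \cdot v_h)_\Omega = (G_h \zeta_h^N, v_h)_\Omega$, and the momentum error equation \eqref{eq_09} taken at $n+1=N$ rearranges into
\[
(G_h \zeta_h^N, v_h)_\Omega = -\tau^{-1}(\delta e_h^N, v_h)_\Omega - (C_h^{N-\frac12}\hat e_h^N, v_h)_\Omega - (A_h \bar e_h^N, v_h)_\Omega + L^N(v_h).
\]
Cauchy-Schwarz combined with the inverse estimates \eqref{estimate_Ch} and \eqref{invA}, together with the consistency-term estimates for $\Psi_j^N$ and $\Phi_j^N$ developed in the proof of Theorem \ref{error_cn}, reduces the task to controlling $\|\delta e_h^N\|$, $\|\hat e_h^N\|$, $\|\bar e_h^N\|_A$, and $|\zeta_h^N|_{s_p}$ pointwise at $n = N$.

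The principal obstacle is that Theorem \ref{error_cn} supplies pointwise-in-time control only on $\|e_h^n\|$, whereas the gradient terms and the pressure stabilization are available only in the averaged triple-norm \eqref{th1p2_u}, and the second-order time increment $\|\delta \delta e_h\|$ only in the averaged seminorm of Lemma \ref{lem:time_diss}. The assumption $N \ge 4$ is used to perform a short-window averaging around $n = N$, converting these sums into pointwise bounds at the price of a factor $\tau^{-1/2}$. The hypothesis $h^2 \lesssim \tau$ then balances the $h^{-1}$ factor from the inverse inequality against the pointwise velocity rate $O(\tau^2 + h^{k+\frac12})$ obtained from Theorem \ref{error_cn}, yielding the claimed bound $\sqrt{\psi T}\,(\tau^2/h + h^{k-\frac12})\,M$. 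The detailed bookkeeping follows \cite[Proposition 5.6]{BGG23} for the BDF2 case, with the extrapolation-induced $\delta\delta u_h$ contributions now handled through Lemma \ref{lem:time_diss}.
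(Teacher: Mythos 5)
Your route is structurally different from the paper's, and the difference is where the argument breaks. The paper bounds $\|\nabla(p^N-p_h^N)\|$ by inserting $\pi_h\nabla p_h^N=G_h p_h^N$, controls $\|\nabla p_h^N-\pi_h\nabla p_h^N\|$ with the interpolation estimate \eqref{pres_inter} (costing only $h_\beta^{-1/2}|p_h^N|_{s_p}$), and estimates $\|G_h\zeta_h^N\|$ by testing the momentum error equation with $G_h\zeta_h^N$ itself, in the spirit of Lemma \ref{Ghlemma_CN}. The decisive feature of that test function is that the time-increment term becomes $\tau^{-1}(\delta e_h^N,G_h\zeta_h^N)_\Omega=-\tau^{-1}(G_h^*\delta e_h^N,\zeta_h^N)_\Omega$, which by the discrete mass equation in \eqref{eq_09} equals $-\tau^{-1}\bigl(\delta\tilde L^N(\zeta_h^N)-s_p(\delta\zeta_h^N,\zeta_h^N)\bigr)$: the $\tau^{-1}$ lands on pressure-stabilization increments that the triple norm and the $\delta\delta$-seminorm do control.

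Your proposal replaces this with an inverse inequality plus a stabilized inf-sup condition tested with a generic $v_h\in V_h^0$, and then the momentum equation produces the term $\tau^{-1}(\delta e_h^N,v_h)_\Omega$. No pointwise-in-time bound on $\|\delta e_h^N\|$ of size $\tau\times(\tau^2+h^{k+\frac12})$ is available anywhere in the paper: Theorem \ref{error_cn} gives only $\max_n\|e_h^n\|\lesssim(\tau^2+h^{k+\frac12})M$, so the best crude estimate is $\tau^{-1}\|\delta e_h^N\|\lesssim(\tau+\tau^{-1}h^{k+\frac12})M$, and after the inverse inequality your bound for $\|\nabla\zeta_h^N\|$ contains $h^{-1}\tau M$, which under the hyperbolic CFL $\tau\sim h$ does not decay at all, let alone at the claimed rate. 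Your ``short-window averaging'' for $N\ge 4$ can indeed convert the averaged bounds on $|\cdot|_{s_p}$, $\|\cdot\|_A$ and $\|\delta\delta\,\cdot\|$ into pointwise ones at the price of $\tau^{-1/2}$, but there is no averaged bound on $\sum_n\|\delta e_h^{n+1}\|^2$ in the paper (only on $\sum_n\|\delta\delta e_h^{n+1}\|^2$ via Lemma \ref{lem:time_diss}), so this device never reaches the one term that dominates. To close the gap you must exploit the mass equation on $G_h^*\delta e_h^N$ as Lemma \ref{Ghlemma_CN} does, which forces the test function to be $G_h\zeta_h^N$ rather than an arbitrary inf-sup candidate; doing so also removes the secondary $h^{-1}$ loss incurred by applying \eqref{inverse} to $\zeta_h^N$ instead of using \eqref{pres_inter}.
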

The following result provides an error estimate for the average pressure.
\begin{theorem}
Let $p$ be the solution of (\ref{eq111}) and $\{p^n_h\}^N_{n=0}$
 be the solution of (\ref{eq:momentum_CN})--(\ref{eq:mass_CN}). Let $T=N \tau$, $Re >1$. Suppose that $Co$  is sufficiently small when $k=1$ and $\max \{Co, Co_{4/3}\}$ is sufficiently small  when $k \geq 2$. Then,
\begin{align}
   \norm{& \bar  p^N-\bar p_h^N} \leq C  h^{k+1}\norm{ \bar p^N}_{H^{k+1}(\Omega)}+   C Y\left( \tau^2 +h^{k+\frac{1}{2}} \right) M, \label{eq_07}
\end{align}
where
$
Y$ depends on $\beta_{\infty}$, $T$ and $\phi$.
The values of $C$, $\phi$, and $M$ the same as in Theorem \ref{error_cn}.
\end{theorem}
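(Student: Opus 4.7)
The plan is to split the pressure error via the $L^2$--projection onto $W_h$, writing
\[
\bar p^N - \bar p_h^N = -\bar\kappa_h^N + \bar\zeta_h^N,
\]
where $\bar\kappa_h^N = \pi_h\bar p^N - \bar p^N$ and $\bar\zeta_h^N = \pi_h \bar p^N - \bar p_h^N$. The projection term is controlled directly by $h^{k+1}\|\bar p^N\|_{H^{k+1}(\Omega)}$ via Lemma \ref{Approximation}, producing the first summand on the right of \eqref{eq_07}. All the work goes into bounding $\|\bar\zeta_h^N\|$.

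To extract $\|\bar\zeta_h^N\|$ I would invoke the continuous inf--sup condition for Stokes on $\Omega$: choose $v \in [H_0^1(\Omega)]^d$ with $\nabla\cdot v = \bar\zeta_h^N$ (subtracting its mean, which is zero since $\bar\zeta_h^N \in Q_h$) and $\|v\|_{H^1(\Omega)} \lesssim \|\bar\zeta_h^N\|$, then apply a Fortin-type interpolant $v_h \in V_h$ satisfying $(\nabla\cdot(v-v_h),q_h)_\Omega = 0$ for all $q_h \in Q_h$ and $\|v_h\|_{H^1} \lesssim \|v\|_{H^1}$. Averaging the momentum error equation \eqref{eq_09} over $n = N-2$ and $n = N-1$ gives an identity in which the pressure contribution is $\tau(G_h \bar\zeta_h^N, v_h)_\Omega$. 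Using $(G_h q_h, v_h)_\Omega = -(q_h,\nabla\cdot v_h)_\Omega + (q_h,v_h\cdot n)_{\partial\Omega}$, the Fortin property and $v|_{\partial\Omega}=0$, this becomes $\tau\|\bar\zeta_h^N\|^2$ up to boundary contributions that are absorbed using the Nitsche terms and a trace inequality.

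The remaining terms, evaluated at the test function $v_h$, are the time--difference $\tau^{-1}(\delta \bar e_h^N,v_h)_\Omega$, the convective contribution $(C_h^{\cdot}\hat{\bar e}_h^N,v_h)_\Omega$, the viscous contribution $(A_h \bar{\bar e}_h^N,v_h)_\Omega$, and the consistency defects $\Psi_1,\dots,\Psi_6,\Phi_1,\Phi_2$ from \eqref{l_eq}. Using $\|v_h\| \lesssim \|v\|_{H^1} \lesssim \|\bar\zeta_h^N\|$ and $\|v_h\|_{H^1} \lesssim \|\bar\zeta_h^N\|$ throughout: the viscous term is bounded by $\mu^{\tfrac12}\|\bar e_h^N\|_A\|\bar\zeta_h^N\|$; the time--difference term, after dividing by $\tau$, is handled either by the velocity error bound \eqref{eq_06} applied at two consecutive indices together with the second--order increment bound from Lemma \ref{lem:time_diss}, or by averaging to recognise $\tau^{-1}\delta \bar e_h^N$ as an approximation of $\partial_t u$; the consistency terms $\Psi_i$, $\Phi_i$ are bounded exactly as in Theorem \ref{error_cn}, giving rise to the $\tau^2 + h^{k+\frac12}$ rate.

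The main obstacle is the convective term $(C_h^{n+\tfrac12}\hat{\bar e}_h^N, v_h)_\Omega$: the only control on $\hat{\bar e}_h^N$ coming from the stability analysis is through the $s_u$--seminorm (inside the triple norm) and the $L^2$--bound \eqref{eq_06}. Splitting
\[
(C_h \hat{\bar e}_h^N, v_h)_\Omega = (\beta\cdot\nabla \hat{\bar e}_h^N, v_h - \pi_h v_h)_\Omega + (\beta\cdot\nabla\hat{\bar e}_h^N,\pi_h v_h)_\Omega + s_u(\hat{\bar e}_h^N,v_h)
\]
and integrating by parts on the second term to move the derivative onto $\pi_h v_h$, one controls the first piece by approximation of $v$ and \eqref{interpinq}, the second piece by $\beta_\infty \|\hat{\bar e}_h^N\|\,\|v_h\|_{H^1}$, and the stabilization piece by $|\hat{\bar e}_h^N|_{s_u} h^{-\tfrac12}\beta_\infty^{\tfrac12}\|v_h\|$ using \eqref{stab_bound}. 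These estimates introduce factors of $\beta_\infty$, $\beta_{1,\infty}$ and, when summed and combined with \eqref{th1p2_u}, the growth factor $M$ and a prefactor $Y = Y(\beta_\infty,T,\phi)$. Collecting the contributions completes the estimate \eqref{eq_07}; the detailed bookkeeping mirrors \cite[Theorem 5.7]{BGG23}, replacing the BDF2 expressions with their Crank--Nicolson analogues and invoking Theorem \ref{error_cn} in place of the corresponding BDF2 velocity bound.
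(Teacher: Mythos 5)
Your overall architecture (split off the projection error, then estimate the discrete part $\bar\zeta_h^N$ by testing the momentum error equation against a divergence--lifting of $\bar\zeta_h^N$) is the right one, and your treatment of the convective term is close to what the paper does. However, there are two concrete gaps.

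First, and most importantly, you average the error equation \eqref{eq_09} over only the two indices $n=N-2,\,N-1$, so the time--difference contribution you must control is $\tau^{-1}(\delta \bar e_h^{N}, v_h)_\Omega$. Neither of your two suggestions closes this: applying \eqref{eq_06} at two consecutive indices gives only $\tau^{-1}\|\delta \bar e_h^{N}\| \lesssim \tau^{-1}(\tau^2+h^{k+\frac12})M$, i.e.\ a loss of a full power of $\tau$ (under a CFL condition this degrades $h^{k+\frac12}$ to $h^{k-\frac12}$), and Lemma \ref{lem:time_diss} controls second--order increments $\sum_n\|\delta\delta u_h^{n+1}\|^2$, not $\tau^{-1}\|\delta e_h^N\|$; no bound on the discrete time derivative of the error is available in the paper. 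The theorem is in fact about the \emph{global} discrete time average $\bar p^N = N^{-1}\sum_n p^n$ (this is why it is announced as an estimate ``for the time averaged pressure''), and the paper's proof averages \eqref{eq_09} over all $n=1,\dots,N-1$ so that the time--difference term telescopes to $(N\tau)^{-1}(e_h^N-e_h^1,\cdot)_\Omega \lesssim T^{-1}\max_n\|e_h^n\|$, which is then controlled by \eqref{eq_06} without any loss. Your two--point average cannot reach the stated rate.

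Second, the Fortin interpolant you invoke, with $(\nabla\cdot(v-v_h),q_h)_\Omega=0$ for all $q_h\in Q_h$ and $H^1$--stability, does not exist here: the spaces are equal--order continuous $P_k/P_k$, which violate the discrete inf--sup condition (that is the entire reason for the pressure stabilization $s_p$), and such a Fortin operator would imply that condition. The paper instead uses the plain $L^2$--projection $\pi_h \bar v_p$ and pays for the missing orthogonality with the term $(\bar\zeta_h^N,\nabla\cdot(\bar v_p-\pi_h\bar v_p))_\Omega$, which after integration by parts is bounded by $C h^{\frac12}\beta_\infty^{\frac12}|\bar\zeta_h^N|_{s_p}\|\bar\zeta_h^N\|$ using \eqref{pres_inter}, and then by the $s_p$--part of the triple norm via \eqref{th1p2_u}. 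This step is where one of the $h^{k+\frac12}$ contributions and part of the factor $Y(\beta_\infty,T,\phi)$ actually come from, and it is absent from your argument.
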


\section{Splitting methods derived from the IMEX method}\label{sec:split}
We showed in \cite[Section 6]{BGG23} that for the case of inviscid flow (or very high Reynolds
flow where the viscous dissipation may be treated explicitly) the IMEX method
 allows for a natural decoupling strategy. In algorithm \ref{alg:split} below we present such an algorithm for the IMEX Crank-Nicolson method. The proof is similar to that of \cite{BGG23} and therefore omitted.
\begin{algorithm}
 \flushleft
Given $u_h^{n}$ and $u_h^{n-1}$ we compute
\begin{enumerate}
\item Let $u^* = 1.5 u_h^{n} - 0.5 u_h^{n-1}$ and find $C_h u_h^* \in V_h$ such that
\[
(C_h u_h^*, v_h)_{\Omega} = (\beta \cdot \nabla u_h^*, v_h)_{\Omega} + s_u(u_h^*,v),
\]
for all $v_h \in V_h$.  
\item Let
\begin{equation}\label{eq:RC}
R_C(v) = -(C_h u_h^*, v)_{\Omega}+(\tau^{-1} u_h^{n},v)_{\Omega}.
\end{equation}
for all $v \in L^2(\Omega)$.
\item Solve for the pressure: Find $p_h^{n+1} \in Q_h$ such that
\[
(\nabla p_h^{n+1},\nabla q_h)_{\Omega} = R_C(\nabla q_h), \, \forall q_h \in Q_h.
\]
\item Solve for $u_h^{n+1} \in V_h$ such that
\[
(\tau^{-1} u_h^{n+1},v_h)_{\Omega}
= (p_h^{n+1},\nabla \cdot v_h)_{\Omega} - (p_h^{n+1},v_h \cdot n)_{\partial
  \Omega} + R_C(v_h),\, \forall v_h \in V_h.
\]
\end{enumerate}
\caption{Splitting scheme for the inviscid flow equations using stabilised FEM}\label{alg:split}
\end{algorithm}
To obtain a scheme that is stable in both the inviscid and the viscous regime a modification is proposed in algorithm \ref{alg:split2}. Here the viscous term is handled explicitly in the pressure Poisson equation, but implicitly in the velocity transport step. The stability of algorithm \ref{alg:split2} is explored in the numerical section and the method appears robust in all regimes under similar CFL conditions as above. The analysis of algorithm \ref{alg:split2} is beyond the scope of the present work.
\begin{algorithm}
 \flushleft
Given $u_h^{n}$ and $u_h^{n-1}$ we compute
\begin{enumerate}
\item Let $u^* = 1.5 u_h^{n} - 0.5 u_h^{n-1}$ and find $D_h u_h^* \in V_h$ such that
\[
(D_h u_h^*, v_h)_{\Omega} = (C_h u_h^*, v_h)_{\Omega} + (A_h u_h^*, v_h)_{\Omega},
\]
for all $v_h \in V_h$.  
\item Let
$
R_D(v) = -(D_h u_h^*, v)_{\Omega}+(\tau^{-1} u_h^{n},v)_{\Omega}.
$
for all $v \in L^2(\Omega)$.
\item Solve for the pressure: Find $p_h^{n+1} \in Q_h$ such that
\[
(\nabla p_h^{n+1},\nabla q_h)_{\Omega} = R_D(\nabla q_h), \, \forall q_h \in Q_h.
\]
\item Solve for $u_h^{n+1} \in V_h$ such that $\forall v_h \in V_h$
\[
(\tau^{-1} u_h^{n+1},v_h)_{\Omega} + (A_h \bar u_h^{n+1}, v_h)_{\Omega}
= (p_h^{n+1},\nabla \cdot v_h)_{\Omega} - (p_h^{n+1},v_h \cdot n)_{\partial
  \Omega} + R_C(v_h)
\]
where $R_C$ is defined in \eqref{eq:RC}.
\end{enumerate}
\caption{Splitting scheme for the viscous flow equations using stabilised FEM}\label{alg:split2}
\end{algorithm}
\section{Numerical examples}\label{sec:numeric}
We consider the Navier-Stokes equations in two spatial dimensions with unit density. Below we will report results on the Taylor--Green vortex and 
we present computations on the Kelvin-Helmholtz shear layer and a modified version of the benchmark of flow around a cylinder at Reynolds number 20. 
The main observation are that the extrapolated/split scheme is as robust as the monolithic scheme under the CFL condition and has the same accuracy, as predicted by theory. However in the Kelvin-Helmholtz case where transition is important, the IMEX methods go into transition slightly earlier than the solutions of the monolithic scheme showing the influence of the additional dissipation of the splitting.

\subsection{Convergence Study: Taylor--Green vortex} Consider a 2D Taylor-Green vortex moving at the phase speed of one in the $x$ direction. It was proposed by Minion and Saye \cite{Minion:2018:Saye} and had the exact solution.
\begin{align} \label{exact_sol}
    u_1^{ex}(x,y,t):&=1+\sin(2\pi (x-t))\cos(2\pi y)\exp(-8\pi^2 \mu t),  \nonumber\\
    u_2^{ex}(x,y,t):&=-\cos(2\pi (x-t))\sin(2\pi y)\exp(-8\pi^2 \mu t), \nonumber\\
    p^{ex}(x,y,t):&=\frac{1}{4}(\cos(4\pi (x-t))+\cos(4\pi y))\exp(-16\pi^2 \mu t).
\end{align}
Consider the model problem \ref{eq:convdiff} with coefficients  $\mu= 3.571\cdot 10^{-6}$ in the domain $\Omega=[0,1]\times [0,1]$. 
The right-hand side function $f$ and the Dirichlet boundary conditions are chosen according to the exact solution (\ref{exact_sol}). 
The numerical simulations are performed using the Crank-Nicolson time-stepping combined with implicit-explicit treatment of the velocity \eqref{eq:momentum_CN}--\eqref{eq:mass_CN}.
The viscosity is chosen to correspond to a high Reynolds number. The stabilization parameter for the discrete variational formulation (\ref{eq:momentum_CN}) is chosen as  $\gamma_u=0.001$. A stabilization parameter was added to the pressure stabilization \eqref{pres_stab} and set to $\gamma_p = 0.001$ in the numerical examples.
For piecewise affine $P_1$ approximations, the hyperbolic CFL is used, i.e., $\tau= Co\, h \;(\beta_{\infty} = 1)$, $Co = 0.05$  and for piecewise quadratic $P_2$ approximations, the 4/3-CFL, $\tau= Co_{4/3} h^{4/3} $ is used with $Co_{4/3} = 0.025$. The Courant numbers were not optimized for stability but were taken similarly to those in \cite{BEF17}.
Set $(\tau,h) \in \{(0.05 \cdot 0.2/2^i, 0.2/2^i)\}^4_{i=1}$ for $P_1$ approximations and $\tau=0.025h^{4/3}$, $h \in \{0.2/2^i\}^4_{i=1}$ for $P_2$ approximations. 
Next, the convergence plots for the \eqref{eq:momentum_CN}--\eqref{eq:mass_CN} and (\ref{alg:split}) schemes are illustrated in Figure \ref{time_error_1111}. The errors are measured with respect to  $\norm{u(T)- u_h^N}$ and $\norm{p(T)- p_h^N}$ at $T=1$.
The values on the $x$-axis represent space step sizes, while the values on the $y$-axis represent $L^2$-errors in velocity and pressure. Dotted (resp. dashed) lines represented the CN- split (resp. CN-imex) scheme. Triangle markers describe the $L^2$-error of the velocity, and squares markers represent pressure. The slopes are depicted by solid lines markers. We can observe a convergence $\mathcal{O}(h^{2.5})$ in $ {L}^{2}$-norm for the velocity and pressure with $P_2$ approximation, and a convergence $\mathcal{O}(h^{2})$ in the ${L}^{2}$-errors of the velocity and pressure with $P_1$ approximation.
 As per expectations, the computational results agree with the theoretical predictions.

\begin{figure}[h!]
\centering 
{\includegraphics[width=0.4\linewidth, angle=0]{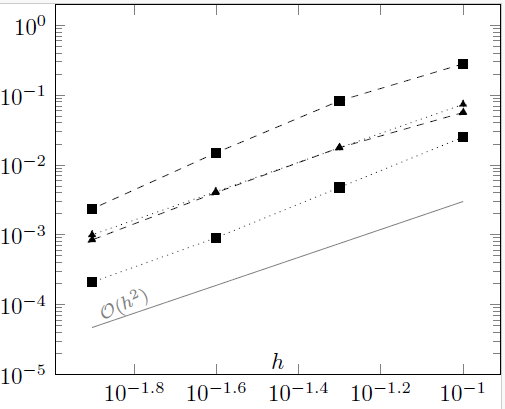}} \hspace{.35 cm}
 \includegraphics[width=0.4\linewidth, angle=0]{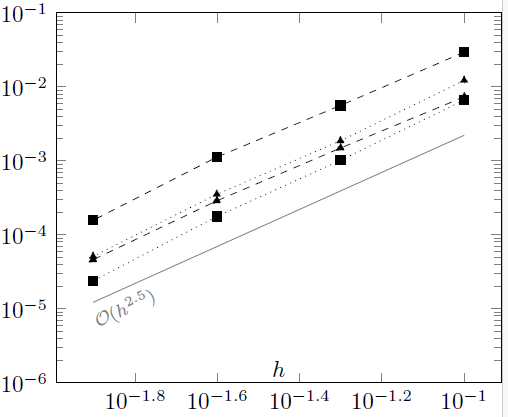} 
\caption{ Convergence plot of the Crank-Nicolson discretization applied to the problem defined by the exact solution (\ref{exact_sol}). Dashed  (resp. Dotted) lines represented the CN- IMEX (resp. CN-Split) scheme. Triangle markers describe the $ L^2$-error of the velocity, and squares markers represent pressure. Solid lines mark depicts the slopes.
left:  $P_1$ approximation of velocity and pressure; right: $P_2$ approximation of velocity and pressure.  
  }\label{time_error_1111}
\end{figure}


	\subsection{Navier--Stokes' mixing layer at $\Re=10^4$}
	We propose a qualitative study of a Kelvin--Helmholtz shear
	layer. This test was proposed in \cite{LSLC88} as a model problem for 2D
	turbulence and was shown to have the characteristic cubic decay of the power spectrum. 
	A schematic illustration of the problem setup is presented in Figure \ref{fig:KH_config}. The computational domain $\Omega$ is the unit square, the physical parameters are set as $\boldsymbol{u}_\infty = 1$, $\sigma_0 = \frac{1}{28}$, and
	$\mu = 3.571\cdot 10^{-6}$ leading to a Reynolds number based on the layer width of
	$Re= 10000$ (see Figure. \ref{fig:KH_config}). The initial data is taken as the sum of a smooth, but strongly variying velocity field and a small perturbation
	\begin{align*}
		u_0=\begin{pmatrix}
			1 \\
			0 
		\end{pmatrix}u_{\infty} \tanh\Bigl(\frac{2y-1}{\sigma_0}\Bigr)+\begin{pmatrix}
			\partial_y{\xi} \\
			-\partial_x{\xi} 
		\end{pmatrix}.
	\end{align*}
	The stream function $\xi$ specifies the form of the perturbation. In our case four vortices given by 
	\[\xi=c u_{\infty}\exp\Bigl(-\frac{(y-0.5)^2}{\sigma^2_0}\Bigr)\cos(\theta x)\]
	where $c$ is a parameter giving the strength of the perturbation, here we have chosen $c = 0.001$, and $\theta=8\pi$.
	We present snapshots at the non-dimensional times $t=80,120, 140$ using the polynomial order $k=2$ on a $80 \times 80$ mesh and the Courant number $Co_{4/3}=0.025$ resulting in the time step $7.252 \cdot
	10^{-5}$, i.e., $\tau=Co_{4/3}h^{4/3}$ and $h=0.0125$. 
	Observe that the splitting method is valid here also for the Navier-Stokes' equations; since the mesh Reynolds number is so large, the viscous term can be handled explicitly. The objective is to determine if the use of second-order time discretization remains stable while delivering the expected enhanced accuracy, as well as to determine how splitting influences the approximation accuracy.
	We compare with the monolithic solutions using implicit continuous interior
	penalty stabilization for both velocities and pressures with  $k=2$ and a $80\times 80$ mesh and the time step $\tau=3.125 \cdot 10^{-4}$. Figure \ref{fig:KH_split_vs_monol} shows a comparison between the solutions obtained using the Crank-Nicolson discretization using either the imex scheme (a) or the split method (b) on a mesh with $80 \times 80$ elements using piecewise quadratic approximation. We report a series of snapshots at the non-dimensional times $t=80, 120, 140.$ In Figure \ref{fig:KH_split_vs_monol} (c), we present the reference solution obtained using CN-time discretization and a monolithic solver using the same space discretization and at the same time levels. Figure \ref{fig:KH_split_vs_monol} (a)--(b) shows that the solutions of both imex and split schemes are of similar quality. The two vortices are merging in the first snapshot at $t=80.$ The solution obtained using a monolithic solver is entering the transition phase in the first snapshot. A possible explanation of this is that the imex and split schemes are more dissipative since it is known that excessive dissipation tends to speed up the transition sequence. 
	In Figure \ref{fig:KH_split_vs_monol_1}, we studied the time evolution of the kinetic energy, physical dissipation and artificial dissipation for different schemes. 
	The evolution of kinetic energy is given in the 
	of Figure \ref{fig:KH_split_vs_monol_1}(a). Here the loss of kinetic energy on  $P_2$ mesh is clear.  The transition points can also be studied in the plots
	of artificial or physical dissipation of Figures. \ref{fig:KH_split_vs_monol_1}(b) and \ref{fig:KH_split_vs_monol_1}(c).

	\begin{figure}[h!]
		\centering
		\includegraphics[width=0.55\linewidth]{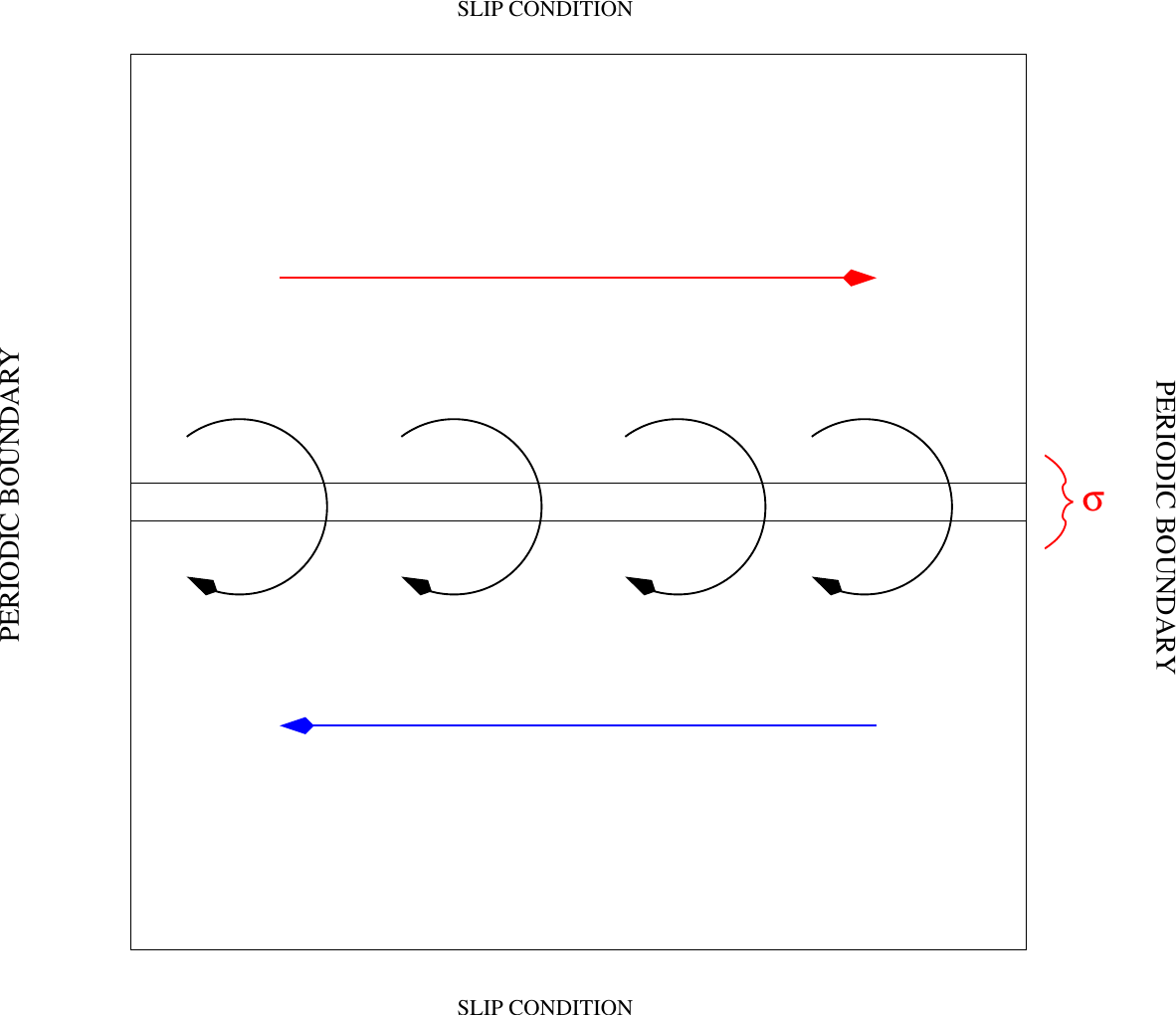}
		\caption{Computational configuration for the Kelvin--Helmholtz shear
			layer instability}\label{fig:KH_config}
	\end{figure}

	\begin{figure}[h!]
		\centering 
		{\includegraphics[width=0.41\linewidth, angle=0]{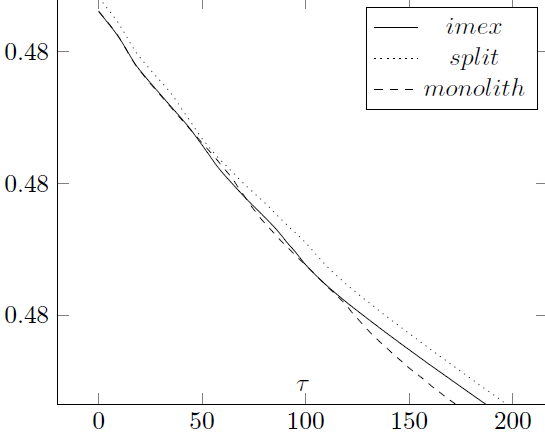} \hspace{.3 cm}
			\includegraphics[width=0.4\linewidth, angle=0]{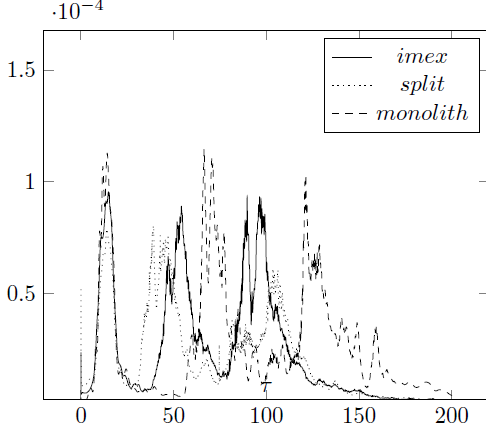} 
			\includegraphics[width=0.4\linewidth, angle=0]{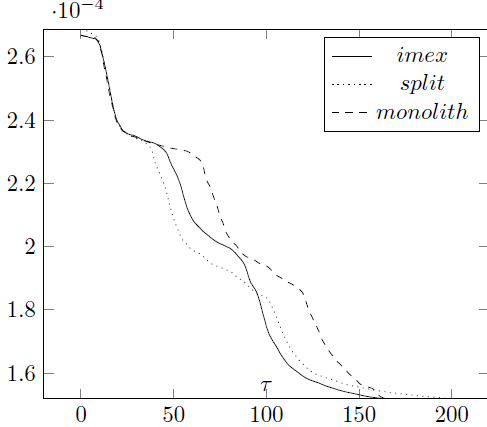}} \\
		\caption{Time evolution of (left to right) the kinetic energy, artificial dissipation and physical dissipation in computational mesh: $80 \times 80$; piecewise quadratic approximation. 
		}\label{fig:KH_split_vs_monol_1}
	\end{figure}

\begin{figure}[h!]
\centering 
\subfigure[CN-imex, $k=2$, $80 \times
80$ mesh]{\includegraphics[width=0.3\linewidth, angle=0]{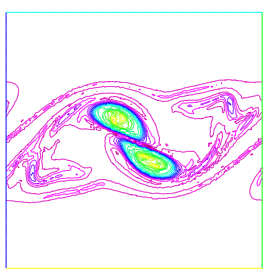}
	\includegraphics[width=0.3\linewidth, angle=0]{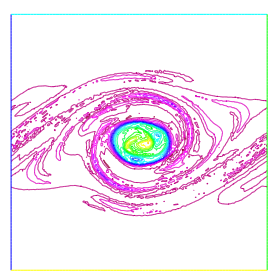}
	\includegraphics[width=0.3\linewidth, angle=0]{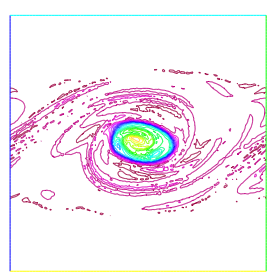}}\\
\subfigure[ CN-split, $k=2$, $80 \times
80$ mesh]{\includegraphics[width=0.29\linewidth, angle=0]{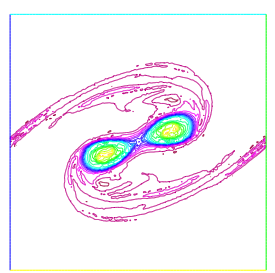}
	\includegraphics[width=0.3\linewidth, angle=0]{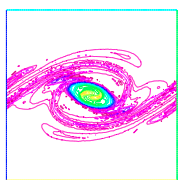}
	\includegraphics[width=0.3\linewidth, angle=0]{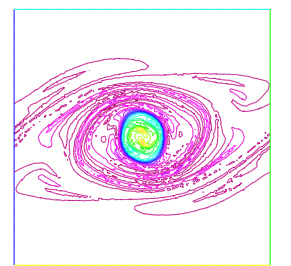}}\\
\subfigure[CN-monolithic,  $k=2$, $80 \times 80$ mesh]{\includegraphics[width=0.3\linewidth]{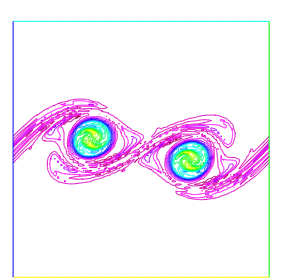}
	\includegraphics[width=0.29\linewidth]{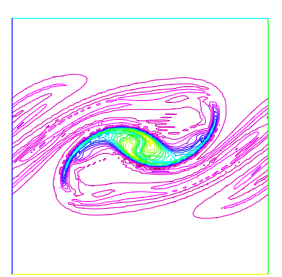}
	\includegraphics[width=0.3\linewidth]{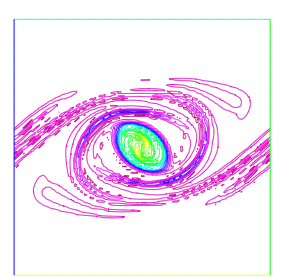}}
\caption{Comparison of  CN-imex (a), CN-split
	(b) ;  CN-monolith (c); time levels, from left to right, $t=80, 120, 140$;
	computational mesh: $80 \times 80$; piecewise quadratic approximation. 
}\label{fig:KH_split_vs_monol}
\end{figure}

\subsection{Splitting scheme with known solution $Re<1$}
Consider the Navier--Stokes equations with given exact solution.
\begin{align} \label{exact_sol_ex2}
	u_1^{ex}(x,y,t):&=2\cos(t)\sin(\pi x)\sin(\pi x)y(1-y)(1-2y),  \nonumber\\
	u_2^{ex}(x,y,t):&=(-\pi)(\cos(t))\sin(2\pi x)y^2(1-y)^2, \nonumber\\
	p^{ex}(x,y,t):&=\sin(\pi x)\cos(\pi y)\cos(t).
\end{align}
Consider the model problem \cite[eq. 1.1--1.4]{BGG24} with coefficients  $\mu= 0.1$ in the domain $\Omega=[0,1]\times [0,1]$. 
The right-hand side function $f$ and the Dirichlet boundary conditions are chosen according to the exact solution (\ref{exact_sol_ex2}). 
The numerical simulations are performed using the Crank-Nicolson splitting scheme for the viscous flow \cite[Algorithm 2]{BGG24}.
The viscosity is chosen to correspond to a low Reynolds number. The stabilization parameter for the discrete variational formulation \cite[eq. 2.7]{BGG24} is chosen as  $\gamma_u=0.001$. 
Set $(\tau,h) \in \{(0.1 \cdot 0.2/2^i, 0.2/2^i)\}^4_{i=1}$ for $P_1$ approximations and $\tau=0.025h$, $h \in \{0.2/2^i\}^4_{i=1}$ for $P_2$ approximations. 
Next, the convergence plots for the proposed scheme are illustrated in Figure \ref{low_re}. The errors are measured with respect to  $\norm{u(T)- u_h^N}$ and $\norm{p(T)- p_h^N}$ for the final time $T=1.1$.
The values on the x-axis represent space step sizes, while the values on the y-axis represent $L^2$-errors in velocity and pressure. 
Triangle markers describe the $ L^2$-error of the velocity, and squares markers represent pressure. The slopes are depicted by solid lines markers. We can observe a convergence $\mathcal{O}(h^{3})$ in $ {L}^{2}$-norm for the velocity and pressure with $P_2$ approximation, and a convergence $\mathcal{O}(h^{2})$ in the ${L}^{2}$-errors of the velocity and pressure with $P_1$ approximation.
As per expectations, the computational results agree with the theoretical predictions.

\begin{figure}[ht!]
	\centerline{\includegraphics[width=7.0cm]{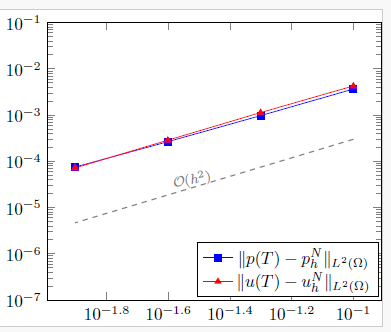}\hspace*{0cm}\includegraphics[width=7.2cm]{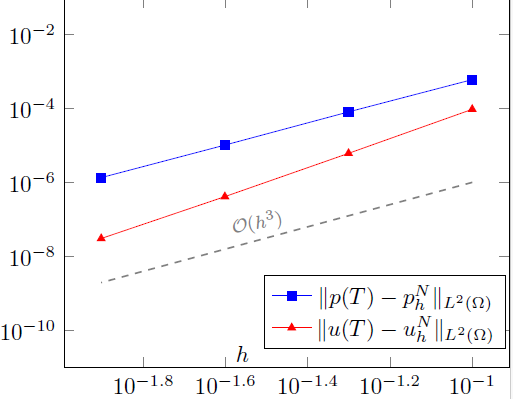}
	}
	\caption{{ Convergence plot of the Crank-Nicolson splitting scheme in the viscous flow applied to the problem defined by the exact solution (\ref{exact_sol_ex2}). Absolute-error. Left $P_1 $ Approximations.   Right $P_2 $ Approximations.  } } \label{low_re}
\end{figure}

\subsection{Navier--Stokes' flow around a cylinder at $Re=20$}

\begin{figure}[h!]
	\centering
	\includegraphics[width=1.0\linewidth]{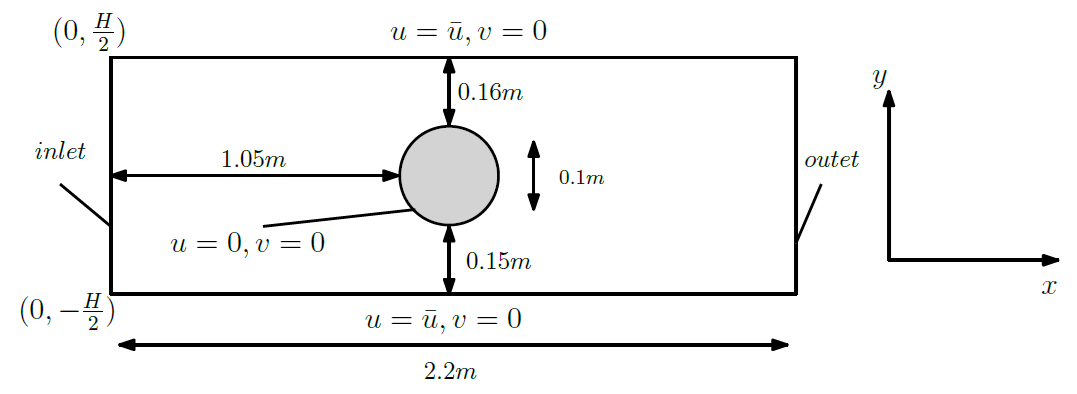}
	\caption{Geometry of the flow past cylinder domain with boundary conditions}\label{cyl_domain}
\end{figure}

\begin{figure}[h!]
	\centering
	\includegraphics[width=0.7\linewidth]{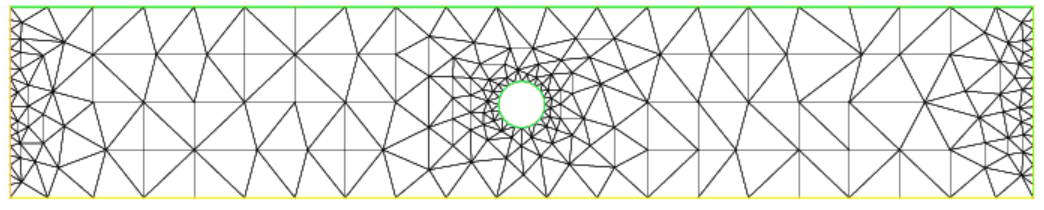}
	\caption{Cylinder initial mesh}\label{cyl_mesh}
\end{figure}
Here, we consider a modified version of the benchmark of flow around a cylinder at Reynolds number 20 proposed in \cite{schafer1996benchmark}. 
The geometry of the cylinder is presented in Figure \ref{cyl_domain}. Some definitions are introduced to specify the values which have to be computed. $H=0.41 m$ is the channel height $D=0.1 m$ is the cylinder diameter. The Reynolds number is defined as $Re= \frac{\bar{u} D}{\mu}$ with  $\bar u =0.2$.  The coefficients $\mu$ is chosen as $ 10^{-3}$.
The initial mesh is constructed with 20 elements on the cylinder, 20
elements on the horizontal boundaries and 20 elements on the vertical boundaries see \ref{cyl_mesh}. The boundary conditions are periodic on the inlet and outlet of the channel and $u=\bar{u}$ and $v=0$  velocities are set on the top and the bottom of the channel. On the cylinder homogeneous Dirichlet boujndary conditions are set. The
finer meshes are obtained by multiplying the number of elements on each boundary by two, four 
and eight, respectively.  To assess the quality of the cylinder computation, we compute the monolithic solution in parallel and compare the two solutions in $L^2$-norm and $H^1$-norm; since we know that the monolithic solution has optimal convergence, this comparison gives us directly the convergence order of the split scheme. The stabilization parameter for the discrete variational formulation \cite[eq. 3.2]{BGG24} is chosen as  $\gamma_u=0.001$. A stabilization parameter was added to the pressure stabilization \cite[eq. 2.7]{BGG24} in the monolith scheme and set to $\gamma_p = 0.001$ in the numerical examples.
Let $\tau= Co\, h \;(\beta_{\infty} = 1)$, $Co$ is the hyperbolic CFL.
Set $(\tau,h) \in \{(0.1 \cdot 0.1/2^i, 0.1/2^i)\}^4_{i=1}$ for $P_1$ approximations and $\tau=0.01h$, $h \in \{0.1/2^i\}^4_{i=1}$ for $P_2$ approximations. 
Next, the convergence plots for the scheme \cite[Algorithm 2]{BGG24}  are illustrated in Figure \ref{cong_cyl}.
The values on the x-axis represent space step sizes, while the values on the y-axis represent errors in velocity.  Triangle markers describe the $L^2$-error, and Circle markers represent the $H^1$-error of the velocity. The slopes are depicted by solid lines markers.  The errors are measured at final time $T=1$.
We observe optimal convergence for the velocities using the piecewise affine approximation, i.e. $O(h)$ in the $H^1$-norm and $O(h^2)$ in the $L^2$-norm.
In the case of piecewise quadratic approximation, the convergences are suboptimal with $O(h^{\frac{1}{2}})$. This reduction in convergence rate is likely due to the fact that, as discussed in \cite{BGG23}, the splitting scheme introduces implicitly a splitting error which matches the size of the stabilizing term in the high Reynolds regime. To keep optimal convergence in the low Reynolds regime the stabilization of the pressure should be relaxed by an order that matches the missing $h^{\frac{1}{2}}$, but that is impossible here since the stabilization term is never constructed. Another source of inconsistency incompatible with optimal convergence using quadratic elements is the piecewise affine approximation of the curved cylinder boundary.

\begin{figure}[ht!]
	\centerline{\includegraphics[width=7.0cm]{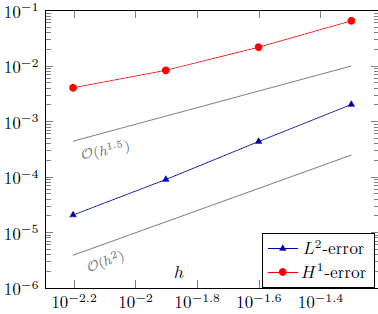}\hspace*{0cm}\includegraphics[width=7.0cm]{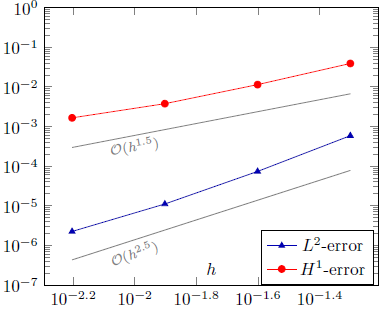}
	}
	\caption{{Absolute-error. Left $P_1 $ approximation.   Right $P_2 $ approximation. Convergence results for the splitting scheme. } } \label{cong_cyl}
\end{figure}

\section*{Acknowledgement}
The first and second authors acknowledge funding from EPSRC grant EP/T033126/1.

\bibliographystyle{plain}
\bibliography{references}

\begin{thebibliography}{10}

\bibitem{BB06}
M.~Braack and E.~Burman.
\newblock Local projection stabilization for the {O}seen problem and its
  interpretation as a variational multiscale method.
\newblock {\em SIAM J. Numer. Anal.}, 43(6):2544--2566, 2006.

\bibitem{BE07}
Erik Burman and Alexandre Ern.
\newblock Continuous interior penalty {$hp$}-finite element methods for
  advection and advection-diffusion equations.
\newblock {\em Math. Comp.}, 76(259):1119--1140, 2007.

\bibitem{BEF17}
Erik Burman, Alexandre Ern, and Miguel~A. Fern\'{a}ndez.
\newblock Fractional-step methods and finite elements with symmetric
  stabilization for the transient {O}seen problem.
\newblock {\em ESAIM Math. Model. Numer. Anal.}, 51(2):487--507, 2017.

\bibitem{BFH006}
Erik Burman, Miguel~A. Fern\'{a}ndez, and Peter Hansbo.
\newblock Continuous interior penalty finite element method for {O}seen's
  equations.
\newblock {\em SIAM J. Numer. Anal.}, 44(3):1248--1274, 2006.

\bibitem{BGG24}
Erik Burman, Deepika Garg, and Johnny Guzman.
\newblock The supplementary material for the paper \\ "{I}mplicit-{E}xplicit
  {C}rank-{N}icolson scheme for {O}seen's equation at high {R}eynolds number".
\newblock {\em submitted}.

\bibitem{BGG23}
Erik Burman, Deepika Garg, and Johnny Guzman.
\newblock Implicit-{E}xplicit {T}ime {D}iscretization for {O}seen's {E}quation
  at {H}igh {R}eynolds {N}umber with {A}pplication to {F}ractional {S}tep
  {M}ethods.
\newblock {\em SIAM J. Numer. Anal.}, 61(6):2859--2886, 2023.

\bibitem{BG22}
Erik Burman and Johnny Guzm\'{a}n.
\newblock Implicit-explicit multistep formulations for finite element
  discretisations using continuous interior penalty.
\newblock {\em ESAIM Math. Model. Numer. Anal.}, 56(1):349--383, 2022.

\bibitem{BH04}
Erik Burman and Peter Hansbo.
\newblock Edge stabilization for {G}alerkin approximations of
  convection-diffusion-reaction problems.
\newblock {\em Comput. Methods Appl. Mech. Engrg.}, 193(15-16):1437--1453,
  2004.

\bibitem{CCRL17}
F.~Capuano, G.~Coppola, L.~R\'{a}ndez, and L.~de~Luca.
\newblock Explicit {R}unge-{K}utta schemes for incompressible flow with
  improved energy-conservation properties.
\newblock {\em J. Comput. Phys.}, 328:86--94, 2017.

\bibitem{Cod08}
Ramon Codina.
\newblock Analysis of a stabilized finite element approximation of the {O}seen
  equations using orthogonal subscales.
\newblock {\em Appl. Numer. Math.}, 58(3):264--283, 2008.

\bibitem{DD76}
Jim Douglas, Jr. and Todd Dupont.
\newblock Interior penalty procedures for elliptic and parabolic {G}alerkin
  methods.
\newblock In {\em Computing methods in applied sciences ({S}econd {I}nternat.
  {S}ympos., {V}ersailles, 1975)}, pages 207--216. Lecture Notes in Phys., Vol.
  58. 1976.

\bibitem{ESW}
Howard~C. Elman, David~J. Silvester, and Andrew~J. Wathen.
\newblock {\em Finite elements and fast iterative solvers: with applications in
  incompressible fluid dynamics}.
\newblock Numerical Mathematics and Scientific Computation. Oxford University
  Press, Oxford, second edition, 2014.

\bibitem{FWK18}
Niklas Fehn, Wolfgang~A. Wall, and Martin Kronbichler.
\newblock Robust and efficient discontinuous {G}alerkin methods for
  under-resolved turbulent incompressible flows.
\newblock {\em J. Comput. Phys.}, 372:667--693, 2018.

\bibitem{FF92}
Leopoldo~P. Franca and S\'{e}rgio~L. Frey.
\newblock Stabilized finite element methods. {II}. {T}he incompressible
  {N}avier-{S}tokes equations.
\newblock {\em Comput. Methods Appl. Mech. Engrg.}, 99(2-3):209--233, 1992.

\bibitem{he2007stability}
Yinnian He and Weiwei Sun.
\newblock Stability and convergence of the
  {C}rank--{N}icolson/{A}dams--{B}ashforth scheme for the time-dependent
  {N}avier--{S}tokes equations.
\newblock {\em SIAM Journal on Numerical Analysis}, 45(2):837--869, 2007.

\bibitem{johnston2004accurate}
Hans Johnston and Jian-Guo Liu.
\newblock Accurate, stable and efficient {N}avier--{S}tokes solvers based on
  explicit treatment of the pressure term.
\newblock {\em Journal of Computational Physics}, 199(1):221--259, 2004.

\bibitem{KMR14}
Songul Kaya, Carolina~C. Manica, and Leo~G. Rebholz.
\newblock On {C}rank-{N}icolson {A}dams-{B}ashforth timestepping for
  approximate deconvolution models in two dimensions.
\newblock {\em Appl. Math. Comput.}, 246:23--38, 2014.

\bibitem{kim1985application}
John Kim and Parviz Moin.
\newblock Application of a fractional-step method to incompressible
  {N}avier-{S}tokes equations.
\newblock {\em Journal of computational physics}, 59(2):308--323, 1985.

\bibitem{LMS23}
Philip~L. Lederer, Xaver Mooslechner, and Joachim Sch\"{o}berl.
\newblock High-order projection-based upwind method for implicit large eddy
  simulation.
\newblock {\em J. Comput. Phys.}, 493:Paper No. 112492, 2023.

\bibitem{LSLC88}
M.~Lesieur, C.~Staquet, P.~Le~Roy, and P.~Comte.
\newblock The mixing layer and its coherence examined from the point of view of
  two-dimensional turbulence.
\newblock {\em J. Fluid Mech.}, 192:511--534, 1988.

\bibitem{marion1998navier}
Martine Marion and Roger Temam.
\newblock {N}avier-{S}tokes equations: Theory and approximation.
\newblock {\em Handbook of numerical analysis}, 6:503--689, 1998.

\bibitem{Minion:2018:Saye}
M.~L. Minion and R.~I. Saye.
\newblock Higher-order temporal integration for the incompressible
  {N}avier-{S}tokes equations in bounded domains.
\newblock {\em J. Comput. Phys.}, 375:797--822, 2018.

\bibitem{MRTT16}
Monica Morales~Hernandez, Leo~G. Rebholz, Cristina Tone, and Florentina Tone.
\newblock Stability of the {C}rank-{N}icolson-{A}dams-{B}ashforth scheme for
  the 2{D} {L}eray-{A}lpha model.
\newblock {\em Numer. Methods Partial Differential Equations},
  32(4):1155--1183, 2016.

\bibitem{MCBS22}
Rodrigo~C. Moura, Andrea Cassinelli, Andr\'{e} F.~C. da~Silva, Erik Burman, and
  Spencer~J. Sherwin.
\newblock Gradient jump penalty stabilisation of spectral/{$hp$} element
  discretisation for under-resolved turbulence simulations.
\newblock {\em Comput. Methods Appl. Mech. Engrg.}, 388:Paper No. 114200, 29,
  2022.

\bibitem{schafer1996benchmark}
Michael Sch{\"a}fer, Stefan Turek, Franz Durst, Egon Krause, and Rolf
  Rannacher.
\newblock {\em Benchmark computations of laminar flow around a cylinder}.
\newblock Springer, 1996.

\bibitem{Thom97}
Vidar Thom\'{e}e.
\newblock {\em Galerkin finite element methods for parabolic problems},
  volume~25 of {\em Springer Series in Computational Mathematics}.
\newblock Springer-Verlag, Berlin, 1997.

\bibitem{tone2004error}
Florentina Tone.
\newblock Error analysis for a second order scheme for the {N}avier--{S}tokes
  equations.
\newblock {\em Applied numerical mathematics}, 50(1):93--119, 2004.

\bibitem{ZJH18}
Tong Zhang, JiaoJiao Jin, and YuGao HuangFu.
\newblock The {C}rank-{N}icolson/{A}dams-{B}ashforth scheme for the {B}urgers
  equation with {$H^2$} and {$H^1$} initial data.
\newblock {\em Appl. Numer. Math.}, 125:103--142, 2018.

\end{thebibliography}

	

\end{document}